\documentclass[12pt,reqno,twoside]{article}
\usepackage[left=1 in,top=1 in,right=1 in,bottom=1 in]{geometry}
\usepackage{amssymb,amsfonts,amsthm,amsmath}
\usepackage{color}
\usepackage{hyperref}
\usepackage{cite}
\usepackage{enumitem}

\numberwithin{equation}{section}

\newcommand{\ddt}{\frac{d}{dt}}
\newcommand{\R}{\mathbb R}
\newcommand{\N}{\mathbb N}

\def\eqdef{\stackrel{\rm def}{=}}

\def\beq{\begin{equation}}
\def\eeq{\end{equation}}

\def\beqs{\begin{equation*}}
\def\eeqs{\end{equation*}}

\newtheorem{theorem}{Theorem}[section]
\newtheorem{lemma}[theorem]{Lemma}
\newtheorem{proposition}[theorem]{Proposition}
\newtheorem{corollary}[theorem]{Corollary}

\theoremstyle{definition}
\newtheorem{remark}[theorem]{Remark}

\newtheorem{definition}[theorem]{Definition}

\newtheorem*{xnotation}{Notation}

\definecolor{darkred}{rgb}{.70,.12,.20}

\definecolor{darkgreen}{rgb}{.20,.52,.14}

\def\cprime{$'$}

\newcommand{\supp}{\operatorname{supp}}
\newcommand{\varep}{\varepsilon}

\newcommand{\eR}{\vec k}
\newcommand{\Jm}{{\mathbf J}}
\newcommand{\Ncal}{{\mathcal N}}
\newcommand{\Ecal}{{\mathcal E}}
\newcommand{\hh}{\Phi}
\newcommand{\KK}{\mathcal K}
\newcommand{\ZZ}{W}
\newcommand{\MZ}{M_{\mathcal Z}}
\newcommand{\muz}{\mu_{\mathcal Z}}

\newcommand{\inttx}[3]{\int_{#1}^{#2}\!\!\!\int_{#3}}
\newcommand{\intTU}{\inttx{0}{T}{U}}

\newcommand{\intTUk}{\inttx{0}{T}{U_k}}
\newcommand{\intTV}{\inttx{0}{T}{V}}
\newcommand{\intTUp}{\inttx{0}{T}{U'}}

\numberwithin{equation}{section}

\title{\textbf{Slightly Compressible Forchheimer Flows in Rotating Porous Media}}
\author{Emine Celik$^a$, Luan Hoang$^b$,  and Thinh Kieu$^c$}

\date{\today}

\begin{document}

\maketitle

\begin{center}
\textit{$^a$Department of Mathematics, Sakarya University\\
54050, Sakarya, Turkey}\\
\textit{$^b$Department of Mathematics and Statistics, Texas Tech University\\
Box 41042, Lubbock, TX 79409--1042, U. S. A.} \\
\textit{$^c$Department of Mathematics, University of North Georgia, Gainesville Campus\\
3820 Mundy Mill Rd., Oakwood, GA 30566, U. S. A.}\\
Email addresses: \texttt{eminecelik@sakarya.edu.tr, luan.hoang@ttu.edu, thinh.kieu@ung.edu}
\end{center}

\begin{abstract}
We formulate the the generalized Forchheimer equations for the three-dimensional fluid flows in rotating porous media. 
By implicitly solving the momentum in terms of the pressure's gradient, we derive a degenerate parabolic equation for the density in the case of  slightly compressible fluids and study its corresponding initial, boundary value problem.
We investigate the nonlinear structure of the parabolic equation.
The maximum principle is proved and used to obtain the maximum estimates for the solution.
Various estimates are established for the solution's gradient, in the Lebesgue norms of any order, in terms of the initial and boundary data. All estimates contain explicit dependence on key physical parameters including the angular speed.
\end{abstract}

\textit{Keywords:} porous media, compressible fluids, non-Darcy, Forchheimer, rotating fluids, a priori estimates, maximum principle, gradient estimates.

\textit{2020 Mathematics Subject Classification:} 
76S05,   	
76U60,   	
86A05,   	
35K20,  	
35K65.   	

\tableofcontents

\pagestyle{myheadings}\markboth{E. Celik, L. Hoang,  and T. Kieu}
{Forchheimer flows in rotating porous media}

\section{Introduction and formulation of the problem}\label{intro}

We study slightly compressible fluid flows in rotating porous media in the three dimensional space.
The fluid has density $\rho$, velocity $v$, pressure $p$, and dynamic viscosity $\mu$. 
The porous medium has constant porosity $\tilde \phi\in(0,1)$ and permeability $k$.
It is rotated with a constant angular velocity $\vec\Omega$, and an associated rotating frame is given. 
In this rotating coordinate system, $\vec\Omega$ is written as $\tilde\Omega \eR$, where  $\tilde\Omega\ge 0$ is the constant angular speed, and $\eR$ is a constant unit vector. Let $x$ be the coordinate vector of a position in this rotating frame.

The equation for fluid flows written in the rotating frame, see e.g. \cite{VadaszBook}, is
\beq\label{Drot}
\frac{\mu}{k}v+ \frac{2\rho \tilde\Omega }{\tilde\phi} \eR\times v + \rho \tilde\Omega^2 \eR\times (\eR\times x)=-\nabla p+\rho \vec g,
\eeq
where 
 $(2\rho\tilde\Omega/\tilde\phi) \eR\times v$ represents the Coriolis effect in porous media,
$\tilde\Omega^2 \eR\times (\eR\times x)$ is the centripetal acceleration,
and $\vec g$ is the gravitational acceleration. 

The basic assumption for equation \eqref{Drot} is that the flows obey the Darcy's law
\beq\label{Deq}
\frac{\mu}{k}v=-\nabla p+\rho \vec g.
\eeq

However, in many situations, for instance, when the Reynolds number is large, this assumption is invalid. Instead, Forchheimer equations \cite{Forchh1901,Forchheimerbook} are usually used to model the flows in these cases. For example, the two-term Forchheimer's law states that
\beq\label{Forch2law}
av+b|v|v=-\nabla +\rho \vec g,
\eeq
where  $a, b>0$ are some physical parameters.
(See also Forchheimer's three-term and power laws in, e.g., \cite{MuskatBook,BearBook,NieldBook}.)

A general form of the Forchheimer equations, which extends \eqref{Deq} and \eqref{Forch2law} taking into account Muskat's dimension analysis \cite{MuskatBook}, is
\beq\label{Fnr}
 \sum_{i=0}^N a_i \rho^{\alpha_i} |v|^{\alpha_i} v=-\nabla p +\rho \vec g.
\eeq
Here we focus on the explicit dependence on the density, leaving the dependence on the dynamic viscosity and permeability encoded in the coefficients $a_i$'s. 

The interested reader is referred to the books \cite{BearBook,NieldBook,StraughanBook} for more information about the Forchheimer equations and a larger family of Brinkman-Darcy-Forchheimer equations. For their mathematical analysis in the case of incompressible fluids, see e.g. \cite{Zabensky2015a,ChadamQin,Payne1999a,Payne1999b,MTT2016,CKU2006,KR2017} and references therein. For the treatments of compressible fluids, see \cite{ABHI1,HI1,HI2,HIKS1,HKP1,HK1,HK2,CHK1,CHK2,CH1,CH2}. It is noted that the Forchheimer flows have drawn much less attention of mathematical research compared to the Darcy flows, and among the papers devoted to them, the number of those on compressible fluids is much smaller than that on the incompressible one. 

Then the  equation for the rotating flows corresponding to \eqref{Fnr}, written in the rotating frame, is
 \beq\label{FM}
 \sum_{i=0}^N a_i \rho^{\alpha_i} |v|^{\alpha_i} v+ \frac{2\rho \tilde\Omega}{\tilde\phi} \eR\times v+\rho \tilde\Omega^2 \eR\times (\eR\times x)=-\nabla p +\rho \vec g.
 \eeq

In particular, when $N=1$, the specific Forchheimer's two-term law for rotating fluids \cite{Ward64,VadaszBook} is 
 \beqs
\frac{\mu}{k} v+ \frac{c_F \rho}{\sqrt{k}}|v|v + \frac{2\rho \tilde\Omega}{\tilde\phi} \eR\times v+\rho \tilde\Omega^2 \eR\times (\eR\times x)=-\nabla p +\rho \vec g,
 \eeqs
where $c_F$ is the Forchheimer constant.
Even in this case, there is no mathematical analysis for compressible fluids in literature.

We make one simplification  in \eqref{FM}: replacing $\displaystyle \frac{2\rho \tilde\Omega}{\tilde\phi} \eR\times v$  with  
\beq\label{sim}
 \frac{2\rho_* \tilde\Omega}{\tilde\phi} \eR\times v, \quad \rho_*=const.\ge 0.
\eeq

We then approximate equation \eqref{FM} by  
 \beq\label{FMR}
 \sum_{i=0}^N a_i \rho^{\alpha_i} |v|^{\alpha_i} v+ \mathcal R \eR\times v+\rho \tilde\Omega^2 \eR\times (\eR\times x)=-\nabla p +\rho \vec g,
 \eeq
where
\beq\label{Rdef}
\mathcal R=\frac{2\rho_* \tilde\Omega}{\tilde\phi}=const.\ge 0.
\eeq

Let $g:\mathbb{R}^+\rightarrow\mathbb{R}^+$ be a generalized polynomial  defined by
\beq\label{eq2}
g(s)=a_0 + a_1s^{\alpha_1}+\cdots +a_Ns^{\alpha_N}=\sum_{i=0}^N a_i s^{\alpha_i}\quad\text{for } s\ge 0,
\eeq 
where $N\ge 1$ is an integer, the powers $\alpha_0=0<\alpha_1<\alpha_2<\ldots<\alpha_N$ are real numbers, 
and the coefficients $a_0,a_1,\ldots,a_N$ are positive constants.

Then equation \eqref{FMR} can be rewritten as 
\beq\label{FM1}
 g(\rho|v|) v+ \mathcal R \eR\times v=-\nabla p +\rho \vec g - \rho \tilde\Omega^2 \eR\times (\eR\times x).
\eeq

Multiplying both sides of  \eqref{FM1}  by $\rho$ gives
 \beq\label{neweq1}
 g(|\rho v|) \rho v + \mathcal R \eR\times (\rho v)  =-\rho\nabla p+ \rho^2 \vec g- \rho^2 \tilde\Omega^2 \eR\times (\eR\times x).
 \eeq 

We will solve  for $\rho v$ from \eqref{neweq1}, which is possible thanks to the following lemma.

\begin{lemma}\label{Fsolve}
Given any vector $k\in\R^3$, the function $F_0(v)\eqdef g(|v|)v + k\times v$ is a bijection from $\R^3$ to $\R^3$.
\end{lemma}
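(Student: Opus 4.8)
The plan is to show $F_0$ is both injective and surjective. For injectivity, suppose $F_0(v)=F_0(w)$. Subtracting and taking the inner product with $v-w$ kills the cross-product terms, since $(k\times v - k\times w)\cdot(v-w) = (k\times(v-w))\cdot(v-w) = 0$. This leaves $(g(|v|)v - g(|w|)w)\cdot(v-w) = 0$. The key algebraic fact is that the map $u\mapsto g(|u|)u$ is \emph{strictly monotone} on $\R^3$: because each $s\mapsto s^{\alpha_i+1}$ is increasing and $g$ has positive leading coefficient, one shows $(g(|v|)v - g(|w|)w)\cdot(v-w) > 0$ whenever $v\neq w$ (the standard estimate, e.g. bounding the expression below by something like $\tfrac12(g(|v|)+g(|w|))|v-w|^2$, or by expanding and using $|v||w| \le \tfrac12(|v|^2+|w|^2)$ together with monotonicity of $g$). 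Hence $v=w$, so $F_0$ is injective.

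For surjectivity, the cleanest route is a topological degree / continuity argument. I would first establish coercivity: $F_0(v)\cdot v = g(|v|)|v|^2 \ge a_0|v|^2 + a_N|v|^{\alpha_N+2}$, so $|F_0(v)|\ge g(|v|)|v| \to \infty$ as $|v|\to\infty$, and in fact $F_0$ is a proper map (preimages of bounded sets are bounded). Then, since $F_0:\R^3\to\R^3$ is continuous (indeed smooth away from the origin, and continuous at $0$ since $g$ is continuous), injective, and proper, invariance of domain implies $F_0(\R^3)$ is open, while properness implies it is closed; connectedness of $\R^3$ then forces $F_0(\R^3)=\R^3$. Alternatively, one can argue directly via Brouwer degree: the homotopy $H_t(v) = g(|v|)v + t\, k\times v$ connects $F_0$ to the radial map $v\mapsto g(|v|)v$, and on a large ball $\{|v|=\rho\}$ one checks $H_t(v)\cdot v = g(|v|)|v|^2 > 0$ is bounded away from $0$ uniformly in $t\in[0,1]$, so no zero of $H_t - y$ touches the boundary for $|y|$ up to any prescribed bound; hence $\deg(F_0, B_\rho, y) = \deg(g(|\cdot|)\,\cdot\,, B_\rho, y) = 1$ for all such $y$, giving a solution of $F_0(v)=y$.

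I expect the main obstacle to be the surjectivity half, specifically making the degree (or invariance-of-domain) argument rigorous at the origin, where $F_0$ is only Lipschitz/continuous rather than $C^1$ — one must either smooth $g$ near $0$ and pass to the limit, or invoke the topological (Brouwer) degree, which only needs continuity. The monotonicity inequality for $u\mapsto g(|u|)u$ is routine but should be stated carefully as it is reused. Everything else (coercivity, properness, the homotopy bound) is a direct computation using only $a_0, a_N > 0$ and $\alpha_0 = 0 < \alpha_1 < \cdots < \alpha_N$.
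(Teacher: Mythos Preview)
Your proposal is correct and the injectivity half is essentially identical to the paper's: both dot $F_0(v)-F_0(w)$ with $v-w$, kill the cross product, and invoke strict monotonicity of $u\mapsto g(|u|)u$. The paper cites DiBenedetto's lemma for the pointwise bound $(|v|^{\alpha_i}v-|w|^{\alpha_i}w)\cdot(v-w)\ge C_i|v-w|^{\alpha_i+2}$, whereas your suggested lower bound $\tfrac12(g(|v|)+g(|w|))|v-w|^2$ follows from the identity
\[
(g(|v|)v-g(|w|)w)\cdot(v-w)=\tfrac12(g(|v|)+g(|w|))|v-w|^2+\tfrac12(g(|v|)-g(|w|))(|v|^2-|w|^2)
\]
and monotonicity of $g$; either route gives $\ge a_0|v-w|^2$.

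For surjectivity the paper is more economical: it observes $F_0(v)\cdot v/|v|=g(|v|)|v|\to\infty$ and cites the standard coercivity criterion (Deimling, Theorem~3.3) to conclude $F_0(\R^3)=\R^3$ immediately. Your invariance-of-domain route (continuous $+$ injective $\Rightarrow$ open; proper $\Rightarrow$ closed; connected $\Rightarrow$ onto) and your degree-homotopy route are both valid and in fact are the kinds of arguments underlying Deimling's theorem, so you are effectively reproving what the paper cites. Your concern about regularity at the origin is unnecessary: invariance of domain and Brouwer degree require only continuity, which $F_0$ has everywhere since $g$ is continuous on $[0,\infty)$.
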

\begin{proof}
Note that $F_0$ is a continuous function on $\R^3$ and 
$$ \frac{F_0(v)\cdot v}{|v|}=\frac{g(|v|)|v|^2}{|v|}\to\infty \text{ as }|v|\to\infty.$$
Then it is well-known that $F_0(\R^3)=\R^3$, see e.g. \cite[Theorem 3.3]{Deimling2010}.

It remains to prove that $F_0$ is one-to-one. 
Let $v, w\in \R^3$. We have
\beqs
\begin{split}
(F_0(v)-F_0(w))\cdot (v-w)
&= (g(|v|)v-g(|w|))\cdot (v-w)+ [k\times (v-w)]\cdot(v-w)\\ 
&=(g(|v|)v-g(|w|)w)\cdot (v-w) \\
&= a_0|v-w|^2+\sum_{i=1}^N a_i (|v|^{\alpha_i}v-|w|^{\alpha_i} w)\cdot (v-w).
\end{split}
\eeqs

By applying \cite[Lemma 4.4, p.~13]{DiDegenerateBook} to each 
$(|v|^{\alpha_i}v-|w|^{\alpha_i} w)\cdot (v-w)$, for $i\ge 1$,
we then obtain
\beq\label{Fmono}
(F_0(v)-F_0(w))\cdot (v-w)\ge a_0|v-w|^2+\sum_{i=1}^N C_i a_i |v - w|^{\alpha_i+2},   
\eeq
where $C_i>0$ depends on $\alpha_i$ and $a_i$, for $i=1,\ldots, N.$ 

If $F_0(v)=F_0(w)$, it follows the monotonicity \eqref{Fmono} that 
\beqs
0=(F_0(v)-F_0(w))\cdot (v-w)\ge a_0 |v - w|^2,   
\eeqs
which implies $v=w$.
\end{proof}

Let vector $\eR=(k_1,k_2,k_3)$  be fixed now with $k_1^2+k_2^2+k_3^2=1$.
We denote by $\Jm$ the $3\times 3$ matrix for which $\Jm x=\eR\times x$ for all $x\in\R^3$. Explicitly, 
we have
\beq\label{JR}
\Jm =\begin{pmatrix}   0 & -k_3 & k_2\\ k_3 & 0 & -k_1 \\ -k_2& k_1 & 0 \end{pmatrix}
\text{ and }
\Jm^2=\begin{pmatrix}
     k_1^2-1&k_1 k_2 & k_1 k_3\\
     k_1 k_2&k_2^2-1&k_2 k_3\\
     k_1 k_3& k_2 k_3&k_3^2-1
    \end{pmatrix}.
\eeq

\begin{definition}\label{FX} Throughout the paper, the function $g$ in \eqref{eq2} is fixed. We define the function $F:\R^3\to\R^3$ by
\beq\label{Fdef}
F(v)= g(|v|)v + \mathcal R  \Jm v \quad \text{ for }v\in\R^3,
\eeq
and denote its inverse function, which exists thanks to Lemma \ref{Fsolve}, by 
\beq \label{Xdef}
X=F^{-1}.
\eeq
\end{definition}

Since $F$ is odd, then so is $X$. 
Returning to equation \eqref{neweq1}, we can invert 
 \beq\label{new2}
\rho v= -X(\rho\nabla p - \rho^2 \vec g + \rho^2 \tilde\Omega^2 \Jm^2 x).
\eeq 

We recall that the fluid's compressibility for isothermal conditions is
\beqs
\kappa=-\frac{1}{V}\frac{dV}{dp}=\frac{1}{\rho}\frac{d\rho}{dp},
\eeqs
where $V$, here, denotes the fluid's volume. In many cases such as (isothermal) compressible liquids, $\kappa$ is assumed to be a constant \cite{MuskatBook,BearBook}. In particular, it is a small positive constant for (isothermal) slightly compressible fluids such as crude oil and water. This condition is commonly used in petroleum and reservoir engineering \cite{AhmedHandbook2nd,Dakebook}, where the fluid dynamics in porous media have important applications. The current paper is focused on (isothermal)  slightly compressible fluids, hence, we study the following equation of state 
   \beq\label{slight}
\frac{1}{\rho}   \frac{d\rho}{dp}=\kappa, 
\quad \text{where the constant compressibility } \kappa>0 \text { is small}.
   \eeq

Using \eqref{slight}, we write \eqref{new2} as
\beq\label{rveq}
\rho v= -X(\kappa^{-1} \nabla \rho - \rho^2 \vec g+ \rho^2 \tilde\Omega^2 \Jm^2 x).
\eeq

Consider the equation of continuity
\beq\label{eq5}
\tilde \phi\frac{\partial \rho}{\partial t} +\nabla\cdot(\rho v)=0,
\eeq
where $t$ is the time variable.
Combining \eqref{eq5} with \eqref{rveq} gives
\beq\label{eq0}
\tilde \phi\frac{\partial \rho}{\partial t}=\nabla\cdot(X(\kappa^{-1} \nabla \rho - \rho^2 \vec g+ \rho^2 \tilde \Omega^2 \Jm^2 x)).
\eeq

In the rotating frame, the gravitational field $\vec g$ becomes $\vec g(t)=\tilde{\mathcal G} e_0(t)$,
with the gravitational constant $\tilde{\mathcal G}>0$ and smooth unit vector-valued function $e_0(t)$, for $t\in\R$.

We make a simple change of variable  $u=\rho/\kappa$. Then we obtain from \eqref{eq0} the partial differential equation (PDE)
\beq\label{ueq}
\phi u_t=  \nabla \cdot\Big (X\big(\nabla u+u^2 \mathcal Z(x,t)\big)\Big),  
\eeq
where
\beq\label{Zx}
\mathcal Z(x,t)=-\mathcal G e_0(t)+ \Omega^2  \Jm^2 x,
\eeq
\beqs
\phi =\kappa\tilde \phi,\quad
\mathcal G=\kappa^2\tilde{\mathcal G},\quad \Omega=\kappa \tilde \Omega.
\eeqs

To reduce the complexity in our mathematical treatment, hereafter, we consider the involved parameters and all equations to be non-dimensional. This is allowed by using appropriate scalings.

In this paper, we study the initial and boundary value problem (IBVP) for equation \eqref{ueq}.
More specifically, let $U$ be an open, bounded  set in $\R^3$ with $C^1$ boundary $\Gamma=\partial U$.
We study the following problem  
\beq\label{ibvpg}
\begin{aligned}
\begin{cases}
\phi u_t=\nabla\cdot \Big(X\big(\nabla u+u^2 \mathcal Z(x,t)\big)\Big)\quad &\text{in}\quad U\times (0,\infty)\\
u(x,0)=u_0(x) \quad &\text{in}\quad U\\
u(x,t)= \psi(x,t)\quad &\text{in}\quad \Gamma\times (0,\infty),
\end{cases}
\end{aligned}
\eeq
where the initial data $u_0(x)$ and the Dirichlet boundary data $\psi(x,t)$ are given.

We will focus on the mathematical analysis of problem \eqref{ibvpg}. 
We obtain various estimates of the solution in terms of the initial and boundary data.
These estimates show how the solutions, in space and time, can be controlled by the initial and boundary data. 
We emphasize that the dependence on the problem's key parameters, including the angular speed of rotation, are expressed explicitly in our results.  

The paper is organized as follows. 
In section \ref{Prem}, we establish basic properties of the function $X$ which are crucial to our understanding of problem \eqref{ibvpg}.
They reveal the nonlinear structure and the degeneracy of the nonlinear parabolic equation \eqref{ueq}. Moreover, they have explicit dependence on the physical parameters, which, as stated above, is an important goal of this paper.
In section \ref{maxsec}, we prove the maximum principle for non-negative solutions of equation \eqref{ueq} in Theorem \ref{maxpr}. Using this, we derive the maximum estimates for non-negative solutions of the IBVP \eqref{ibvpg} in Corollary \ref{maxcor}.
Section \ref{Gradprep} contains the Lady\v{z}enskaja--Ural{\cprime}ceva-typed embedding, Theorem \ref{LUembed}, with the weight $K[w,Q]$ which is related to the type of degeneracy of the nonlinear PDE \eqref{ueq}. This is one of the key tools in obtaining higher integrability for the gradient later.
In section \ref{L2asub}, we establish the estimate for the $L^{2-a}_{x,t}$-norm of the gradient in Theorem \ref{L2apos}. It was done through the $\KK$-weighted $L^2$-norm first, see Proposition \ref{L2a}, and then by the interpretation of the weight $\KK$.
In section \ref{gradsec}, we estimate the $L^{s}_{x,t}$-norms of the gradient, which is interior in the spatial variables,  for any finite number $s>2-a$. Specifically, we obtain estimates for $2-a<s\le 4-a$ in subsection \ref{L4ma}, and for $s>4-a$ in subsection \ref{Lhigher}.
We use the iteration method by Lady\v{z}enskaja and Ural{\cprime}ceva \cite{LadyParaBook68}. This is a classical technique but, with suitable modifications based on the structure of equation \eqref{ueq}, applies well to our complicated nonlinear PDE. Moreover, it is sufficiently explicit to allow us to track all the necessary constants.
Section \ref{maxintime} is devoted to the estimates for the the gradient's $L^\infty_tL^s_x$-norms, which, of course, are stronger norms than those in the previous section.

It is worth mentioning that the derived estimates in this paper are already complicated, therefore, we strive to make them coherent, and hence more digestible, rather than sharp.

Concerning the simplification \eqref{sim}, it is a common strategy when encountering a new nonlinear problem. As presented above, it allows us to formulate the whole fluid dynamics system as a scalar parabolic equation \eqref{ueq}. 
Such approximation, usually with some average density $\rho_*$, makes the problem much more accessible, while still gives insights on the flows' behaviors.
More importantly, this approach prompts the way to analyze the full model. Indeed, in the general case, the $\mathcal R$ in \eqref{Rdef} becomes $\mathcal R(u)$, and the PDE \eqref{ueq} becomes
\beq\label{unext}
\phi u_t=  \nabla \cdot\Big (X\big(u,\nabla u+u^2 \mathcal Z(x,t)\big)\Big),  
\eeq
with $X(u, y)$ defined in the same way as \eqref{Xdef}. 
Therefore, we can reduce the fluid dynamics system to a scalar PDE again. Furthermore, the properties of function $X$ established in subsection \ref{Xpropsec}  with explicit dependence on $\mathcal R=\mathcal R(u)$,  and other $X$-related results in section \ref{Gradprep} will play fundamental roles in understating the structure of the PDE \eqref{unext}.
This will be pursued and reported in a sequel of this paper.

Finally, we comment that the choice of the equation of state \eqref{slight}, in addition to its meaningful applications, yields the PDE \eqref{ueq} which can be analyzed rather thoroughly. Indeed, many strong estimates will be obtained in the next sections. In spite of this focus on the slightly compressible fluids, the techniques developed in the current paper can be combined with those in our previous work \cite{CHK1,CHK2} to model and analyze other types of compressible fluid flows such as the rotating isentropic flows for gases.

\section{Preliminaries}\label{Prem}

This section contains prerequisites and basic results on function $X$.

\subsection{Notation and elementary inequalities} 
A vector $x\in\R^3$ is denoted by a $3$-tuple $(x_1,x_2,x_3)$ and considered as a column vector, i.e., a $3\times 1$ matrix. Hence $x^{\rm T}$ is the $1\times 3$ matrix $(x_1\ x_2\ x_3)$.

For a function $f=(f_1,f_2,\ldots,f_m):\R^n\to\R^m$, its derivative is the $m\times n$ matrix
\begin{align*}
f'=Df=\Big(\frac{\partial f_i}{\partial x_j}\Big)_{1\le i\le m,\,1\le j\le n}.
\end{align*}

In particular, when $n=3$ and $m=1$, i.e.,  $f:\R^3\to \R$, the derivative is 
$$f'=Df=(f_{x_1}\ f_{x_2}\ f_{x_3}),$$ 
while its gradient vector is  
$$\nabla f=(f_{x_1},f_{x_2},f_{x_3})=(Df)^{\rm T}.$$
The Hessian matrix is $$D^2 f\eqdef D(\nabla f)=  \Big(\frac{\partial^2 f}{\partial x_j \partial x_i} \Big)_{i,j=1,2,3}.$$

Interpolation inequality for integrals:  
\beq\label{Lpinter}
\int |f|^sd\mu\le \Big(\int |f|^p d\mu\Big)^\frac{q-s}{q-p}\Big(\int |f|^q d\mu\Big)^\frac{s-p}{q-p}\text{ for }0<p<s<q.
\eeq

\medskip
For two vectors $x,y\in \R^3$, their dot product is $x\cdot y=x^{\rm T}y=y^{\rm T}x$, while $xy^{\rm T}$ is the $3\times 3$ matrix $(x_iy_j)_{i,j=1,2,3}$.

Let $\mathbf A=(a_{ij})$ and $\mathbf B=(b_{ij})$ be any $3\times 3$ matrices of real numbers. Their inner product is
\beqs 
\mathbf A:\mathbf B\eqdef {\rm trace}(\mathbf A\mathbf B^{\rm T})=\sum_{i,j=1}^3 a_{ij}b_{ij}.
\eeqs

The Euclidean norm of the matrix $\mathbf A$ is $$|\mathbf A|=(\mathbf A:\mathbf A)^{1/2}=\Big(\sum_{i,j=1}^3 a_{ij}^2\Big)^{1/2}.$$ 
(Note that we do not use $|\mathbf A|$ to denote the determinant in this paper.)

When $\mathbf A$ is considered as a linear operator, another norm is defined by
\beq\label{opnorm}
\|\mathbf A\|_{\rm op}=\max\left\{ \frac{|\mathbf Ax|}{|x|}:x\in\R^3,x\ne 0\right\} = \max\{ |\mathbf Ax|:x\in\R^3,|x|=1\}.
\eeq

We have the following inequalities
\begin{align}
\label{mm0}
|\mathbf A y|&\le |\mathbf A|\cdot |y| \quad\text{ for any  }y\in\R^3,\\
\label{mop}
|\mathbf A y|&\le \|\mathbf A\|_{\rm op}\cdot |y|\quad  \text{ for any  }y\in\R^3,\\
\label{mmi}
|\mathbf A \mathbf B|&\le |\mathbf A|\cdot |\mathbf B|.
\end{align}

It is also known that
\beq\label{nnorms}
\|\mathbf A\|_{\rm op}\le |\mathbf A|\le c_*\|\mathbf A\|_{\rm op},
\eeq
where $c_*$ is a positive constant independent of $\mathbf A$.

In particular, for matrix $\Jm$, we observe, for any $x\in\R^3$, that 
\beq\label{Jxineq}
|\Jm x|\le |\eR|\,|x|=|x|\quad\text{ and }\quad |\Jm^2x|\le |\Jm x|\le |x|.
\eeq
By choosing $x\ne 0$ perpendicular to $\eR$, we conclude, for the norm \eqref{opnorm}, that 
$$\|\Jm\|_{\rm op}=\|\Jm^2\|_{\rm op}=1.$$
For the Euclidean norm,  we have, from explicit formulas in \eqref{JR}, that
\beq\label{Jnorm}
|\Jm|^2=2|\eR|^2=2,
\eeq
\beq\label{J2norm}
|\Jm^2|^2=3-2|\eR|^2+|\eR|^4=2.
\eeq

\medskip
We recall below some more elementary inequalities that will be used in this paper. First,
\beq\label{ee3}
 \frac{x^p+y^p}2\le (x+y)^p\le 2^{(p-1)^+}(x^p+y^p)\quad  \text{for all } x,y\ge 0,\  p>0,
\eeq
where $z^+=\max\{z, 0\}$ for any $z\in\R$.
Particularly,
\beq\label{ee2}
(x+y)^p\le 2^p(x^p+y^p)\quad  \text{for all } x,y\ge 0,\ p>0.
\eeq
Second, 
\beq\label{ee4}
x^\beta \le x^\alpha+x^\gamma\quad \text{for all } x,y\ge 0,\ 0\le \alpha\le \beta\le\gamma,
\eeq
\beq\label{ee5}
x^\beta \le 1+x^\gamma \quad \text{for all } x,y\ge 0,\  0\le \beta\le\gamma.
\eeq
Above and throughout the paper, we conveniently use $0^0=1$.

By the triangle inequality and the second inequality of \eqref{ee3}, we have
\beq\label{ee6}
|x-y|^p\ge 2^{-(p-1)^+}|x|^p-|y|^p \quad \text{for all } x,y\in\R^n,\  p>0.
\eeq

\subsection{Properties of the function $X$}\label{Xpropsec}
It is obvious that the structure of the parabolic equation \eqref{ueq} depends greatly on the properties of the function $X$. 
Thus, this subsection is devoted to studying $X$. 

Recall that the functions $F$ and $X$ are defined in Definition \ref{FX}. 
Throughout the paper, we denote 
\beq \label{adef}
a=\frac{\alpha_N}{1+\alpha_N}\in(0,1),
\eeq 
\beq\label{defxione}
\chi_0=g(1)=\sum_{i=0}^N a_i\text{ and }\chi_1=g(1)+\mathcal R=\chi_0+\mathcal R.
\eeq

\begin{lemma}\label{lem21}

{\rm (i)} One has 
\beq\label{X0}
\frac{c_1\chi_1^{-1}|y|}{(1+|y|)^a}\le |X(y)|\le \frac{c_2\chi_1^a|y|}{(1+|y|)^a} \text{ for all }y\in \mathbb R^3,
\eeq
where 
$c_1=\min\{1,\chi_0\}^a $ and $c_2=2^a c_1^{-1}\min\{a_0,a_N\}^{-1}$.
Alternatively,
\beq\label{X1}
 \chi_1^{-(1-a)}|y|^{1-a}-1\le |X(y)|\le c_3|y|^{1-a} \text{ for all }y\in \mathbb R^3,
 \eeq
where $c_3=(a_N)^{a-1}$.

{\rm (ii)} One has
\beq\label{X2}
\frac{c_4 \chi_1^{-2} |y|^2}{(1+|y|)^a}  \le X(y)\cdot y\le \frac{c_2 \chi_1^a |y|^2}{(1+|y|)^a} \text{ for all }y\in \mathbb R^3,
\eeq
where $c_4=(\min\{1,a_0,a_N\}/2^{\alpha_N})^{1+a}$.
Alternatively,
\beq\label{X3}
c_5\chi_1^{-2} (|y|^{2-a}-1)  \le X(y)\cdot y\le c_3|y|^{2-a}\text{ for all }y\in \mathbb R^3,
\eeq
where $c_5=2^{-a}c_4$.
\end{lemma}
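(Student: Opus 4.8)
The plan is to derive all four inequalities in Lemma \ref{lem21} from the definition $F(v) = g(|v|)v + \mathcal R \Jm v$ together with the monotonicity estimate \eqref{Fmono} and the growth of $g$. The key observation is that since $\Jm v \perp v$ (because $\Jm v = \eR \times v$), the cross-product term contributes nothing to $F(v)\cdot v$; hence $F(v)\cdot v = g(|v|)|v|^2$, which is the algebraic engine for part (ii). For part (i), I first need two-sided bounds on $|F(v)|$ in terms of $|v|$; here the upper bound uses $|\Jm v| \le |v|$ from \eqref{Jxineq} and $g(|v|) \le \chi_0(1+|v|)^{\alpha_N}$-type estimates via \eqref{ee4}/\eqref{ee5}, giving $|F(v)| \le \chi_1 |v|(1+|v|)^{\alpha_N}$ up to constants, while the lower bound follows from $|F(v)| \ge |F(v)\cdot v|/|v| = g(|v|)|v| \ge a_0|v|$ and also $g(|v|)|v| \gtrsim a_N |v|^{1+\alpha_N}$, combined into a single bound of the form $|F(v)| \gtrsim \min\{1,\chi_0\}\, |v|(1+|v|)^{\alpha_N}/\text{(something)}$. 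The precise bookkeeping of the constants $c_1,\dots,c_5$ is the bulk of the work but is routine.

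Next I would invert these. Writing $y = F(v)$, so $v = X(y)$, the relation $|y| \asymp |v|(1+|v|)^{\alpha_N}$ must be inverted to express $|v| = |X(y)|$ in terms of $|y|$. The function $s \mapsto s(1+s)^{\alpha_N}$ is increasing and behaves like $s$ for small $s$ and like $s^{1+\alpha_N}$ for large $s$, so its inverse behaves like $t$ for small $t$ and like $t^{1/(1+\alpha_N)} = t^{1-a}$ for large $t$; this is exactly the content of \eqref{X0} and \eqref{X1}, with the equivalence between the two forms coming from the elementary fact that $t/(1+t)^a$ and $t^{1-a}$ differ by at most an additive/multiplicative constant (one can check $t^{1-a} - 1 \le t/(1+t)^a \le t^{1-a}$ using \eqref{ee2} or a direct estimate). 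For part (ii), once $|X(y)|$ is pinned down, I substitute into $X(y)\cdot y = g(|X(y)|)|X(y)|^2$ and estimate $g$ from above and below at $s = |X(y)|$, again converting between the $(1+|y|)^{-a}$ form \eqref{X2} and the $|y|^{2-a}$ form \eqref{X3} via the same elementary inequalities.

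A cleaner route for the lower bounds that I would actually use: from \eqref{Fmono} with $w=0$ one gets $F(v)\cdot v \ge a_0|v|^2 + \sum C_i a_i |v|^{\alpha_i+2} \ge a_0|v|^2 + C_N a_N |v|^{\alpha_N+2}$, and since $F(v)\cdot v \le |F(v)||v|$ this yields $|F(v)| \ge a_0|v| + C_N a_N|v|^{1+\alpha_N}$, hence a lower bound on $|F(v)|$ purely in terms of $|v|$; feeding $y=F(v)$ through gives the lower bound in \eqref{X0}–\eqref{X1}. The upper bounds need the opposite direction: $|F(v)| \le g(|v|)|v| + \mathcal R|v| = (g(|v|)+\mathcal R)|v| \le \chi_1(1+|v|)^{\alpha_N}|v|$ after bounding each $s^{\alpha_i} \le (1+s)^{\alpha_N}$, which inverts to the claimed upper bound on $|X|$.

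I expect the main obstacle to be purely organizational rather than conceptual: keeping the chain of elementary inequalities \eqref{ee2}–\eqref{ee6} consistent so that the stated constants $c_1 = \min\{1,\chi_0\}^a$, $c_2 = 2^a c_1^{-1}\min\{a_0,a_N\}^{-1}$, $c_3 = a_N^{a-1}$, $c_4 = (\min\{1,a_0,a_N\}/2^{\alpha_N})^{1+a}$, $c_5 = 2^{-a}c_4$ actually come out correctly, and making sure the passage from $|v|(1+|v|)^{\alpha_N} \asymp |y|$ to $|v| \asymp |y|/(1+|y|)^a$ is justified rigorously in both regimes $|y|\le 1$ and $|y|>1$ without circularity. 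None of the individual estimates is hard, but threading the explicit constants through the inversion is the delicate part, and is precisely where the paper's emphasis on explicit dependence on the physical parameters (here $\mathcal R$, through $\chi_1$) is paid for.
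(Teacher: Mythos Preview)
Your approach is essentially the same as the paper's: both rest on the orthogonality $\Jm v \perp v$ to obtain the sandwich $g(|v|)|v| \le |y| \le (g(|v|)+\mathcal R)|v|$ and the identity $X(y)\cdot y = g(|v|)|v|^2$, followed by a case split $|v|\le 1$ versus $|v|>1$ to invert for \eqref{X0}, with \eqref{X1} being the coarser version. Two minor remarks. First, the detour through \eqref{Fmono} is unnecessary, since orthogonality already yields $|F(v)|\ge g(|v|)|v|$ directly. Second, for the lower bound in \eqref{X2} the paper does \emph{not} substitute the part-(i) bound on $|X(y)|$ into $g(|v|)|v|^2$---done naively this produces an unwanted extra factor $(1+|y|)^{-a}$; instead it uses $|v|\ge |y|/(g(|v|)+\mathcal R)$ to write $g(|v|)|v|^2 \ge g(|v|)|y|^2/(g(|v|)+\mathcal R)^2$ and then bounds the single ratio $g(|v|)/(g(|v|)+\mathcal R)^2$ below by $c_4\chi_1^{-2}(1+|y|)^{-a}$ via $g(s)\ge \min\{a_0,a_N\}2^{-\alpha_N}(1+s)^{\alpha_N}$, $g(s)+\mathcal R\le \chi_1(1+s)^{\alpha_N}$, and $(1+|v|)^{\alpha_N}\le (2^{\alpha_N}/\min\{1,a_N\})^a(1+|y|)^a$. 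This small reorganization is what makes the stated constant $c_4$ fall out cleanly.
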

\begin{proof}
Let $y\in\R^3$ and $v=X(y)$. Then, by \eqref{Xdef}, 
\beq\label{Fvy} F(v)=y.\eeq 

(i) Since $g(|v|)v$ and $\eR\times v$ are orthogonal, we have from \eqref{Fdef} and \eqref{Fvy} that
\beqs
 |y|^2 = g(|v|)^2|v|^2+\mathcal R^2|\Jm v|^2.
\eeqs
This and \eqref{Jxineq} show that
\beqs
g(|v|)^2|v|^2\le |y|^2\le (g(|v|)^2+\mathcal R^2)|v|^2.
\eeqs
Thus,
\beq\label{gf1}
g(|v|)|v|\le |y|\le (g(|v|)+\mathcal R)|v|.
\eeq

\textit{Proof of \eqref{X1}.}
From the first inequality in \eqref{gf1},
\beq\label{vf}
|y| \ge g(|v|)|v|\ge a_N |v|^{\alpha_N+1}, \text{ which implies }|v|\le (a_N^{-1} |y|)^\frac{1}{\alpha_N+1}=c_3|y|^{1-a}.
\eeq
So we obtain the second inequality of \eqref{X1}.
From \eqref{gf1},
\beq\label{fov}
|y|\le \chi_1(1+|v|)^{\alpha_N+1}.
\eeq
Then we obtain the first inequality in \eqref{X1}.

\textit{Proof of \eqref{X0}.}
Since $X(0)=0$, we consider only $y,v\ne 0$.

\textit{Case $|v|>1$.} By \eqref{gf1}, $|y|> g(1)\cdot 1=\chi_0$. 
Furthermore, by \eqref{vf},
\beq\label{v1}
|v|\le (a_N^{-1}|y|)^{1-a}=\frac{2^a a_N^{a-1}|y|}{(|y|+|y|)^a}\le \frac{2^a \chi_1^a a_N^{-1}|y|}{(|y|+\chi_0)^a}\le \frac{2^a a_N^{-1}\chi_1^a|y|}{(\min\{1,\chi_0\})^a(|y|+1)^a}.
\eeq

On the other hand, we have from \eqref{gf1} that $|y|\le (g(1)|v|^{\alpha_N}+\mathcal R)|v|\le \chi_1 |v|^{\alpha_N+1}$. Then
\beq\label{v2}
|v|\ge (\chi_1^{-1}|y|)^{1-a}\ge \frac{\chi_1^{a-1}|y|}{|y|^a}\ge \frac{\chi_0^a \chi_1^{-1}|y|}{(|y|+1)^a}.
\eeq

\textit{Case $0<|v|\le 1$.} It follows \eqref{gf1} that
$a_0 |v|\le  |y|\le \chi_1 |v|\le \chi_1$.
Thus,
\begin{align}\label{v3}
|v|&\le a_0^{-1} |y|=\frac{a_0^{-1}|y|(|y|+\chi_0)^a}{(|y|+\chi_0)^a}\le \frac{a_0^{-1}|y|(\chi_1+\chi_0)^a}{(|y|+\chi_0)^a}
\le \frac{ a_0^{-1}(2\chi_1)^a|y|}{(\min\{1,\chi_0\})^a(|y|+1)^a},\\
\label{v4}
|v|&\ge \chi_1^{-1} |y|\ge \chi_1^{-1} |y| \frac{1}{(|y|+1)^a}.
\end{align}

From \eqref{v1} and \eqref{v3}, we obtain the second inequality in \eqref{X0}.
From \eqref{v2} and \eqref{v4}, we obtain the first inequality in \eqref{X0}.

(ii) Note that the second inequality of \eqref{X2}, respectively \eqref{X3}, follows the Cauchy-Schwarz inequality and the second inequality of \eqref{X0}, respectively \eqref{X1}. Thus, we focus on proving the first inequalities of \eqref{X2} and \eqref{X3}.
We have from \eqref{Fdef} and \eqref{gf1} that
\beqs
X(y)\cdot y=v\cdot F(v)=g(|v|)|v|^2
\ge  \frac{g(|v|)|y|^2}{(g(|v|)+\mathcal R)^2}  .
\eeqs

We estimate
\begin{align}
\label{g2}
g(|v|)+\mathcal R
&\le \chi_1(1+|v|)^{\alpha_N},\\
\intertext{and, by using \eqref{ee2},}
\label{g1}
g(|v|)
&\ge  \min\{a_0,a_N\} (1+|v|^{\alpha_N})
\ge \min\{a_0,a_N\} 2^{-\alpha_N}(1+|v|)^{\alpha_N}.
\end{align}
Hence, 
\begin{align*}
\frac{g(|v|)}{(g(|v|)+\mathcal R)^2}
\ge   \frac{\min\{a_0,a_N\} 2^{-\alpha_N}}{\chi_1^2(1+|v|)^{\alpha_N}}.
\end{align*}
Note, by \eqref{ee3}, that we also have 
\beqs
1+|y|\ge 1+g(|v|)|v|\ge \min\{1,a_N\}(1+|v|^{\alpha_N+1})
\ge \min\{1,a_N\}2^{-\alpha_N}(1+|v|)^{\alpha_N+1}.
\eeqs
Then,
\beq\label{vy}
(1+|v|)^{\alpha_N} \le  (1+|y|)^a (2^{\alpha_N}/\min\{1,a_N\})^a.
\eeq
Therefore,
\beq\label{gg}
\frac{g(|v|)}{(g(|v|)+\mathcal R)^2}
\ge \frac{\min\{a_0,a_N\} 2^{-\alpha_N}}{\chi_1^2(1+|y|)^a}(\min\{1,a_N\}2^{-\alpha_N})^a\ge \frac{c_4 \chi_1^{-2}}{(1+|y|)^a}.
\eeq
Hence we obtain the first inequality in \eqref{X2}.
Then the first inequality of \eqref{X3} follows this by considering $|y|\le 1$ and $|y|>1$ separately.
\end{proof}

\begin{remark} 
Compared to \eqref{X1}, inequality in \eqref{X0} indicates $X(y)\to0$ as $|y|\to0$ at the rate $|y|$, while $X(y)\to\infty$ as $|y|\to\infty$ at a different rate $|y|^{1-a}$. 
This refined form \eqref{X0} and its proof originate from \cite[Lemma 2.1]{CHIK1}. 
\end{remark}

\begin{lemma}\label{Fpinv}
The function $F$ belongs to $C^1(\R^3)$, and the derivative matrix $F'(v)$ is invertible for each $v\in\R^3$.
Consequently, $X\in C^1(\R^3)$ and $X'(y)$ is invertible for each $y\in\R^3$.
\end{lemma}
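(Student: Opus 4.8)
The plan is to establish the $C^1$ regularity of $F$ first, then compute $F'(v)$ explicitly and prove it is invertible via a quadratic-form estimate, and finally transfer both conclusions to $X$ using the inverse function theorem together with the global bijectivity already provided by Lemma \ref{Fsolve}.

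\emph{Regularity of $F$.} Using \eqref{eq2} and \eqref{Fdef} I would split $F(v)=a_0 v+\mathcal R\,\Jm v+\sum_{i=1}^N a_i\,h_i(v)$ with $h_i(v)=|v|^{\alpha_i}v$; the first part is linear, hence $C^\infty$, so it suffices to check that each $h_i$ is $C^1$ on $\R^3$. Away from the origin this is clear, with
\[
Dh_i(v)=|v|^{\alpha_i}I+\alpha_i\,|v|^{\alpha_i-2}\,v v^{\rm T},
\]
and since $\alpha_i>0$ both terms tend to $0$ as $v\to 0$ (for the second, note $|vv^{\rm T}|=|v|^2$, so its norm is $\alpha_i|v|^{\alpha_i}$). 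Because $|h_i(v)|=|v|^{\alpha_i+1}=o(|v|)$, the map $h_i$ is also differentiable at $0$ with $Dh_i(0)=0$, which matches the limit above; hence $h_i\in C^1(\R^3)$ and therefore $F\in C^1(\R^3)$, with
\[
F'(v)=g(|v|)\,I+\frac{g'(|v|)}{|v|}\,v v^{\rm T}+\mathcal R\,\Jm \quad (v\ne 0), \qquad F'(0)=a_0 I+\mathcal R\,\Jm.
\]

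\emph{Invertibility of $F'(v)$.} The key observation is that $\Jm$ is antisymmetric by \eqref{JR}, so $\xi^{\rm T}\Jm\xi=0$ for every $\xi\in\R^3$; combined with $\xi^{\rm T}(vv^{\rm T})\xi=(v\cdot\xi)^2\ge 0$ and $g'(s)=\sum_{i=1}^N a_i\alpha_i s^{\alpha_i-1}\ge 0$ for $s>0$, this gives
\[
\xi^{\rm T}F'(v)\,\xi=g(|v|)\,|\xi|^2+\frac{g'(|v|)}{|v|}\,(v\cdot\xi)^2\;\ge\; a_0\,|\xi|^2
\]
for $v\ne 0$, and the same bound holds trivially at $v=0$ since $g(0)=a_0$. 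Thus $F'(v)\xi=0$ forces $\xi=0$, so $F'(v)$ is an injective linear map on $\R^3$, hence invertible.

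\emph{Conclusion for $X$ and the main obstacle.} Since $F$ is a bijection of $\R^3$ by Lemma \ref{Fsolve}, the inverse $X=F^{-1}$ is globally well defined. Applying the inverse function theorem at an arbitrary $v\in\R^3$ --- justified by the two steps above --- shows $F$ is a $C^1$ diffeomorphism of a neighborhood of $v$ onto a neighborhood of $y=F(v)$; as $v$ ranges over $\R^3$ these image neighborhoods cover $\R^3$, and by global injectivity of $F$ the local inverses agree with $X$, so $X\in C^1(\R^3)$. Differentiating the identity $F(X(y))=y$ then gives $F'(X(y))\,X'(y)=I$, whence $X'(y)=\big(F'(X(y))\big)^{-1}$ is invertible for each $y$. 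The one genuinely delicate point is the $C^1$ regularity of $F$ at the origin: the exponents $\alpha_i$ may be non-integers below $1$, so the radial profile $g(|v|)$ is itself not differentiable at $0$, and the argument really relies on the extra factor $v$ in each term $|v|^{\alpha_i}v$ (equivalently $\alpha_i+1>1$) to make the derivative continuous there.
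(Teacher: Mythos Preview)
Your proof is correct and follows essentially the same approach as the paper: compute $F'(v)$ explicitly, use the antisymmetry of $\Jm$ and the nonnegativity of $g'$ to get the quadratic-form lower bound $\xi^{\rm T}F'(v)\xi\ge g(|v|)|\xi|^2\ge a_0|\xi|^2$, deduce invertibility, and then invoke the inverse function theorem together with the global bijectivity from Lemma~\ref{Fsolve}. Your treatment of the $C^1$ regularity at the origin is in fact more careful than the paper's, which simply writes down the formulas \eqref{Fprime}--\eqref{Fp0} and asserts continuity.
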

\begin{proof}
Elementary calculations show that
\begin{align}
 \label{Fprime}
F'(v)&=g'(|v|)\frac{v v^{\rm T}}{|v|}+g(|v|)\mathbf I_3+ \mathcal R \Jm\text{ for }v\ne 0,\\
\label{Fp0}
F'(0)&=g(0)\mathbf I_3+\mathcal R \Jm.
\end{align}
Clearly, $F'(v)$ is continuous at $v\ne 0$, and $F'(v)\to F'(0)$ as $v\to 0$. Therefore, $F\in C^1(\R^3)$.

For $z\in \R^3$, we have  
\begin{align*}
z^{\rm T} F'(v) z 
&=g'(|v|)\frac{(z^{\rm T}v)^2}{|v|}+g(|v|)|z|^2\text{ for }v\ne 0,\\
z^{\rm T} F'(0) z&=g(0)|z|^2.
\end{align*}
Since $g'(s)>0$ for $s>0$, it follows that
\beq\label{Fzz}
z^{\rm T} F'(v) z \ge g(|v|)|z|^2 \text{ for all } v,z\in\R^3.
\eeq

Let $v\in\R^3$. If $F'(v)z=0$, then $0=z^{\rm T} F'(v)z\ge g(|v|)|z|^2$, which implies that $z=0$. Hence, $F'(v)$ is invertible.

By the Inverse Function Theorem, the statements for $X$ follow those for $F$.
\end{proof}

\begin{lemma}\label{Xder}
For any $y\in\R^3$, the derivative matrix $X'(y)$ satisfies
\beq\label{Xprime}
c_6\chi_1^{-1}(1+|y|)^{-a}\le |X'(y)|\le  c_7(1+\chi_1)^a (1+|y|)^{-a},
\eeq
\beq\label{hXh} 
\xi^{\rm T} X'(y)\xi \ge c_8 \chi_1^{-2}(1+|y|)^{-a} |\xi|^2\text{ for all }\xi\in \R^3,
\eeq
where 
\begin{align*}
c_6= \sqrt3(2^{-\alpha_N}\min\{1,a_N\})^a/(\alpha_N+2),\quad c_7=c_* 2^{\alpha_N}/\min\{a_0,a_N\},\quad
c_8=c_4/(\alpha_N+2)^2. 
\end{align*}
\end{lemma}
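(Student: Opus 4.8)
The plan is to bound $X'(y)$ by relating it to $F'(v)$ where $v = X(y)$, via the Inverse Function Theorem identity $X'(y) = F'(v)^{-1}$. From Lemma \ref{Fpinv} we know both matrices are invertible, so everything is well-defined. The key structural input is the formula \eqref{Fprime} for $F'(v)$ together with the quadratic-form lower bound \eqref{Fzz}, namely $z^{\rm T} F'(v) z \ge g(|v|)|z|^2$. The strategy is: first estimate $|X'(y)|_{\rm op}$ and $\xi^{\rm T} X'(y)\xi$ in terms of $|v|$, then convert $|v|$-bounds into $|y|$-bounds using the already-established inequalities from Lemma \ref{lem21} (specifically \eqref{gf1}, \eqref{vf}, and the estimate \eqref{vy}, which says $(1+|v|)^{\alpha_N} \le C(1+|y|)^a$), and finally pass between $|\cdot|_{\rm op}$ and $|\cdot|$ using \eqref{nnorms}.

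First I would handle the lower bound \eqref{hXh}. Write $\xi = F'(v)^{\rm T}\eta$ for the appropriate $\eta$ — actually cleaner: for any $\xi$, set $\eta = X'(y)\xi$, so $\xi = F'(v)\eta$. Then $\xi^{\rm T} X'(y)\xi = \eta^{\rm T} F'(v)^{\rm T} X'(y) F'(v)\eta$; since $X'(y) = F'(v)^{-1}$, this equals $\eta^{\rm T} F'(v)^{\rm T}\eta = \eta^{\rm T} F'(v)\eta$ (using that the symmetric part is what survives a quadratic form). By \eqref{Fzz} this is $\ge g(|v|)|\eta|^2$. Now $|\xi| = |F'(v)\eta| \le |F'(v)|_{\rm op}|\eta|$, and from \eqref{Fprime} one bounds $|F'(v)|_{\rm op} \le |g'(|v|)||v| + g(|v|) + \mathcal R$; using $g'(s)s \le \alpha_N g(s)$ (since each monomial $a_i s^{\alpha_i}$ has derivative $a_i\alpha_i s^{\alpha_i-1}$, so $g'(s)s = \sum a_i\alpha_i s^{\alpha_i} \le \alpha_N g(s)$) gives $|F'(v)|_{\rm op} \le (\alpha_N+1)g(|v|) + \mathcal R \le (\alpha_N+1)(g(|v|)+\mathcal R) \le (\alpha_N+1)\chi_1(1+|v|)^{\alpha_N}$. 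Hence $|\eta|^2 \ge |\xi|^2/[(\alpha_N+1)\chi_1(1+|v|)^{\alpha_N}]^2$, so $\xi^{\rm T}X'(y)\xi \ge g(|v|)|\xi|^2/[(\alpha_N+1)\chi_1(1+|v|)^{\alpha_N}]^2$. Bounding $g(|v|) \ge \min\{a_0,a_N\}2^{-\alpha_N}(1+|v|)^{\alpha_N}$ as in \eqref{g1} and then applying \eqref{vy} to replace $(1+|v|)^{\alpha_N}$ in the denominator by $C(1+|y|)^a$ yields the claimed form $c_8\chi_1^{-2}(1+|y|)^{-a}|\xi|^2$ after collecting constants.

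For the operator-norm upper bound in \eqref{Xprime}: $|X'(y)|_{\rm op} = |F'(v)^{-1}|_{\rm op} = 1/\min_{|z|=1}|F'(v)z|$, and $|F'(v)z| \ge z^{\rm T}F'(v)z \ge g(|v|)$ by \eqref{Fzz} with $|z|=1$; bounding $g(|v|) \ge \min\{a_0,a_N\}2^{-\alpha_N}(1+|v|)^{\alpha_N}$ and then using \eqref{vy} in reverse — we need a lower bound on $(1+|v|)^{\alpha_N}$, which from \eqref{gf1} gives $|y| \le \chi_1(1+|v|)^{\alpha_N+1}$ hence $(1+|v|)^{\alpha_N} \ge (\chi_1^{-1}|y|)^a$... but this degenerates near $y=0$, so instead I would argue $(1+|v|)^{\alpha_N} \ge c(1+|y|)^a$ directly: from $|v| \le c_3|y|^{1-a}$ in \eqref{vf} and $|v| \le \chi_1^{-1}\cdot$(something) for small $y$, one gets $1+|v| \le C(1+|y|)^{1-a}$... hmm, one actually wants the opposite direction here. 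The cleanest route: $|F'(v)|_{\rm op} \ge g(|v|) \ge a_0 > 0$ always, and also $|F'(v)|_{\rm op} \ge$ (the $(1+|v|)^{\alpha_N}$ growth), combined so that $|F'(v)|_{\rm op} \ge c\,\min\{a_0,a_N\}2^{-\alpha_N}(1+|v|)^{\alpha_N}$; then $(1+|v|)^{\alpha_N} \ge c(1+|y|)^a$ needs $1+|y| \le C(1+|v|)^{\alpha_N+1}$, which is exactly the first inequality of \eqref{gf1} after noting $|y| \le \chi_1(1+|v|)^{\alpha_N+1}$. This gives $|X'(y)|_{\rm op} \le C\chi_1^{-1}(1+|y|)^{-a}$; then $|X'(y)| \le c_*|X'(y)|_{\rm op}$ via \eqref{nnorms}. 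The lower bound $|X'(y)| \ge |X'(y)|_{\rm op} \ge c_6\chi_1^{-1}(1+|y|)^{-a}$ follows from $|F'(v)|_{\rm op} \le (\alpha_N+1)\chi_1(1+|v|)^{\alpha_N}$ and \eqref{vy}, giving $|X'(y)|_{\rm op} \ge 1/[(\alpha_N+1)\chi_1(1+|v|)^{\alpha_N}] \ge c(1+|y|)^{-a}/\chi_1$; inserting the explicit $\sqrt 3$ (from a comparison of Euclidean vs operator norm, or from a direct eigenvalue count) and tracking constants gives the stated $c_6$.

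The main obstacle I anticipate is the bookkeeping of constants — the statement demands explicit $c_6, c_7, c_8$ with specific forms, so each inequality ($g'(s)s \le \alpha_N g(s)$, the two-sided bounds on $g$, the conversion \eqref{vy}, and the norm comparison \eqref{nnorms}) must be applied with care to match the claimed expressions, and in particular one must be careful about which bound on $(1+|v|)^{\alpha_N}$ in terms of $(1+|y|)^a$ is needed (upper vs. lower) in each of the four inequalities. A secondary technical point is justifying the quadratic-form manipulation $\eta^{\rm T}F'(v)^{\rm T}X'(y)F'(v)\eta = \eta^{\rm T}F'(v)\eta$ cleanly despite $F'(v)$ being non-symmetric (the skew part $\mathcal R\Jm$ contributes nothing to any quadratic form, so $z^{\rm T}F'(v)z = z^{\rm T}\,\mathrm{sym}(F'(v))\,z$, and one works with the symmetric part throughout). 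Everything else is routine substitution.
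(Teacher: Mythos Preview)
Your approach is essentially the paper's: write $X'(y)=F'(v)^{-1}$ with $v=X(y)$, use the quadratic-form bound \eqref{Fzz} and the explicit formula \eqref{Fprime} to control $F'(v)$ two-sidedly, then convert $(1+|v|)^{\alpha_N}$ to $(1+|y|)^a$ via the relations already set up in Lemma~\ref{lem21}. Two small corrections to your sketch. First, the $\sqrt3$ in $c_6$ is obtained cleanly from the Euclidean (not operator) norm via $\sqrt3=|\mathbf I_3|=|F'(v)X'(y)|\le |F'(v)|\,|X'(y)|$, after computing $|F'(v)|\le(\alpha_N+2)(g(|v|)+\mathcal R)$ directly from \eqref{Fprime} using the orthogonality of the symmetric and skew parts; your route through $|X'(y)|_{\rm op}\ge 1/|F'(v)|_{\rm op}$ is also valid but yields $1/(\alpha_N+1)$ in place of $\sqrt3/(\alpha_N+2)$, so it will not reproduce the stated $c_6$ exactly. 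Second, in your upper-bound passage the line ``$|X'(y)|_{\rm op}\le C\chi_1^{-1}(1+|y|)^{-a}$'' is a slip: from $1+|y|\le(1+\chi_1)(1+|v|)^{\alpha_N+1}$ one gets $(1+|v|)^{\alpha_N}\ge(1+|y|)^a/(1+\chi_1)^a$, hence $g(|v|)\ge c\,(1+|y|)^a/(1+\chi_1)^a$, and so the factor is $(1+\chi_1)^a$ on top, matching the stated \eqref{Xprime}. Finally, your concern about the non-symmetry of $F'(v)$ in the quadratic-form step is unnecessary: $\eta^{\rm T}F'(v)^{\rm T}\eta=\eta^{\rm T}F'(v)\eta$ holds for any matrix since a scalar equals its own transpose.
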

\begin{proof}
Let $y\in\R^3$, then, by \eqref{Xdef},  $X'(y)=(F'(v))^{-1}$, with $F(v)=y.$
We first claim that
\beq\label{claim}
\frac{c_9}{g(|v|)+\mathcal R}\le |X'(y)|\le \frac{c_*}{g(|v|)},
\eeq
where $c_9=\sqrt3/(\alpha_N+2)$, and $c_*>0$ is the positive constant in \eqref{nnorms}.

Accepting \eqref{claim} for a moment, we prove the inequality \eqref{Xprime}.
Observe, by  \eqref{fov}, that
\beq\label{g5}
1+|y|\le (\chi_1+1)(1+|v|)^{\alpha_N}.
\eeq
On the one hand, \eqref{g1} and \eqref{g5} yield
\beq\label{g3}
g(|v|)
\ge \min\{a_0,a_N\} 2^{-\alpha_N} (1+|y|)^a/(1+\chi_1)^a.
\eeq
On the other hand, \eqref{g2} and \eqref{vy} give 
\beq\label{g4}
g(|v|)+\mathcal R\le \chi_1(1+|y|)^a (2^{\alpha_N}/\min\{1,a_N\})^a.
\eeq
Then, by combining \eqref{claim}, \eqref{g3} and \eqref{g4}, we obtain \eqref{Xprime}.

We now prove the claim \eqref{claim}.

\medskip
\noindent\textit{Proof of the first inequality in \eqref{claim}.} 
First, we consider $y,v\ne 0$. In \eqref{Fprime}, the matrix $g'(|v|)|v|^{-1}v v^{\rm T}+g(|v|)\mathbf I_3$ is symmetric, while $\mathcal R \Jm$ is anti-symmetric. Hence they are orthogonal, and, together with \eqref{Jnorm}, we have
\beqs
\begin{split}
|F'(v)|^2
&=\Big|g'(|v|)\frac{v v^{\rm T}}{|v|}+g(|v|)\mathbf I_3\Big|^2+\mathcal R^2 |\Jm|^2\\
&={\rm trace}\Big\{\big(g'(|v|)\frac{v v^{\rm T}}{|v|}+g(|v|)\mathbf I_3\big)^2\Big\}+2\mathcal R^2\\
&={\rm trace}\Big( (g'(|v|))^2v v^{\rm T} +2g'(|v|)g(|v|)\frac{v v^{\rm T}}{|v|} +(g(|v|))^2I_3\Big)+2\mathcal R^2.
\end{split}
\eeqs
Since trace$(vv^{\rm T})=|v|^2$, we have 
\begin{align*}
|F'(v)|^2
&=(g'(|v|))^2 |v|^2 +2g'(|v|)g(|v|)|v| +3(g(|v|))^2+2\mathcal R^2\\
&=(g'(|v|) |v| +g(|v|))^2 +2g(|v|)^2+2\mathcal R^2.  
\end{align*}

Note that
\beqs
g'(|v|)>0 \quad \text{ and }\quad 
g'(|v|)|v|\le \alpha_N g(|v|).
\eeqs

Then
\beq\label{i1}
|F'(v)|^2\le ((\alpha_N+1)^2+2)g(|v|)^2 +2\mathcal R^2
\le (\alpha_N+2)^2 (g(|v|)+\mathcal R)^2.
\eeq

Similarly, we have from \eqref{Fp0} that
\beq\label{i2}
|F'(0)|^2=3g(0)^2+ 2\mathcal R^2\le 3 (g(0)+\mathcal R)^2 \le (\alpha_N+2)^2(g(0)+\mathcal R)^2 .
\eeq

From \eqref{i1} and \eqref{i2}, it follows 
\beq\label{FpU}
|F'(v)|\le (\alpha_N+2)(g(|v|)+\mathcal R)\text{ for all }v\in \R^3.
\eeq

By \eqref{mmi}, $$\sqrt 3 = |\mathbf I_3| =| F'(v)X'(y) | \le |F'(v)|\cdot |X'(y)|,$$ which gives
\begin{align*}
|X'(y)|\ge \frac{\sqrt 3}{ |F'(v)|} \ge \frac{\sqrt3}{(\alpha_N+2)(g(|v|)+\mathcal R) }=\frac{c_9}{g(|v|)+\mathcal R}. 
\end{align*}

\medskip
\noindent\textit{Proof of  the second inequality in \eqref{claim}.} 
For $z\ne 0$, by Cauchy-Schwarz inequality and \eqref{Fzz}, we have
\beq\label{Fpz}
|F'(v)z|\ge \frac{|z^{\rm T} F'(v)z|}{|z|} \ge g(|v|)|z|.
\eeq
 
For any $\xi\in\mathbb R^3\setminus \{0\}$, applying \eqref{Fpz}  to $z=X'(y)\xi$, which is non-zero thanks to $X'(y)$ being invertible, gives
\begin{align*}
|\xi|\ge g(|v|)|X'(y)\xi|.
\end{align*}
Thus, the operator norm of $X'(y)$ is bounded by $\|X'(y)\|_{\rm op}\le 1/g(|v|)$,  and then, by relation \eqref{nnorms}, 
\beqs
|X'(y)|\le c_*\|X'(y)\|_{\rm op}\le c_*/g(|v|).
\eeqs

\medskip
\noindent\textit{Proof of \eqref{hXh}.}  
Let $u=X'(y)\xi=(F'(v))^{-1}\xi$, which gives $\xi=F'(v)u$.
Rewriting $[X'(y)\xi]\cdot \xi$ in terms of $u,v$ and using property \eqref{Fzz}, we have
\beq\label{Xh1}
[X'(y)\xi]\cdot \xi= u\cdot [ F'(v)u ]\ge |u|^2 g(|v|)=|X'(y)\xi|^2 g(|v|).
\eeq 
From \eqref{mm0} and \eqref{FpU}, for any $\xi\in\R^3$:
\beqs
|\xi|=|F'(v)X'(y)\xi|\le |F'(v)| \cdot |X'(y)\xi|\le (\alpha_N+2) (g(|v|)+\mathcal R)|X'(y)\xi|,
\eeqs
thus
\beq\label{Xh2}
|X'(y)\xi|\ge \frac{|\xi|}{(\alpha_N+2)(g(|v|)+\mathcal R)}.
\eeq
Combining \eqref{Xh1}, \eqref{Xh2} and \eqref{gg} yields
\begin{align*}
\xi^{\rm T}X'(y)\xi&\ge \frac{g(|v|)}{(\alpha_N+2)^2(g(|v|)+\mathcal R)^2} |\xi|^2\ge \frac{c_4\chi_1^{-2} |\xi|^2}{(\alpha_N+2)^2(1+|y|)^a}.
\end{align*}
This proves \eqref{hXh}.
\end{proof}

\section{Maximum estimates}\label{maxsec}

We will estimate the solutions of \eqref{ibvpg} by the maximum principle.
Denote \beq\label{Phi0} \hh(x,t)=\nabla u(x,t)+u^2(x,t)\mathcal Z(x,t).\eeq

We re-write the PDE in \eqref{ibvpg} in the non-divergence form as
\begin{align*}
\phi u_t=X'(\hh):(D\hh)^{\rm T}
&=X'(\hh):\big(D^2 u + u^2 D\mathcal Z(x,t)+ 2u \mathcal Z(x,t) Du\big)^{\rm T}\\
&=\mathbf A:\big (D^2 u + u^2 \Omega^2 \Jm^2 + 2u (\mathcal Z(x,t) Du)^{\rm T}\big ),
\end{align*}
where $\mathbf A=\mathbf A(x,t)= X'(\hh(x,t))$.

We write $\mathbf A=\mathbf S+\mathbf T,$ where $\mathbf S$ and $\mathbf T$ are the symmetric and anti-symmetric parts of $\mathbf A$, i.e.,
$$\mathbf S=(s_{ij})_{i,j=1,2,3}=\frac12(\mathbf A+\mathbf A^{\rm T})\quad \text{and}\quad 
\mathbf T=\frac12(\mathbf A-\mathbf A^{\rm T}).$$

Sine $D^2u$ is symmetric, we have 
$\mathbf T:D^2u=0$, hence $\mathbf A:D^2u=\mathbf S :D^2u$.

Similarly, $\Jm^2$ is symmetric, and we have $\mathbf A:\Jm^2=\mathbf S:\Jm^2$.

Therefore,
\beq\label{usym}
\phi u_t=\mathbf S:D^2 u+u^2 \Omega^2 \mathbf S:\Jm^2 +  2u \mathbf A:(\mathcal Z(x,t) Du)^{\rm T}.
\eeq

Equation \eqref{usym} turns out to possess a maximum principle.
Recall that the parabolic boundary of $U\times(0,T]$ is 
\beqs
\partial_p (U\times(0,T])=\bar U\times[0,T]\setminus U\times (0,T].
\eeqs

\begin{theorem}[Maximum principle]\label{maxpr}
Suppose $T>0$, and $u\in C(\bar U\times[0,T])\cap C_{x,t}^{2,1}(U\times (0,T])$ with $u\ge 0$ is a solution of \eqref{ueq} on $U\times (0,T]$. Then
 \beq\label{max-u}
 \max_{\bar U\times [0,T]} u =\max_{\partial_p (U\times(0,T])} u.
 \eeq
\end{theorem}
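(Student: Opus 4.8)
The plan is to establish the maximum principle by working with the non-divergence form \eqref{usym} and applying the classical parabolic maximum principle argument, being careful that the first-order term coefficient stays bounded on the relevant set. First I would note that $u\ge 0$, so on any compact subset of $\bar U\times[0,T]$ the argument $\hh=\nabla u+u^2\mathcal Z$ is bounded (as $u\in C^{2,1}_{x,t}$ inside and continuous up to the boundary), hence $\mathbf A=X'(\hh)$ is bounded there; moreover, by \eqref{hXh} the symmetric part $\mathbf S$ is (locally uniformly) positive definite, so \eqref{usym} is a (locally) uniformly parabolic equation with bounded coefficients. The centripetal term $u^2\Omega^2\,\mathbf S:\Jm^2$ is a zeroth-order term with no definite sign, so the standard trick is needed: perturb $u$ by $\varep e^{\lambda t}$ (or $\varep(e^{\lambda t}+M)$ for suitable $\lambda,M$) to kill the possibility of an interior maximum, then let $\varep\to0^+$.

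The key steps, in order: (1) Suppose for contradiction that the maximum of $u$ over $\bar U\times[0,T]$ is attained at some point $(x_0,t_0)$ with $x_0\in U$ and $t_0\in(0,T]$, and is strictly larger than the maximum over $\partial_p(U\times(0,T])$. (2) Introduce $w=u-\varep e^{\lambda t}$ for small $\varep>0$; for $\varep$ small enough $w$ still attains its max over $\bar U\times[0,T]$ at an interior-in-space, positive-time point $(x_1,t_1)$, and there $\nabla u(x_1,t_1)=0$ (so $\hh(x_1,t_1)=u^2\mathcal Z(x_1,t_1)$, still bounded since $(x_1,t_1)$ ranges in a compact set once $\varep$ is bounded), $D^2 u(x_1,t_1)\le 0$, and $u_t(x_1,t_1)\ge \varep\lambda e^{\lambda t_1}>0$ (with equality $u_t=0$ replaced by $\ge$ when $t_1<T$, and the one-sided inequality $u_t\ge \varep\lambda e^{\lambda t_1}$ when $t_1=T$). (3) Evaluate \eqref{usym} at $(x_1,t_1)$: the term $\mathbf S:D^2u\le 0$ because $\mathbf S\ge 0$ and $D^2u\le0$ (using $\mathbf M_1:\mathbf M_2\le 0$ when $\mathbf M_1\ge0$, $\mathbf M_2\le0$); the first-order term vanishes because $\nabla u(x_1,t_1)=0$; the remaining term is $u^2\Omega^2\,\mathbf S:\Jm^2$, which is bounded above by $C u^2$ where $C$ depends only on $\Omega$, $|\mathbf A|$ on the compact set, and $|\Jm^2|$ via \eqref{mmi} and \eqref{J2norm}. (4) Choose $\lambda$ large enough (depending on that bound $C$ and $\sup u$) so that $\phi\,\varep\lambda e^{\lambda t_1} > C u(x_1,t_1)^2\ge u^2\Omega^2\,\mathbf S:\Jm^2$, contradicting \eqref{usym}. (5) Letting $\varep\to0^+$ is then not even needed for the contradiction, but one should check the quantities stay controlled uniformly in small $\varep$; conclude \eqref{max-u}.

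The main obstacle I expect is handling the zeroth-order term $u^2\Omega^2\,\mathbf S:\Jm^2$ rigorously: unlike a genuine $c(x,t)u$ term with $c$ of any sign, here the coefficient of $u^2$ is itself a bounded (but not sign-definite) function, so the perturbation parameter $\lambda$ must be chosen after one knows an upper bound $\sup_{\bar U\times[0,T]} u = u(x_0,t_0) = m$, and one needs $\phi\varep\lambda e^{\lambda t_1} > \Omega^2 m^2\, c_*|X'(\hh(x_1,t_1))|_{\rm op}\,|\Jm^2|$ or a similar explicit bound; since $|\hh(x_1,t_1)|\le m^2\Omega^2|\Jm^2 x_1|$ is bounded (using \eqref{Jxineq} and boundedness of $U$) and $|X'|$ is bounded by \eqref{Xprime}, this bound is uniform over all candidate points $(x_1,t_1)$ and over small $\varep$, so $\lambda$ can indeed be fixed in advance. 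A secondary technical point is the treatment of the endpoint $t_1=T$, for which one uses the one-sided time derivative inequality rather than $u_t=0$; this is standard. Finally, one should remark that the claimed equality in \eqref{max-u} (rather than just $\le$) follows trivially since $\partial_p(U\times(0,T])\subset\bar U\times[0,T]$, so the $\ge$ direction is automatic.
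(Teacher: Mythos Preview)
Your approach has a genuine gap in step (4). You treat the zeroth-order term $u^2\Omega^2\,\mathbf S:\Jm^2$ as merely bounded, then try to dominate it via the additive perturbation $w=u-\varep e^{\lambda t}$. But the two parameter choices are incompatible: for the maximum of $w$ to remain interior you need $\varep(e^{\lambda t_0}-1)<m-m'$ (where $m=\max u$, $m'=\max_{\partial_p}u$, and $(x_0,t_0)$ is the interior max of $u$), while for the contradiction you need $\phi\varep\lambda e^{\lambda t_1}>Cm^2$. Since $t_1\in(0,T]$ is not under your control and could be close to $0$, the second condition essentially requires $\phi\varep\lambda>Cm^2$. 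Combining, you would need $\phi\lambda(m-m')>Cm^2(e^{\lambda t_0}-1)$, which fails for large $\lambda$ (right side grows exponentially) and need not hold for small $\lambda$ either. So no admissible pair $(\varep,\lambda)$ is guaranteed to exist, and the argument does not close. More broadly, a maximum principle for an equation with a zeroth-order term of the form $c(x,t)u^2$ with $c$ of indefinite sign simply cannot hold in general.

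The missing idea, which the paper supplies, is that the term actually has a \emph{favorable sign}: one computes $\mathbf S:\Jm^2=\eR^{\rm T}\mathbf S\eR-{\rm trace}(\mathbf S)$, and since $\mathbf S\ge 0$ (from \eqref{hXh}) this is at most $|\mathbf S|_{\rm op}-{\rm trace}(\mathbf S)=\max_i\lambda_i-\sum_i\lambda_i\le 0$. Hence $u^2\Omega^2\,\mathbf S:\Jm^2\le 0$ (here is where $u\ge 0$ is used), and at any interior maximum point of $u$ one gets $\phi u_t\le 0$ directly. After that, the standard multiplicative perturbation $u^\varep=e^{-\varep t}u$ with any $\varep>0$ suffices, and one passes $\varep\to 0$.
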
 
\begin{proof}
We make use of equation \eqref{usym} which is equivalent to \eqref{ueq}. 
We examine the second term on the right-hand side of \eqref{usym}.
Direct calculations using the formula of $\Jm^2$  in \eqref{JR} give
$$\mathbf S:\Jm^2
=\sum_{i,j=1}^3 k_ik_j s_{ij} -\sum_{i=1}^3 s_{ii}=\eR^{\rm T}\mathbf S\eR -\rm{trace}(\mathbf S).$$

By \eqref{hXh}, we have
$\xi^{\rm T}\mathbf A\xi \ge 0$ for all $\xi\in\R^3$,
and, hence,
\beq\label{Axx}
\xi^{\rm T}\mathbf S\xi \ge 0\text{ for all }\xi\in\R^3.
\eeq

By  \eqref{Axx} and the fact $\mathbf S$ is symmetric, we have $\mathbf S\ge 0$ with eigenvalues $\lambda_1,\lambda_2,\lambda_3\ge 0$. Then, applying Cauchy-Schwarz's inequality and \eqref{mop} to $\eR^{\rm T}\mathbf S \eR$, we obtain 
\beq \label{newpos}
\mathbf S:\Jm^2 \le \|\mathbf S\|_{\rm op} |\eR|^2 -\rm{trace}(\mathbf S)=\max\{\lambda_1,\lambda_2,\lambda_3\}\cdot 1 - (\lambda_1+\lambda_2+\lambda_3)\le 0.
\eeq 

Let $\varep>0$. Set  $u^\varep(x,t)=e^{-\varep t}u(x,t)$ and 
$\displaystyle M_\varep=\max_{\bar U\times [0,T]} u^\varep$.
We prove that
 \beq\label{maxe}
\max_{\bar U\times [0,T]} u^\varep = \max_{\partial_p (U\times (0,T])} u^\varep.
 \eeq

Suppose \eqref{maxe} is false. Then $M_\varep>0$ and there exists a point $(x_0,t_0)\in U\times (0,T]$ for such that 
$u^\varep(x_0,t_0)=M_\varep$. At this maximum point $(x_0,t_0)$ we have 
\beq\label{vt1}
u^\varep_t (x_0,t_0)\ge 0,
\quad \nabla u^\varep(x_0,t_0)=0,\quad   D^2 u^\varep(x_0,t_0)\le 0.
\eeq

We observe the followings:

(a) The second property of \eqref{vt1} implies $\nabla u(x_0,t_0)=0$.

(b) On the one hand, we have from \eqref{Axx} that $\mathbf S(x,t)\ge 0$ on $U\times(0,T]$.
On the other hand, the last property of \eqref{vt1} implies $D^2 u(x_0,t_0)\le 0$.
Then it is well-known that $\mathbf S(x_0,t_0): D^2u(x_0,t_0)\le 0$, see e.g. \cite[Chapter 2, Lemma 1]{FriedmanPara2008}.

From \eqref{usym}, \eqref{newpos}, and (a), (b), we obtain $u_t(x_0,t_0)\le 0$.
Therefore,
\beqs
u^\varep_t(x_0,t_0)=-\varep e^{-\varep t_0} u(x_0,t_0) +e^{-\varep t_0} u_t(x_0,t_0)
\le -\varep  u^\varep(x_0,t_0)=-\varep M_\varep <0.
\eeqs
This contradicts the first inequality in \eqref{vt1}, hence,  \eqref{maxe} holds true.
 Note that
 \beqs
e^{-\varep T}\max_{\bar U\times [0,T]} u\le \max_{\bar U\times [0,T]} u^\varep = \max_{\partial_p (U\times (0,T])} u^\varep\le \max_{\partial_p (U\times (0,T])} u
\le \max_{\bar U\times [0,T]} u.
 \eeqs
Then letting $\varep\to 0$ yields \eqref{max-u}.
\end{proof}

\begin{corollary}\label{maxcor}
 Let $u\in C(\bar U\times[0,T])\cap C_{x,t}^{2,1}(U\times (0,T])$ with $u\ge 0$ solve problem \eqref{ibvpg} on a time interval $[0,T]$ for some $T>0$. Then it holds for all $t\in[0,T]$ that
\beq\label{uM} 
\max_{x\in\bar U}u(x,t)\le M_0(t)\eqdef \sup_{x\in U} |u_0(x)|+\sup_{(x,\tau)\in \partial U \times (0,t]} |\psi(x,\tau)|.
\eeq
 \end{corollary}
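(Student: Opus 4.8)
The plan is to deduce Corollary~\ref{maxcor} directly from the maximum principle in Theorem~\ref{maxpr} applied to the given solution $u$. Since $u$ solves the PDE in \eqref{ibvpg} on $U\times(0,T]$ and satisfies $u\ge 0$ with the required regularity $u\in C(\bar U\times[0,T])\cap C^{2,1}_{x,t}(U\times(0,T])$, Theorem~\ref{maxpr} applies verbatim on the interval $[0,T]$ (and, as we note below, on each subinterval $[0,t]$). Thus
\[
\max_{\bar U\times[0,t]} u = \max_{\partial_p(U\times(0,t])} u \quad \text{for every } t\in(0,T].
\]
The right-hand side is a maximum over the parabolic boundary $\bar U\times\{0\}\,\cup\,\Gamma\times(0,t]$, which splits as the larger of the maximum of $u(\cdot,0)=u_0$ over $\bar U$ and the maximum of $u=\psi$ over $\Gamma\times(0,t]$. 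Bounding each piece by a supremum and inserting absolute values (harmless since $u\ge 0$, so $u=|u|$, and $u_0,\psi$ may a priori be of either sign on their domains before the constraint is used) yields
\[
\max_{\bar U\times[0,t]} u \le \sup_{x\in U}|u_0(x)| + \sup_{(x,\tau)\in\partial U\times(0,t]}|\psi(x,\tau)| = M_0(t),
\]
and in particular $\max_{x\in\bar U}u(x,t)\le M_0(t)$, which is exactly \eqref{uM}.

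There are two small technical points to address so that the argument is airtight. First, for a \emph{fixed} $t\in(0,T]$ I would apply Theorem~\ref{maxpr} with $T$ replaced by $t$; the hypotheses of the theorem are satisfied on $U\times(0,t]$ because the regularity and the PDE hold on all of $U\times(0,T]\supseteq U\times(0,t]$ and restriction preserves continuity and $C^{2,1}_{x,t}$ regularity. Second, one should observe that the supremum over $U$ of $|u_0|$ dominates the maximum over $\bar U$ of $u_0$: indeed $\max_{\bar U}u_0 = \max_{\bar U}u(\cdot,0)$, and by continuity of $u_0=u(\cdot,0)$ on $\bar U$ this equals the supremum over the dense subset $U$, which is at most $\sup_U|u_0|$. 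An entirely analogous continuity/restriction remark handles the boundary term, replacing $\max_{\Gamma\times(0,t]}\psi$ by $\sup_{\Gamma\times(0,t]}|\psi|$.

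I do not expect any genuine obstacle here: the corollary is a routine packaging of the maximum principle, and the only things to be careful about are (i) applying Theorem~\ref{maxpr} on the truncated cylinder $U\times(0,t]$ rather than $U\times(0,T]$, and (ii) the harmless passage from maxima of $u_0,\psi$ to suprema of $|u_0|,|\psi|$. If anything is delicate it is purely notational bookkeeping — matching the parabolic-boundary decomposition $\partial_p(U\times(0,t]) = \bar U\times\{0\}\cup\Gamma\times(0,t]$ to the two terms defining $M_0(t)$ — rather than mathematical.
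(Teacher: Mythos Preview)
Your proposal is correct and follows essentially the same route as the paper: apply Theorem~\ref{maxpr} with $T$ replaced by $t$, then bound the parabolic-boundary maximum by the two suprema defining $M_0(t)$. The paper's proof is terser and simply notes that continuity of $u$ on $\bar U\times[0,t]$ makes the parabolic-boundary maximum no larger than $M_0(t)$, while you spell out the decomposition and the sup-vs-max bookkeeping more carefully.
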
 
 \begin{proof}
  Because of the continuity of $u$ on $\bar U\times [0,t]$, the quantity $M_0(t)$, in fact, is an upper bound of the maximum of $u$ on $\partial_p(U\times(0,t])$. Then applying inequality \eqref{max-u} to $T=t$ yields estimate \eqref{uM}. 
 \end{proof}

\section{Preparations for the gradient estimates}\label{Gradprep}

This section contains technical preparations for the estimates for different norms of the gradient $\nabla u$ in the next three  sections.

Given two mappings $Q:U\to\R^3$ and  $w:U\to \R$, we define a function $K[w,Q]$ on $U$ by
\beq\label{Ku}
K[w,Q](x)=(1+|\nabla w(x) + w^2(x) Q(x) |)^{-a}\quad\text{ for } x\in U.
\eeq

This function will be conveniently used in comparisons with $X(\nabla w(x) + w^2(x) Q(x))$ arising in the PDE \eqref{ueq}.

\begin{lemma}\label{kuglem}
If $s\ge a$, then the following inequalities hold on $U$:
\begin{align}
\label{kug1}
 K[w,Q]|\nabla w|^s 
 &\le 2^{2s-a}|\nabla w|^{s-a}+2^{2s+1-a}(1+|w^2 Q|^s),\\
\label{kug2}
K[w,Q]|\nabla w|^s&\ge 3^{-1}|\nabla w|^{s-a} - 3^{-1} (1+|w^2 Q|^s).
\end{align}
\end{lemma}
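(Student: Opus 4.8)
The plan is to estimate $K[w,Q]|\nabla w|^s$ by relating $|\nabla w|$ to the quantity $\Phi := \nabla w + w^2 Q$ that appears inside the weight $K$, since $K[w,Q] = (1+|\Phi|)^{-a}$. Writing $\nabla w = \Phi - w^2 Q$, the triangle inequality gives both $|\nabla w| \le |\Phi| + |w^2 Q|$ and $|\Phi| \le |\nabla w| + |w^2 Q|$; these two bounds are the entire engine of the argument. The main technical tool beyond that is the elementary inequality \eqref{ee3} (and its consequence \eqref{ee2}) for splitting $p$-th powers of sums, and the fact that $0 < a < 1 \le s$ so that exponents like $s-a$ are nonnegative.

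For the upper bound \eqref{kug1}: starting from $|\nabla w| \le |\Phi| + |w^2 Q|$, apply \eqref{ee2} with $p = s$ to get $|\nabla w|^s \le 2^s(|\Phi|^s + |w^2 Q|^s)$. Multiplying by $K[w,Q] = (1+|\Phi|)^{-a} \le 1$ leaves the term with $|w^2 Q|^s$ already in acceptable form (up to the constant), so the crux is bounding $(1+|\Phi|)^{-a}|\Phi|^s$. Here I would write $|\Phi|^s = |\Phi|^{s-a}\cdot|\Phi|^a \le |\Phi|^{s-a}(1+|\Phi|)^a$, so that $(1+|\Phi|)^{-a}|\Phi|^s \le |\Phi|^{s-a}$. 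Then re-expand $|\Phi| \le |\nabla w| + |w^2 Q|$ and use \eqref{ee2} again with exponent $s-a$ (legitimate since $s - a > 0$), yielding $|\Phi|^{s-a} \le 2^{s-a}(|\nabla w|^{s-a} + |w^2 Q|^{s-a})$. Finally absorb $|w^2 Q|^{s-a} \le 1 + |w^2 Q|^s$ via \eqref{ee5} (since $0 \le s-a \le s$). Collecting constants — $2^s \cdot 2^{s-a} = 2^{2s-a}$ for the $|\nabla w|^{s-a}$ term, and the remaining pieces combining into the $(1+|w^2Q|^s)$ term with a constant no larger than $2^{2s+1-a}$ — gives exactly \eqref{kug1}. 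One should be slightly careful to check the constant bookkeeping closes at $2^{2s+1-a}$ rather than something larger, but this is routine.

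For the lower bound \eqref{kug2}: the idea is dual. From $|\Phi| \le |\nabla w| + |w^2 Q|$ we get $(1+|\Phi|)^{-a} \ge (1 + |\nabla w| + |w^2 Q|)^{-a}$, so $K[w,Q]|\nabla w|^s \ge |\nabla w|^s (1+|\nabla w|+|w^2Q|)^{-a}$. I would then split into two cases according to whether $|w^2 Q| \le |\nabla w|$ or $|w^2 Q| > |\nabla w|$. In the first case, $1 + |\nabla w| + |w^2 Q| \le 1 + 2|\nabla w| \le 2(1+|\nabla w|) \le 4(1 + |\nabla w|^{?})$-type estimates give $(1+|\nabla w|+|w^2Q|)^{-a} \ge c\,(1+|\nabla w|)^{-a}$ and hence $|\nabla w|^s(1+|\nabla w|)^{-a}\ge |\nabla w|^{s-a} - (\text{lower order})$, using that $|\nabla w|^a \le (1+|\nabla w|)^a$; in the second case, $|\nabla w| < |w^2 Q|$ forces $|\nabla w|^{s-a} \le |w^2 Q|^{s-a} \le 1 + |w^2 Q|^s$, so the claimed lower bound $3^{-1}|\nabla w|^{s-a} - 3^{-1}(1+|w^2Q|^s)$ is trivially satisfied (the right side is $\le 0$). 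The honest work is the first case, where one shows $(1+|\Phi|)^{-a}|\nabla w|^s \ge 3^{-1}|\nabla w|^{s-a} - 3^{-1}$, perhaps by further distinguishing $|\nabla w| \ge 1$ from $|\nabla w| < 1$ so that $|\nabla w|^{s-a}$ and $(1+|\nabla w|)^{-a}|\nabla w|^s$ can be compared cleanly.

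The main obstacle is purely the constant tracking in the lower bound: one must arrange the case split so that the universal constant comes out as exactly $3^{-1}$ (or anything $\le 3^{-1}$), which forces some care in how generously one bounds $1 + |\nabla w| + |w^2 Q|$ from above. Nothing is conceptually deep here — it is entirely a matter of organizing the triangle inequality plus \eqref{ee3}, \eqref{ee2}, \eqref{ee5} so the numerology lands on the stated constants. I expect the cleanest writeup to isolate, once and for all, the scalar inequality $(1+r)^{-a} r^s \ge 3^{-1} r^{s-a} - 3^{-1}$ for $r \ge 0$ (with $0<a<1\le s$), prove it by splitting at $r=1$, and then feed in $r = |\nabla w|$ together with the case analysis on $|w^2Q|$ versus $|\nabla w|$.
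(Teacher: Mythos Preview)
Your upper-bound argument for \eqref{kug1} is essentially identical to the paper's: both use $|\nabla w|\le |\Phi|+|w^2Q|$, the bound $K|\Phi|^s\le |\Phi|^{s-a}$, a second triangle inequality on $|\Phi|^{s-a}$, and \eqref{ee5} to absorb $|w^2Q|^{s-a}$; the constant bookkeeping closes exactly as you anticipate. One small slip: you write ``$0<a<1\le s$'', but the hypothesis is only $s\ge a$. Fortunately every step you outline needs only $s-a\ge 0$, so this is cosmetic.

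For the lower bound \eqref{kug2} your route genuinely differs from the paper's. The paper avoids all case analysis by writing
\[
|\nabla w|^{s-a}=K|\nabla w|^{s-a}(1+|\Phi|)^a\le K|\nabla w|^{s-a}\bigl(1+|\nabla w|^a+|w^2Q|^a\bigr),
\]
using the subadditivity of $t\mapsto t^a$ (this is \eqref{ee3} with $p=a<1$). Expanding the product gives three terms; the first is bounded via $|\nabla w|^{s-a}\le 1+|\nabla w|^s$, the second is $|\nabla w|^s$, and the third is handled by Young's inequality $|\nabla w|^{s-a}|w^2Q|^a\le |\nabla w|^s+|w^2Q|^s$. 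This produces $|\nabla w|^{s-a}\le 3K|\nabla w|^s+(1+|w^2Q|^s)$ in one stroke, and the constant $3$ is simply the number of terms. Your case-splitting approach (on $|w^2Q|\lessgtr|\nabla w|$ and then $|\nabla w|\lessgtr 1$) is correct and does land on $3^{-1}$ --- indeed in the nontrivial subcase $|w^2Q|\le|\nabla w|$, $|\nabla w|\ge 1$ one gets $K|\nabla w|^s\ge (3|\nabla w|)^{-a}|\nabla w|^s=3^{-a}|\nabla w|^{s-a}\ge 3^{-1}|\nabla w|^{s-a}$, and the remaining subcases are trivial since the right-hand side of \eqref{kug2} is nonpositive there. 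The paper's identity-based argument is shorter and makes the constant's origin transparent; your case analysis is more hands-on but equally valid.
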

\begin{proof}
We denote, in this proof, $K=K[w,Q]$. 
Let $s\ge a$, by the triangle inequality and inequalities \eqref{ee2}, \eqref{ee5}, we have
\begin{align*}
 K|\nabla w|^s
 &\le 2^s K\cdot (|\nabla w +w^2 Q|^s+|w^2 Q|^s)\\
 &\le 2^s |\nabla w +w^2 Q|^{s-a}+ 2^s K|w^2 Q|^s\\
 &\le 2^s\cdot 2^{s-a}( |\nabla w|^{s-a} + |w^2 Q|^{s-a})+2^s K|w^2 Q|^s.
\end{align*}
Using inequality \eqref{ee5} and the fact $K\le 1$, we estimate
\beqs
|w^2 Q|^{s-a}\le 1+|w^2 Q|^s, \quad K|w^2 Q|^s\le 1+|w^2 Q|^s. 
\eeqs 
Hence,
\beq\label{Kw1}
 K|\nabla w|^s
\le  2^{2s-a}|\nabla w|^{s-a} +(2^{2s-a}+2^s)(1+|w^2 Q|^s).
\eeq
Noticing that $2^{2s-a}+2^s\le 2\cdot 2^{2s-a}$, we obtain \eqref{kug1} from \eqref{Kw1}.

To prove \eqref{kug2}, we write $ |\nabla w|^{s-a} = K|\nabla w|^{s-a} (1+|\nabla w+w^2 Q| )^a$, and apply 
inequality \eqref{ee3} to have
\begin{align*}
 |\nabla w|^{s-a} &\le K|\nabla w|^{s-a} (1+|\nabla w|^a+|w^2 Q|^a )
 =K\cdot (|\nabla w|^{s-a} +|\nabla w|^s+|\nabla w|^{s-a}|w^2 Q|^a ).
\end{align*}

Concerning the last sum between the parentheses, applying inequality \eqref{ee5} to its first term gives 
\begin{align*}
|\nabla w|^{s-a}\le 1 + |\nabla w|^s,
\end{align*}
and applying Young's inequality to its last term with the conjugate powers $s/(s-a)$ and $s/a$, when $s>a$, gives
\begin{align*}
|\nabla w|^{s-a}|w^2 Q|^a\le |\nabla w|^s+|w^2 Q|^s.
\end{align*}
Obviously, this inequality also holds when $s=a$. Thus,
\beq\label{pre-kug}
|\nabla w|^{s-a} 
\le 3K|\nabla w|^s+K\cdot(1+|w^2 Q|^s)
\le 3K|\nabla w|^s+(1+|w^2 Q|^s).
\eeq
We obtain \eqref{kug2}.
\end{proof}

For our later convenience, we rewrite inequality \eqref{pre-kug}, by replacing $s-a$ with $s$,  as
\beq\label{kugs}
|\nabla w|^s\le 3K[w,Q]|\nabla w|^{s+a} + (1+|w^2 Q|^{s+a}) \text{ for all }s\ge 0.
\eeq

\begin{theorem}\label{LUembed}
For each $s\geq1$, there exists a constant $C>0$ depending only on $s$ and number $a$ in \eqref{adef} such that  for any function $w\in C^2(U)$ and non-negative function 
$\zeta\in C_c^1(U)$, the following inequality holds 
\beq\label{LUineq}
\begin{split}
\int_U K[w,Q] |\nabla w|^{2s+2} \zeta^2 dx
&\le
 C \sup_{\text{\rm supp} \zeta }(w^2) \Big \{
 \int_U K[w,Q] |D^2 w|^2 (|\nabla w|^{2s-2} +1)\zeta^2 dx\\
&\quad + \int_U K[w,Q] |\nabla w|^{2s}  \Big(|\nabla \zeta|^2 +(|w||Q'|+|Q|)^2\zeta^2 w^2 \Big) dx\Big\} \\
&\quad +C\int_U \Big( 1+  | w^2 Q|^{2s}\Big)  | w^2Q |^{2s+2}  \zeta^2 dx.
\end{split}
\eeq

Assume, in addition, that $Q$ and $Q'$ are bounded on $U$. Then,
\beq\label{ibt}
\begin{split}
&\int_U K[w,Q] |\nabla w|^{2s+2} \zeta^2 dx
\le
 C M_w^2 \int_U K[w,Q] |D^2 w|^2 |\nabla w|^{2s-2}  \zeta^2 dx\\
&\quad + CM_w^2 \int_U K[w,Q]|\nabla w|^{2s}  (|\nabla \zeta|^2 + (M_w\mu_Q+M_Q)^2 M_w^2 \zeta^2 ) dx \\
&\quad +  C{\rm sgn}(s-1) M_w^2 \int_U K[w,Q] |D^2 w|^2 \zeta^2 dx
+  C |U|(M_w^2 M_Q)^{2s+2}(1+M_w^2 M_Q)^{2s},
\end{split}
\eeq
where 
\begin{align*}
M_w=\sup_{x\in\text{\rm supp} \zeta } |w(x)|,\quad M_Q=\sup_{x\in U} |Q(x)|,\quad \mu_Q=\sup_{x\in U} |Q'(x)|.
\end{align*}
\end{theorem}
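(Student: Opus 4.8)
The plan is to prove \eqref{LUineq} by the Ladyzenskaja--Ural{\cprime}ceva integration-by-parts device, and then obtain \eqref{ibt} as a corollary by crude majorization of each term. To establish \eqref{LUineq}, I would start from the left-hand side and write $K[w,Q]|\nabla w|^{2s+2}\zeta^2 = \big(K[w,Q]|\nabla w|^{2s}\zeta^2\big)\,\nabla w\cdot\nabla w$, treat one factor of $\nabla w$ as a divergence-form quantity to be integrated by parts in $x$, and move the derivative onto the remaining factors. Differentiating $K[w,Q]|\nabla w|^{2s}\zeta^2$ produces: (i) a term with $D^2w$ coming from $\nabla(|\nabla w|^{2s})$, which is controlled by $|\nabla w|^{2s-2}|D^2w|$; (ii) a term with $\nabla\zeta$; and (iii) a term from $\nabla K[w,Q]$. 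For the last one I would use that $\nabla K[w,Q]$ is, up to the bounded factor $a(1+|\cdot|)^{-1}$, controlled by $\nabla|\nabla w + w^2Q|$, hence by $|D^2w| + |w||\nabla w||Q| + w^2|Q'|$; keeping $K[w,Q]$ as a weight on both sides and using $K[w,Q]\le 1$ handles the algebra. The boundary term vanishes since $\zeta$ has compact support in $U$.

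After integrating by parts and applying Cauchy--Schwarz to each resulting product, every term carries one factor of $|\nabla w|$ more (or a $|w^2Q|$-type lower-order contribution), which is exactly the structure needed to absorb: I would use Young's inequality to split off a small multiple of $\int_U K[w,Q]|\nabla w|^{2s+2}\zeta^2\,dx$ on the right and absorb it into the left-hand side, while the complementary factors are bounded by $\sup_{\operatorname{supp}\zeta}(w^2)$ times the $D^2w$- and $\nabla\zeta$-terms, plus the purely lower-order $|w^2Q|^{2s+2}(1+|w^2Q|^{2s})$ term. The appearance of $\sup_{\operatorname{supp}\zeta}(w^2)$ is forced because each integration by parts of a $\nabla w$ against the cutoff structure loses a power of $w$ (from $\nabla(w^2Q)$ inside $\hh$) or must be balanced by $w^2$ through Young's inequality on the mixed terms $|\nabla w|^{2s+1}\cdot(|w||Q|\,|w|\zeta)$. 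Handling the lower-order terms is routine via \eqref{ee2}, \eqref{ee5}, and \eqref{kugs}, which converts stray powers $|\nabla w|^{2s}$ without the $K$-weight into $K$-weighted $|\nabla w|^{2s+2}$ (absorbable) plus $|w^2Q|$-type terms.

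For \eqref{ibt}, I would simply bound, in \eqref{LUineq}, $\sup_{\operatorname{supp}\zeta}(w^2)=M_w^2$, $|Q|\le M_Q$, $|Q'|\le\mu_Q$, $(|w||Q'|+|Q|)^2\zeta^2 w^2 \le (M_w\mu_Q+M_Q)^2 M_w^2\zeta^2$, and in the final integral $|w^2Q|\le M_w^2 M_Q$ together with $\int_U\zeta^2\,dx\le |U|$. The only point needing slight care is the term $\int_U K[w,Q]|D^2w|^2(|\nabla w|^{2s-2}+1)\zeta^2\,dx$: when $s=1$ the ``$+1$'' term is genuinely present and must be kept, whereas for $s>1$ one may keep it as the separate $\operatorname{sgn}(s-1)$-term; writing it with the factor ${\rm sgn}(s-1)$ (which equals $1$ for $s>1$ and $0$ for... — note $s\ge1$, so ${\rm sgn}(s-1)\in\{0,1\}$) records exactly this. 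I expect the main obstacle to be bookkeeping: tracking the constant $C=C(s,a)$ through the several Young's-inequality splittings and the use of \eqref{kugs}, and making sure the absorbed term genuinely has a coefficient $<1$ so the absorption is legitimate; there is no conceptual difficulty beyond the standard LU scheme, but the presence of the variable weight $K[w,Q]$ and of the inhomogeneous drift $w^2Q$ inside $\hh$ means one must consistently carry $K[w,Q]$ through every estimate rather than discard it.
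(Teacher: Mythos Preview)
Your proposal is correct and follows essentially the same route as the paper: write $I=\int K|\nabla w|^{2s}\partial_i w\,\zeta^2\,\partial_i w\,dx$, integrate one $\partial_i w$ by parts onto $w$, expand the derivative of the remaining factor, and absorb via Young's inequality; then \eqref{ibt} is the crude majorization you describe, with the observation $|\nabla w|^{2s-2}+1\le 2|\nabla w|^{2s-2}+\operatorname{sgn}(s-1)$. One technical point the paper handles that you do not mention: since $K[w,Q]=(1+|\hh|)^{-a}$ is not smooth where $\hh=0$, the paper first proves the inequality with the regularized weight $K_*=(1+|\hh|^2)^{-a/2}$ and only afterward compares $K_*$ with $K[w,Q]$ via $K\le K_*\le 2^{a/2}K$; you should insert this step (also, \eqref{kugs} is not actually needed here).
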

\begin{proof}
For convenience in computing the derivatives, we will first establish \eqref{LUineq} with $K[w,Q]$ being replaced by the following function
\beq\label{Kstar}
K_*(x)= (1+ |\nabla w(x) + w^2(x)Q(x)|^2 )^{-a/2}.
\eeq

In this proof, the symbol $C$ denotes a generic positive constant depending only on $s$ and number $a$ in \eqref{adef}, while $C_\varep>0$ depends on $s$, $a$ and $\varep$.

We use Einstein's summation convention in our calculations. 
Let 
$$I =  \int_U K_*(x) |\nabla w(x)|^{2s+2} \zeta^2(x) dx.$$
By integration by parts and direct calculations, we see that 
\begin{align*}
I 
&=\int_U (K_*  |\nabla w|^{2s} \partial_i w \zeta^2)\cdot \partial_i w dx
 = - \int_U \partial_i (K_*  |\nabla w|^{2s} \partial_i w \zeta^2)\cdot  w dx\\
 &=I_1+I_2+I_3+I_4,
\end{align*}
where
\begin{align*}
I_1& = a \int_U \frac{(\nabla w + w^2Q) \cdot (\partial_i \nabla w +  2w\partial_i w Q+ w^2  \partial_i Q)   }{(1+ |\nabla w + w^2Q|^2)^{a/2+1}} |\nabla w|^{2s} \partial_i w\cdot  \zeta^2 \cdot w dx,\\
I_2&= - \int_U K_* |\nabla w|^{2s} \Delta w \cdot \zeta^2  \cdot w dx,\\
I_3&= - 2s \int_U K_*\cdot \Big(|\nabla w|^{2s-2} \partial_i\partial_j w \partial_j w\Big)\cdot  \partial_i w \cdot \zeta^2  \cdot w dx,\\
I_4&= -2 \int_U K_* |\nabla w|^{2s} \partial_i w \zeta \partial_i  \zeta \cdot w dx.
\end{align*}

Above, we used $\partial_i(|\nabla w|^{2s})=\partial_i(|\nabla w|^2)^s=s(|\nabla w|^2)^{s-1}\partial_i (|\nabla w|^2)$ to avoid possible singularities when $\nabla w=0$.

We estimate $I_1$ first. Observe that
\beqs
\frac{1}{(1+ |\nabla w + w^2Q|^2)^{a/2+1}} 
= \frac{K_*}{1+|\nabla w + w^2Q|^2}\le\frac{2K_*}{(1+|\nabla w + w^2Q|)^2} .
\eeqs

By Cauchy-Schwarz and triangle inequalities, we have
\begin{align*}
I_1
&\le C \int_U \frac{K_* }{(1+|\nabla w + w^2Q|)^2} \cdot |\nabla w + w^2Q|\\
&\qquad \cdot \Big (|D^2 w| |\nabla w|^{2s+1}  +  |w||\nabla w|^{2s+2}|Q|+ w^2 |\nabla w|^{2s+1}|Q'| \Big)   \zeta^2 \cdot |w| dx \\
&\le  J_1+J_2+J_3,
\end{align*}
where
\begin{align*}
 J_1&= C\int_U \frac{K_* }{1+|\nabla w + w^2Q|}(|D^2 w| |\nabla w|^{2s+1}  )   \zeta^2 \cdot |w| dx,\\
 J_2&= C\int_U \frac{K_* }{1+|\nabla w + w^2Q|}( |wQ||\nabla w|^{2s+2})   \zeta^2 \cdot |w| dx,\\
 J_3&= C\int_U \frac{K_* }{1+|\nabla w + w^2Q|}(w^2 |Q'||\nabla w|^{2s+1} )   \zeta^2 \cdot |w| dx.
\end{align*}

For $J_1$, by using triangle inequality
$|\nabla w|\le  |\nabla w+ w^2  Q |+ |w^2  Q|,$
we have
\beqs
\begin{split}
J_1 &\le  C\int_U \frac{K_* }{1+|\nabla w + w^2  Q|}(|\nabla w+ w^2  Q |^{2s+1} + |w^2  Q|^{2s+1})  |D^2 w|    \zeta^2 |w| dx\\
&\le C  \int_U K_* \cdot \Big( |\nabla w+ w^2  Q |^{2s}+    |w^2  Q|^{2s+1}\Big)  |D^2 w|    \zeta^2 |w| dx.
\end{split}
\eeqs
Applying triangle inequality and \eqref{ee2} gives
$$ |\nabla w+ w^2  Q |^{2s}\le C( |\nabla w|^{2s}+ |w^2  Q|^{2s}).$$
Thus,
\beqs
J_1 \le C \int_U K_*  |\nabla w|^{2s}  |D^2 w|    \zeta^2 |w| dx + C \int_U K_* \cdot \Big( |w^2  Q |^{2s}+  |w^2  Q|^{2s+1}\Big)  |D^2 w|    \zeta^2 |w| dx.
\eeqs

Let $\varep>0$. Denote
\beqs
I_5=\int_U K_* |D^2 w|^2|\nabla w|^{2s-2}\zeta^2 w^2 dx,\quad
I_6=\int_U K_* |D^2 w|^2\zeta^2 w^2 dx.
\eeqs

In the last inequality for $J_1$, we apply Cauchy's inequality to obtain
\beqs
\begin{split}
J_1 &\le C \int_U K_*  |\nabla w|^{2s}  |D^2 w|    \zeta^2 |w| dx + C \int_U K_* \cdot \Big( |w^2  Q |^{2s}+  |w^2  Q|^{2s+1}\Big)  |D^2 w|    \zeta^2 |w| dx\\
&\le \varep I+ C_\varep I_5 +  C \int_U K_* \cdot \Big( |w^2  Q |^{4s}+  |w^2  Q|^{4s+2}\Big)    \zeta^2 dx
+CI_6.
\end{split}
\eeqs

Similarly, 
\beqs
\begin{split}
J_2 &\le C \int_U \frac{K_* }{1+|\nabla w + w^2  Q|}(|\nabla w+ w^2  Q |^{2s+2} + |w^2  Q|^{2s+2})  \zeta^2 w^2 |Q|dx\\
&\le C  \int_U K_* \cdot \Big( |\nabla w+ w^2  Q |^{2s+1}+    |w^2  Q|^{2s+2}\Big)    \zeta^2 w^2|Q| dx\\
&\le  C \int_U K_* \cdot \Big( |\nabla w|^{2s+1}+ |w^2  Q |^{2s+1}+  |w^2  Q|^{2s+2}\Big)   \zeta^2 w^2|Q| dx.
\end{split}
\eeqs

For $J_3$, neglecting the denominator in the integrand gives
\beqs
J_3 \le  C\int_U K_* |\nabla w|^{2s+1}  \zeta^2 |w|^3|Q'| dx.
\eeqs

Combining the above estimates for $J_1$, $J_2$ and $J_3$ yields 
\begin{align*}
 I_1\le \varep I+ C_\varep I_5 +CI_6+ J_4+J_5,
\end{align*}
where
\begin{align*}
 J_4&= C\int_U K_* |\nabla w|^{2s+1}  (|Q|+|w||Q'|) \zeta^2 w^2 dx,\\
J_5&=  C \int_U K_* \cdot \Big( |w^2  Q |^{4s}+  |w^2  Q|^{4s+2}\Big)    \zeta^2 dx
+ C\int_U K_* \cdot \Big( |w^2  Q |^{2s+1}+  |w^2  Q|^{2s+2}\Big)   \zeta^2 w^2 |Q|dx.
\end{align*}

We estimate, by using Cauchy's inequality,
\beqs
J_4
\le \varep I+C_\varep\int_U K_* |\nabla w|^{2s}   (|Q|+|w||Q'|)^2 \zeta^2 w^4 dx,
\eeqs
and, with $K_*\le 1$,
\begin{align*}
 J_5&\le  C \int_U \Big( |w^2  Q |^{4s}+  |w^2  Q|^{4s+2}\Big)    \zeta^2 dx
+ C\int_U \Big( |w^2  Q |^{2s+1}+  |w^2  Q|^{2s+2}\Big)   \zeta^2 w^2 |Q|dx\\
&=C\int_U ( |w^2 Q |^{4s}+  |w^2 Q|^{4s+2}+ |w^2 Q |^{2s+2}+  |w^2 Q|^{2s+3}) \zeta^2 dx.
\end{align*}
Applying inequality \eqref{ee4} gives
\beqs 
J_5\le    C\int_U \Big( | w^2Q |^{2s+2}+  | w^2 Q|^{4s+2}\Big)    \zeta^2 dx
= C\int_U \Big( 1+  | w^2 Q|^{2s}\Big)  | w^2Q |^{2s+2}  \zeta^2 dx.
\eeqs

Therefore,
\begin{align*}
 I_1&\le 2\varep I+ C_\varep I_5 +CI_6+C_\varep\int_U K_* |\nabla w|^{2s} (|Q|+|w||Q'|)^2  \zeta^2  w^4 dx\\
&\quad + C\int_U \Big( 1+  | w^2 Q|^{2s}\Big)  | w^2Q |^{2s+2}  \zeta^2 dx.
\end{align*}

The terms $I_2$, $I_3$, $I_4$ can be bounded simply  by
\begin{align*}
I_2+I_3+I_4
&\le \int_U K_* |\nabla w|^{2s} |\Delta w| \zeta^2  |w| dx
 + 2 s \int_U K_* |\nabla w|^{2s} |D^2  w|  \zeta^2  |w| dx\\
&\quad +2 \int_U K_* |\nabla w|^{2s+1}  \zeta |\nabla \zeta| |w|dx\\
&\le C\int_U K_*  |\nabla w|^{2s} |D^2 w | \zeta^2 |w| dx
+2\int_U K_*  |\nabla w|^{2s+1}  \zeta|\nabla \zeta|\, |w| dx.
\end{align*}
Applying Cauchy's inequality to each integral gives
\begin{align*}
I_2+I_3+I_4
& \le (\varep I + C_\varep I_5)+\Big(\varep I +C_\varep \int_U K_*  |\nabla w|^{2s}  |\nabla \zeta|^2 w^2 dx\Big).
\end{align*}

Combining the estimates of $I_1$ and $I_2+I_3+I_4$, we have
\beqs
I \le  4\varep I +C_\varep I_5+CI_6
+  C_\varep I_7 + C\int_U \Big( 1+  | w^2 Q|^{2s}\Big)  | w^2Q |^{2s+2}  \zeta^2 dx,
\eeqs
where 
\begin{align*}
I_7=\int_U K_* |\nabla w|^{2s}  \Big(|\nabla \zeta|^2 +(|w||Q'|+|Q|)^2\zeta^2 w^2 \Big)w^2 dx.
\end{align*}

Selecting $\varep=1/8$, we obtain
\beq\label{Ks}
I \le  C(I_5+I_6+I_7)
 +C\int_U \Big( 1+  | w^2 Q|^{2s}\Big)  | w^2Q |^{2s+2}  \zeta^2 dx.
\eeq

In each integral of $I_5$, $I_6$ and $I_7$, we bound
\begin{align*}
\zeta^2 w^2\le \zeta^2\sup_{\rm supp \zeta}(w^2),\quad |\nabla \zeta|^2 w^2\le |\nabla \zeta|^2 \sup_{\rm supp \zeta}(w^2).
\end{align*}
It then follows \eqref{Ks} that
\beq\label{LU0}
\begin{split}
\int_U K_* |\nabla w|^{2s+2} \zeta^2 dx
&\le
 C \sup_{\text{\rm supp} \zeta }(w^2) \Big \{
 \int_U K_* |D^2 w|^2 (|\nabla w|^{2s-2} +1)\zeta^2 dx\\
&\quad + \int_U K_* |\nabla w|^{2s}  \Big(|\nabla \zeta|^2 +(|w||Q'|+|Q|)^2\zeta^2 w^2 \Big) dx\Big\} \\
&\quad +C\int_U \Big( 1+  | w^2 Q|^{2s}\Big)  | w^2Q |^{2s+2}  \zeta^2 dx.
\end{split}
\eeq

We compare $K_*$ in \eqref{Kstar} with $K[w,Q]$ in \eqref{Ku}.
Because 
\beqs
2^{-1}(1+ |\nabla w + w^2 Q|)^2 \le 1+ |\nabla w+ w^2 Q|^2 \le (1+ |\nabla w + w^2 Q|)^2,
\eeqs
then
\beq\label{KK}
K[w,Q](x)\le K_*(x)\le 2^{a/2} K[w,Q](x)\quad\forall x\in U.
\eeq

Applying the first, respectively second, inequality of \eqref{KK} to the left-hand side, respectively right-hand side, of \eqref{LU0}, we obtain inequality \eqref{LUineq}.

Now, consider the case  $w$, $Q$ and $Q'$ are bounded. By simple estimates of the last two integrals of \eqref{LUineq} using the numbers $M_w$, $M_Q$, $\mu_Q$, and by using the following estimate 
$$|\nabla w|^{2s-2}+1\le 2|\nabla w|^{2s-2}+{\rm sgn}(s-1)$$
for the first integral on the right-hand side of \eqref{LUineq},
we obtain inequality  \eqref{ibt}.
\end{proof}

\section{Gradient estimates (I) }\label{L2asub}

This section is focused on \emph{a priori} estimates for the gradient of a solution $u(x,t)$ of the IBVP \eqref{ibvpg}.

Hereafter, we fix $T>0$. Let $u(x,t)$ be a $C_{x,t}^{2,1}(\bar U\times(0,T])\times C(\bar U\times[0,T])$ solution of \eqref{ibvpg}, not necessarily non-negative.

In the estimates of the Lebesgue norms below, we will use the energy method. For that, it is convenient to shift the solution to a function vanishing at the boundary.

Let $\Psi(x,t)$ be the extension of $\psi(x,t)$ from $\Gamma\times(0,T]$ to $\bar U\times[0,T]$. It is assumed to have necessary regularity needed for calculations below. All our following estimates, as far as the boundary data is concerned, will be expressed in terms of $\Psi$. This will not lose the generality since we can always translate them into $\psi$-dependence estimates, see e.g. \cite{HI1,JerisonKenig1995}.

Define $\bar u=u-\Psi$ and $\bar u_0=u_0-\Psi$.
We derive from \eqref{ibvpg} the equations for $\bar u$:
\beq\label{ubar}
\begin{aligned}
\begin{cases}
\phi \bar u_t=\nabla\cdot (X(\Phi(x,t))-\phi \Psi_t\quad &\text{on}\quad U\times (0,T],\\
\bar u(x,0)=\bar u_0(x) \quad &\text{on}\quad U,\\
\bar u(x,t)= 0\quad &\text{on}\quad \Gamma\times (0,T],
\end{cases}
\end{aligned}
\eeq
where $\hh(x,t)$ is the same notation as \eqref{Phi0}, i.e.,
$$\hh(x,t)=\nabla u(x,t)+u^2(x,t)\mathcal Z(x,t)=\nabla u(x,t)+\ZZ(x,t),$$
with $\ZZ(x,t)=u^2(x,t)\mathcal Z(x,t).$

The following ``weight'' function will play important roles in our statements and proofs:
$$\KK(x,t)=K[u(\cdot,t),\mathcal Z(\cdot,t)](x)= (1+|\hh(x,t)|)^{-a}.$$

We estimate the $L^{2-a}_{x,t}$-norm for $\nabla u$ in terms of the initial and boundary data.
Define 
\beq\label{MZ}
\MZ=\sup_{\bar U\times[0,T]}|\mathcal Z(x,t)|,
\eeq
\beq\label{Estar}
\Ecal_0=\intTU (\chi_1^{2(2+a)}|\nabla \Psi|^2+ \phi\chi_1^2 (|\Psi_t|^2+\Psi^2))dxdt.
\eeq

\begin{xnotation}
In the remaining of this paper, the constant $C$ is positive and generic with varying values in different places.
It depends on number $N$, the coefficients $a_i$'s and powers $\alpha_i$'s of the function $g$ in \eqref{eq2}, the number $c_*$ in \eqref{nnorms}, and the set $U$.
In sections \ref{gradsec} and \ref{maxintime}, it further depends on number $s$, the subsets $U'$, $V$, $U_k$'s of $U$.
However, it does not depend on the initial and boundary data of $u$, the functions $\Phi(x,t)$, $\mathcal Z(x,t)$, and numbers $T$, $T_0$, $t_0$, $\phi$, $\mathcal R$, $\chi_*$, $M_*$, whenever these are introduced. In particular, it is independent of the cut-off function $\zeta$ in Lemmas \ref{lem61}, \ref{lem62}, \ref{lem67}, Proposition \ref{prop63}, and inequality \eqref{iterate2}.
\end{xnotation}

\begin{proposition}\label{L2a} One has
 \begin{align}
 \intTU \KK|\nabla u|^2 dxdt
 &\le  C\Big\{ \chi_1^2 \phi (\|\bar u_0\|_{L^2}^2+\|u\|_{L^2(U\times (0,T))}^2)\notag\\
 &\quad\qquad+(1+\chi_1)^{2(2+a)}\MZ^2\|u\|_{L^4(U\times (0,T))}^4+\Ecal_0\Big\},\label{gradu0}\\
 \intTU |\nabla u|^{2-a} dxdt
 &\le  C \Big\{\chi_1^2 \phi (\|\bar u_0\|_{L^2}^2+\|u\|_{L^2(U\times (0,T))}^2) \notag\\
 &\quad\qquad +(1+\chi_1)^{2(2+a)}(T+\MZ^2\|u\|_{L^4(U\times (0,T))}^4)+\Ecal_0\Big\}. \label{gradu1}
 \end{align}
\end{proposition}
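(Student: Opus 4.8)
\textbf{Proof plan for Proposition \ref{L2a}.}

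The plan is to use the standard energy method applied to the shifted function $\bar u = u - \Psi$ which vanishes on the lateral boundary $\Gamma\times(0,T]$, so the argument hinges on testing the PDE \eqref{ubar} against $\bar u$ itself. First I would multiply the first equation of \eqref{ubar} by $\bar u$ and integrate over $U\times(0,t)$ for arbitrary $t\in(0,T]$. On the left-hand side, the time-derivative term produces $\tfrac{\phi}{2}\|\bar u(\cdot,t)\|_{L^2}^2 - \tfrac{\phi}{2}\|\bar u_0\|_{L^2}^2$ after integration by parts in $t$. On the right-hand side, the divergence term, after integrating by parts in $x$ and using $\bar u|_\Gamma = 0$, becomes $-\int\!\!\int X(\hh)\cdot\nabla\bar u\,dxd\tau$, while the term $-\phi\Psi_t$ contributes $-\phi\int\!\!\int \Psi_t\,\bar u\,dxd\tau$. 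Writing $\nabla\bar u = \nabla u - \nabla\Psi$ and $\nabla u = \hh - \ZZ = \hh - u^2\mathcal Z$, the main term splits as $-\int\!\!\int X(\hh)\cdot\hh\,dxd\tau + \int\!\!\int X(\hh)\cdot(u^2\mathcal Z + \nabla\Psi)\,dxd\tau$.

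The key coercivity input is the lower bound in \eqref{X2} (equivalently the $\KK$-weighted form), which gives $X(\hh)\cdot\hh \ge c_4\chi_1^{-2}|\hh|^2(1+|\hh|)^{-a} = c_4\chi_1^{-2}\KK|\hh|^2$; combined with the triangle inequality $|\nabla u| \le |\hh| + |\ZZ|$ and $\KK\le 1$ this dominates $c\,\chi_1^{-2}\int\!\!\int\KK|\nabla u|^2$ up to a lower-order term $\int\!\!\int\KK|u^2\mathcal Z|^2 \le \MZ^2\int\!\!\int u^4$. For the remaining terms on the right, I would use the upper bound in \eqref{X0}, namely $|X(\hh)| \le c_2\chi_1^a|\hh|(1+|\hh|)^{-a} \le c_2\chi_1^a\KK^{1/2}(\KK^{1/2}|\hh|)$, so that each product $X(\hh)\cdot(\cdots)$ is estimated by Cauchy–Schwarz with weight $\KK$, then Young's inequality absorbs the $\KK|\hh|^2$ (hence $\KK|\nabla u|^2$) piece into the left-hand side at the cost of a large constant, leaving terms like $C\chi_1^{2a}\int\!\!\int\KK|u^2\mathcal Z|^2$ and $C\chi_1^{2a}\int\!\!\int\KK|\nabla\Psi|^2 \le C\chi_1^{2a}\int\!\!\int|\nabla\Psi|^2$; the powers of $\chi_1$ are tracked so that they fit under the stated $\chi_1^{2(2+a)}$ and $(1+\chi_1)^{2(2+a)}$ budgets (the paper's stated policy is coherence over sharpness). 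The term $\phi\int\!\!\int\Psi_t\bar u$ is handled by Cauchy–Schwarz and Young, producing $C\phi(\|\Psi_t\|_{L^2}^2 + \|\bar u\|_{L^2}^2)$, and $\|\bar u\|_{L^2}^2 \le 2\|u\|_{L^2}^2 + 2\|\Psi\|_{L^2}^2$; all the $\Psi$-contributions are collected into $\Ecal_0$ as defined in \eqref{Estar}. Dropping the nonnegative term $\tfrac\phi2\|\bar u(\cdot,t)\|_{L^2}^2$ and then taking $t = T$ yields \eqref{gradu0}.

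For \eqref{gradu1}, I would apply Lemma \ref{kuglem} with $s = 2$, $w = u(\cdot,\tau)$, $Q = \mathcal Z(\cdot,\tau)$: inequality \eqref{kug2} gives $\KK|\nabla u|^2 \ge 3^{-1}|\nabla u|^{2-a} - 3^{-1}(1 + |u^2\mathcal Z|^2)$, hence $\int\!\!\int|\nabla u|^{2-a} \le 3\int\!\!\int\KK|\nabla u|^2 + \int\!\!\int(1 + |u^2\mathcal Z|^2) \le 3\int\!\!\int\KK|\nabla u|^2 + T|U| + \MZ^2\|u\|_{L^4}^4$; substituting \eqref{gradu0} and absorbing $|U|$ into the generic constant $C$ and the extra $T$ into the bracket gives \eqref{gradu1}. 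I do not anticipate a serious obstacle here — the one place requiring care is bookkeeping the powers of $\chi_1$ through the Young's-inequality splittings so that the coercive constant $c_4\chi_1^{-2}$ from \eqref{X2} combines with the $\chi_1^a$ and $\chi_1^{2a}$ factors from the upper bounds to land exactly at the advertised $\chi_1^{2(2+a)}$, and making sure the absorption constant is harmless (it depends only on the $a_i,\alpha_i,c_*$, hence is part of $C$).
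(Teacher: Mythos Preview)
Your proposal is correct and follows essentially the same route as the paper: test \eqref{ubar} with $\bar u$, use the coercivity \eqref{X2} on $X(\hh)\cdot\hh$ and the upper bound in \eqref{X0} on the cross terms $X(\hh)\cdot(\ZZ+\nabla\Psi)$, absorb via Young's inequality with $\varepsilon\sim\chi_1^{-2}$, handle $\phi\int\Psi_t\bar u$ by Cauchy, then pass from $\KK|\hh|^2$ to $\KK|\nabla u|^2$ by the triangle inequality and derive \eqref{gradu1} from \eqref{gradu0} via \eqref{kug2} with $s=2$. The only cosmetic difference is that the paper keeps the estimate pointwise in $t$ (a differential inequality) and integrates at the end, whereas you integrate over $U\times(0,t)$ from the outset; the bookkeeping of $\chi_1$-powers you flag is exactly what the paper does.
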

\begin{proof}
In this proof, we denote $J= \int_U \Psi_t \bar u dx$.
Multiplying the PDE \eqref{ubar} by $\bar u$, integrating over domain $U$ and using  integration by parts, we have 
 \begin{align*}
\frac{\phi }{2} \frac{d}{dt} \int_U \bar u^2 dx
&=- \int_U X(\hh)\cdot  \nabla \bar u dx-\phi J\\
&=-\int_UX(\hh)\cdot \hh dx
+\int_UX(\hh)\cdot (\nabla \Psi + \ZZ) dx-\phi  J.
\end{align*}

By \eqref{X0} and \eqref{X2}, we have 
\begin{align*}
\frac\phi 2  \frac{d}{dt} \int_U \bar u^2   dx&\le -c_4\chi_1^{-2}\int_U \KK |\hh|^2 dx
+c_2\chi_1^a \int_U \KK |\hh|(|\nabla \Psi|+|\ZZ|)dx-\phi  J.
\end{align*}
For the second integral on the right-hand side, applying Cauchy's inequality gives
\begin{align*}
\frac\phi 2  \frac{d}{dt} \int_U \bar u^2   dx&\le -c_4\chi_1^{-2}\int_U \KK |\hh|^2 dx\\
&\quad+\int_U \Big[\varep\KK|\hh|^2 +C\varep^{-1}\chi_1^{2a}\KK(|\nabla \Psi|+|\ZZ|)^2\Big]dx-\phi  J.
\end{align*}

Choosing $\varep=c_4\chi_1^{-2}/2$, we obtain 
\begin{align}\label{ewJ}
\frac\phi 2  \frac{d}{dt} \int_U \bar u^2   dx&\le -\frac{c_4\chi_1^{-2}}{2}\int_U \KK |\hh|^2 dx+C \chi_1^{2(1+a)}\int_U  (|\nabla \Psi|^2+|\ZZ|^2)dx-\phi  J.
\end{align}

Note that 
\beq\label{Wbound}
|\ZZ|\le \MZ u^2.
\eeq

Writing $\bar u$ in $J$ as $u-\Psi$, and using Cauchy's inequality, we have
\beq\label{pJ}
\phi |J|\le \int_U \phi |\Psi_t| (|u|+|\Psi|) dx\le \phi \int_U (u^2+|\Psi_t|^2+|\Psi|^2)dx.
\eeq

Utilizing estimates \eqref{Wbound} and \eqref{pJ} in \eqref{ewJ}, we derive
\begin{align*}
\frac\phi 2  \frac{d}{dt} \int_U \bar u^2   dx
+\frac{c_4\chi_1^{-2}}{2}\int_U \KK|\hh|^2 dx
&\le C\chi_1^{2(1+a)}\MZ^2\int_U u^4 dx\\
&\quad+C \chi_1^{2(1+a)}\int_U  |\nabla \Psi|^2dx
+\phi\int_U (u^2 + |\Psi_t|^2+\Psi^2)dx.
\end{align*}

Then integrating from $0$ to $T$ gives
\beq
\begin{aligned}\label{KP}
 \intTU \KK|\hh|^2 dx dt
 &\le  C \chi_1^2 \phi \|\bar u_0\|_{L^2}^2 +C\chi_1^{2(2+a)}\MZ^2\intTU u^4 dxdt+C\phi \chi_1^2\intTU u^2 dxdt\\
&\quad  +C\Ecal_0.
 \end{aligned}
\eeq

For the left-hand side of \eqref{KP}, we use inequalities \eqref{ee6} and \eqref{Wbound} to have 
\begin{align*}
\KK|\hh|^{2}=\KK|\nabla u+\ZZ|^2\ge 2^{-1}\KK |\nabla u|^2-\KK|\ZZ|^2\ge 2^{-1} \KK|\nabla u|^2-\MZ^2 u^4.
\end{align*}

Combining this with \eqref{KP} yields 
\begin{align*}
 \intTU \KK|\nabla u|^2 dx
 &\le  2\intTU \KK|\hh|^2 dx dt + 2\MZ^2\intTU u^4 dxdt\\
 &\le  C \chi_1^2 \phi \|\bar u_0\|_{L^2}^2 +C(1+\chi_1)^{2(2+a)}\MZ^2\intTU u^4 dxdt\\
 &\quad+C\phi\chi_1^2\intTU u^2 dxdt
  +C\Ecal_0,
 \end{align*}
which proves \eqref{gradu0}.
 
Using \eqref{kug2} and \eqref{Wbound}, we estimate the integrand on the left-hand side of \eqref{gradu0} by
  \begin{align*}
  \KK|\nabla u|^2\ge 3^{-1}|\nabla u|^{2-a} -(1+|\ZZ|^2)\ge 3^{-1}|\nabla u|^{2-a} -(1+(u^2\MZ)^2). 
  \end{align*}
  It results in
\begin{align*}
 \intTU |\nabla u|^{2-a} dxdt
 &\le  3\intTU \KK|\nabla u|^2 dx + 3\intTU (1+\MZ^2 u^4) dxdt\\
 &\le  C \chi_1^2 \phi (\|\bar u_0\|_{L^2}^2+\|u\|_{L^2(U\times (0,T))}^2) \\
 &\quad+C(1+\chi_1)^{2(2+a)}\intTU (1+\MZ^2 u^4) dxdt+ C\Ecal_0,
 \end{align*}
which proves  \eqref{gradu1}.
 \end{proof}

To have more specific estimates, we examine the bounds for the constituents of the PDE in \eqref{ubar}.
Note, from \eqref{Zx} and \eqref{Jxineq},  that  
\beqs
|\mathcal Z(x,t)|
\le \mathcal G + \Omega^2 |\Jm^2 x|
\le \mathcal G + \Omega^2 r_0
=\mathcal G(1+\Omega_*^2),
\eeqs
where 
\beq \label{r0}
r_0=\max\{|x|: x\in \bar U \}
\text{ and }
\Omega_*=\Omega \sqrt{r_0/\mathcal G}=\tilde\Omega\sqrt{ r_0/\tilde{\mathcal G}}.
\eeq 
Thus, the number $\MZ$ in \eqref{MZ} can be bounded by
\beq\label{MZ1}
 \MZ\le \mathcal G(1+\Omega_*)^2.
\eeq

Next, it is obvious that 
$D\mathcal Z(x,t)=\Omega^2 \Jm^2$,
hence, by \eqref{J2norm},
\beq\label{DZ}
|D\mathcal Z(x,t)|=\muz\eqdef \sqrt 2 \Omega^2=\sqrt 2(\mathcal G/r_0) \Omega_*^2.
\eeq

We rewrite $\mathcal R$ in \eqref{Rdef} as
\beq\label{RR1}
\mathcal R=\frac{2\rho_* \Omega}{\phi}
=\frac{2\rho_* \sqrt{\mathcal G/r_0}}{\phi} \Omega_*
=\frac{2\rho_* \sqrt{ \tilde{\mathcal G}/r_0}}{\tilde\phi} \Omega_*.
\eeq

From \eqref{MZ1}, \eqref{DZ} and \eqref{RR1}, we conveniently relate the upper bounds of $\mathcal R$, $\MZ$, $\muz$ to a single parameter $\chi_*$ as follows
\beq\label{RMm}
\mathcal R\le \chi_*\eqdef \max\{1,d_*(1+\Omega_*)\},\quad \MZ\le \chi_*^2,\quad \muz\le \chi_*^2,
\eeq
where
\beqs
d_*
=\sqrt{\mathcal G/r_0}\max\Big\{\frac{2\rho_*}{\phi},\sqrt{r_0},2^{1/4}\Big\}.
\eeqs

The reason for \eqref{RMm}, with the choice of $\chi_*\ge 1$, is to simplify many estimates that will be obtained later, and specify the dependence of those estimates on the parameters $d_*$ and $\Omega_*$.

\begin{remark}
A very common situation is that the rotation is about the vertical axis, then $\eR=\pm (0,0,1)$, and
\beq\label{Jmatrix}
\Jm=\pm \begin{pmatrix}   0 & -1 & 0\\ 1 & 0 & 0 \\ 0& 0 & 0 \end{pmatrix},\quad 
\Jm^2=-\begin{pmatrix}
     1&0 & 0\\
     0&1&0\\
     0& 0&0
    \end{pmatrix}.
\eeq
Thanks to \eqref{Jmatrix}, we can, in this case, replace the $r_0$ in \eqref{r0} with a smaller number, namely, 
$$r_0=\max\{(x_1^2+x_2^2)^{1/2}: x=(x_1,x_2,x_3)\in \bar U \}.$$
\end{remark}

\begin{definition}\label{BEN}
 We will use the following quantities in our estimates throughout the paper:
\begin{align*}
  M_*&=\sup_{ U\times[0,T]} |u|, \quad
\Ecal_*=\intTU (|\nabla \Psi|^2+\phi |\Psi_t|^2+\phi \Psi^2)dxdt,\\
\Ncal_0&=\phi \|\bar u_0\|_{L^2}^2 + TM_*^2+\Ecal_*,\quad
\Ncal_*=\phi\|\bar u_0\|_{L^2}^2+T+ \Ecal_*,\\
\Ncal_2&=\phi( \|\bar u_0\|_{L^2}^2+\|\nabla u_0\|_{L^2}^2)+T+ \Ecal_*,\\
\Ncal_s&=\phi( \|\bar u_0\|_{L^2}^2+\|\nabla u_0\|_{L^2}^2+\|\nabla u_0\|_{L^s}^s)+T+ \Ecal_* \text{ for $s>2$.} 
\end{align*}
\end{definition}

The estimates obtained in the rest of this paper will depend on the quantities in Definition \ref{BEN}. Among those, only $M_*$ still depends on the solution $u$. 
However, this quantity can be bounded in terms of initial and boundary data by using different techniques.
For instance, in our original problem, $u=\rho/\kappa\ge 0$, hence we have from Corollary \ref{maxcor} that
$M_*\le C M_0(T)$, see \eqref{uM}.
Therefore, in the following, we say ``the estimates are expressed in terms of the initial and boundary data'' even when they contain $M_*$.

\medskip
As stated in section \ref{intro}, we will keep tracks of the dependence on certain physical parameters, particularly, the angular speed. Note, by \eqref{defxione} and \eqref{RMm}, that 
\beq\label{xichi}
\chi_1\le C\chi_*.
\eeq
With this and the fact $\chi_*\ge 1$, we compare $\Ecal_0$ in \eqref{Estar} with $\Ecal_*$ by
\beq\label{EEs}
\Ecal_0\le C\chi_*^{2(2+a)}\Ecal_*.
\eeq
It is clear that
\begin{align}
\label{NNN0}
\Ecal_*&\le \Ncal_0\le (M_*+1)^2 \Ncal_*,\\ 
\label{NNNs}
\Ecal_*&\le \Ncal_*\le \Ncal_2\le \Ncal_s \text{ for $s>2$.} 
\end{align}

\begin{theorem}\label{L2apos} 
One has
\begin{align}
\label{gradu2}
 \intTU \KK|\nabla u|^2 dxdt&\le C \Big\{\chi_*^2\phi \|\bar u_0\|_{L^2}^2 
 + \chi_*^{2(4+a)}TM_*^2(M_*+1)^2+\chi_*^{2(2+a)}\Ecal_*\Big\},\\
 \label{gradu3}
 \intTU |\nabla u(x,t)|^{2-a}dxdt &\le C\Big\{ \chi_*^2\phi \|\bar u_0\|_{L^2}^2 
 + \chi_*^{2(4+a)}T(M_*+1)^4+\chi_*^{2(2+a)}\Ecal_*\Big\}.
\end{align}
Consequently, the following more concise estimates hold
\beq 
\label{gradu4}
 \intTU \KK|\nabla u|^2 dxdt\le C\chi_*^{2(4+a)}(M_*+1)^2 \Ncal_0,
\eeq
\beq 
\label{gradu6a}
 \intTU \KK|\nabla u|^2 dxdt \le C\chi_*^{2(4+a)}(M_*+1)^4 \Ncal_*,
\eeq
\beq
\label{gradu6b}
 \intTU |\nabla u(x,t)|^{2-a}dxdt \le C\chi_*^{2(4+a)}(M_*+1)^4 \Ncal_*.
\eeq
\end{theorem}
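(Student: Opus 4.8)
The plan is to start from Proposition \ref{L2a}, whose right-hand side already controls the two target integrals $\intTU \KK|\nabla u|^2\,dxdt$ and $\intTU |\nabla u|^{2-a}\,dxdt$, and to replace every quantity there that is not yet in the desired ``data'' form by the physical parameters introduced afterwards. Concretely, I would invoke \eqref{gradu0} and \eqref{gradu1}, and then feed in three substitutions: the bound $\MZ\le \chi_*^2$ from \eqref{RMm}, the comparison $\chi_1\le C\chi_*$ from \eqref{xichi} (so that $(1+\chi_1)^{2(2+a)}\le C\chi_*^{2(2+a)}$ and $\chi_1^2\le C\chi_*^2$ since $\chi_*\ge 1$), and the comparison $\Ecal_0\le C\chi_*^{2(2+a)}\Ecal_*$ from \eqref{EEs}. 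The remaining solution-dependent pieces $\|u\|_{L^2(U\times(0,T))}^2$ and $\|u\|_{L^4(U\times(0,T))}^4$ are estimated crudely by $M_*$: since $|u|\le M_*$ on $U\times[0,T]$, one has $\|u\|_{L^2(U\times(0,T))}^2\le |U|\,T M_*^2\le CTM_*^2$ and $\|u\|_{L^4(U\times(0,T))}^4\le CTM_*^4$.

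Carrying this out for \eqref{gradu0}, the three terms become $C\chi_*^2\phi(\|\bar u_0\|_{L^2}^2 + TM_*^2)$, $C\chi_*^{2(2+a)}\cdot\chi_*^4\cdot TM_*^4 = C\chi_*^{2(4+a)}TM_*^4$, and $C\chi_*^{2(2+a)}\Ecal_*$. One must be slightly careful collating the $\phi TM_*^2$ term: bound $\chi_*^2\phi TM_*^2 \le \chi_*^{2(4+a)}TM_*^2(M_*+1)^2$ using $\phi\le C$ (this is the constant convention: $C$ is allowed to depend on $U$ but $\phi$ is a genuine parameter, so one should instead absorb $\phi$ by noting $\phi\,T M_*^2$ already appears inside $\Ncal_0$, or simply keep the $\phi$ and observe that in the final concise form \eqref{gradu4} the term $\phi TM_*^2$ is exactly the $TM_*^2$ contribution to $\phi\Ncal_*$ after multiplying by $M_*+1$ factors). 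This yields \eqref{gradu2}; the analogous computation on \eqref{gradu1}, where the extra ``$+T$'' inside the parentheses of the second bracket contributes $C\chi_*^{2(4+a)}T$, yields \eqref{gradu3} after writing $T(1+M_*^4)+TM_*^4\le CT(M_*+1)^4$.

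For the concise estimates \eqref{gradu4}, \eqref{gradu6a}, \eqref{gradu6b}, the strategy is to recognize the groupings of Definition \ref{BEN}. In \eqref{gradu2} the bracket is $\le C\chi_*^{2(4+a)}\{\phi\|\bar u_0\|_{L^2}^2 + TM_*^2(M_*+1)^2 + \Ecal_*\}$ after pulling $\chi_*^{2(4+a)}$ out front and using $\chi_*\ge 1$; then bound $\phi\|\bar u_0\|_{L^2}^2 + \Ecal_* \le (M_*+1)^2(\phi\|\bar u_0\|_{L^2}^2 + TM_*^2 + \Ecal_*) = (M_*+1)^2\Ncal_0$, while $TM_*^2(M_*+1)^2 \le (M_*+1)^2\Ncal_0$ as well, giving \eqref{gradu4}. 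Then \eqref{gradu6a} follows from \eqref{gradu4} by \eqref{NNN0}, i.e.\ $\Ncal_0\le (M_*+1)^2\Ncal_*$. For \eqref{gradu6b}, apply the same grouping to \eqref{gradu3}: $\phi\|\bar u_0\|_{L^2}^2 + \Ecal_* + T(M_*+1)^4 \le (M_*+1)^4(\phi\|\bar u_0\|_{L^2}^2 + T + \Ecal_*) = (M_*+1)^4\Ncal_*$.

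I do not anticipate a genuine obstacle here — the theorem is essentially a bookkeeping repackaging of Proposition \ref{L2a}. The only place requiring care is the handling of the dimensionless parameter $\phi$ versus the generic constant $C$: per the paper's Notation, $C$ is \emph{not} allowed to depend on $\phi$, so one cannot simply ``absorb $\phi\le C$.'' The clean way is to leave $\phi$ attached to $\|\bar u_0\|_{L^2}^2$ and to the $TM_*^2$ term (it is already attached there in $\Ncal_0$) throughout, and to note that the $\Ecal_*$ and $\Ecal_0$ terms already carry their own $\phi$'s on the $\Psi_t$ and $\Psi$ parts; the $|\nabla\Psi|^2$ part has no $\phi$, which is consistent with the definitions. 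So the write-up must track which summands are multiplied by $\phi$ and keep them so, rather than lumping everything under $C$.
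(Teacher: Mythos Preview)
Your approach is correct and essentially identical to the paper's: feed \eqref{xichi}, \eqref{EEs}, \eqref{RMm}, and the crude bounds $\|u\|_{L^p(U\times(0,T))}^p\le CTM_*^p$ into \eqref{gradu0}--\eqref{gradu1}, then regroup via Definition~\ref{BEN} and \eqref{NNN0}. One small correction to your $\phi$-discussion: $\Ncal_0$ contains $TM_*^2$, \emph{not} $\phi TM_*^2$, so your proposed ``attach $\phi$ to $TM_*^2$ inside $\Ncal_0$'' does not work; the paper simply drops the $\phi$ on that term when passing from $C\chi_*^2\phi TM_*^2$ to the $TM_*^2(M_*+1)^2$ contribution in \eqref{gradu2}, which is legitimate under the physically natural (and implicitly used) bound $\phi=\kappa\tilde\phi\le 1$.
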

\begin{proof}
 By the definition of $M_*$, we obviously have
\beq
|u|\le M_*\text{ on } \bar U\times[0,T],
\eeq
hence,
\beq\label{uu}
\|u\|_{L^2(U\times (0,T))}^2\le CTM_*^2\quad\text{and}\quad
\|u\|_{L^4(U\times (0,T))}^4\le CTM_*^4.
\eeq
Using \eqref{uu}, the relations \eqref{xichi}, \eqref{EEs}, and estimate of $\MZ$ in \eqref{RMm} for the right-hand side of \eqref{gradu0}, we have
\begin{align*}
 \intTU \KK|\nabla u|^2 dxdt
 &\le C\chi_*^2 \phi (\|\bar u_0\|_{L^2}^2 +TM_*^2) +C(1+\chi_*)^{2(2+a)}\cdot \chi_*^4\cdot  T M_*^4 +C\chi_*^{2(2+a)}\Ecal_*\\
 &\le C\chi_*^2 \phi \|\bar u_0\|_{L^2}^2 +C\chi_*^{2(2+a)+4} T M_*^2  (1+M_*^2) +C\chi_*^{2(2+a)}\Ecal_*.
\end{align*}
Then \eqref{gradu2} and \eqref{gradu4} follow. From \eqref{gradu4}, we infer \eqref{gradu6a} thanks to the last relation in \eqref{NNN0}.
 
Similarly, we have from  \eqref{gradu1} that 
\beqs
\intTU |\nabla u(x,t)|^{2-a}dxdt \le C\Big\{ \chi_*^2\phi (\|\bar u_0\|_{L^2}^2+TM_*^2)+\chi_*^{2(2+a)}(T+ \chi_*^4 \cdot T M_*^4) +\chi_*^{2(2+a)}\Ecal_*\Big\}.
\eeqs 
Then \eqref{gradu3} and \eqref{gradu6b} follow.
\end{proof}

We remark that while \eqref{gradu3} gives a direct estimate for the $L^{2-a}$-norm, the alternative estimate in form of \eqref{gradu2}  prepares for the iterations in section \ref{gradsec} below.

\begin{remark}\label{smlrmk1}
From the point of view of pure PDE estimates, the right-hand side of \eqref{gradu4} can be small, for a fixed $T>0$, while the right-hand side of \eqref{gradu6a}
and  \eqref{gradu6b} cannot. It is because $|u|$, $|\bar u_0|$, and $\Ecal_*$ being small will make $\Ncal_0$, but not $\Ncal_*$, small.
\end{remark}

\section{Gradient estimates (II)} \label{gradsec}

In this section, we establish the interior
$L^s$-estimate of $\nabla u $ for all $s > 0$.

For the remainder of this paper, $\zeta$ always denotes a function in $C^2_c(U\times [0,T])$ that satisfies
$0\le \zeta(x,t)\le 1$ for all $(x,t)$. 
When such a function is specified, the quantity $\mathcal D_s$ is defined, for $s\ge 0$, by
\beq\label{Ds}
\mathcal D_s= \int_U|\nabla u_0(x)|^{2s+2}\zeta^2(x,0) dx.
\eeq

The next two lemmas \ref{lem61} and \ref{lem62} are the main technical steps for the later iterative estimates of the gradient.

\begin{lemma} \label{lem61}
For any $s\ge 0$, one has
\beq\label{iterate1}
\begin{aligned}
&\phi \sup_{t\in[0,T]}\int_U |\nabla u(x,t)|^{2s+2} \zeta^2(x,t) dx,\\
&(s+1)c_8\chi_1^{-2}\intTU \KK  |D^2 u|^2   |\nabla u|^{2s}  \zeta^2  dx dt\le \phi \mathcal D_s +CI_0, 
\end{aligned}
\eeq
where 
\begin{align*}
I_0&=\muz^2 (1+\chi_1)^{2(1+a)}\intTU \KK  |\nabla u|^{2s} u^4   \zeta^2 dx dt \\
&\quad + (1+\chi_1)^{2(1+a)}\intTU \KK   |\nabla u|^{2s+2} (\MZ^2 u^2\zeta^2 + |\nabla \zeta|^2)   dx dt + \intTU |\nabla u|^{2s+2} \zeta|\zeta_t| dx dt.
\end{align*}
Consequently,
\begin{align}
 \label{irat0}
\phi \sup_{t\in[0,T]}\int_U |\nabla u(x,t)|^{2s+2} \zeta^2(x,t) dx
&\le \phi \mathcal D_s +C J_0,\\
\label{iterate4}
\intTU \KK  |D^2 u|^2   |\nabla u|^{2s}  \zeta^2  dx dt
&\le C \chi_*^2 (\phi \mathcal D_s+J_0),
\end{align}
where 
\begin{align*}
J_0
&=\chi_*^{2(3+a)}\widetilde M^4\intTU \KK  |\nabla u|^{2s}  \zeta^2 dx dt \\
&\quad +  \chi_*^{2(3+a)}(\widetilde M+1)^2\intTU \KK   |\nabla u|^{2s+2} (\zeta^2 + |\nabla \zeta|^2)   dx dt
 + \intTU |\nabla u|^{2s+2} \zeta|\zeta_t| dx dt,
\end{align*}
with 
\beq\label{Mtil}
\widetilde M=\sup\{ |u(x,t)|:(x,t)\in \supp\zeta\}.
\eeq
\end{lemma}
\begin{proof} 
For each $j=1,2,3$ and $t\in(0,T)$, let $\{\varphi_n(x)\}_{n=1}^\infty$ be a sequence in $C_c^\infty(U)$ that approximates $|\nabla u(x,t)|^{2s} \partial_j u(x,t)\zeta^2(x,t)$ in $W_0^{1,2}(U)$. Multiplying equation \eqref{ueq} by $-\partial_j \varphi_n(x)$, integrating the resulting equation over $U$, and using the integration by parts twice for the right-hand side give
\begin{align*}
-\phi\int_U \frac{\partial u}{\partial t} \partial_j\varphi_n dx
&=  -\int_U \frac{\partial}{\partial x_i}\Big[ X_i(\hh(x,t)) \Big] \partial_j \varphi_n dx
=\int_U X_i(\hh(x,t))  \partial_i \partial_j \varphi_n dx\\
&=  -\int_U \frac{\partial}{\partial x_j}\Big[ X_i(\hh(x,t)) \Big] \partial_i \varphi_n dx.
\end{align*}

Passing $n\to\infty$ and summing in $j$ yield
\begin{align*}
-\phi\int_U \frac{\partial u}{\partial t}\nabla\cdot ( |\nabla u|^{2s} \nabla u\zeta ^2)dx
&=  -\int_U \frac{\partial}{\partial x_j}\Big[ X_i(\hh(x,t)) \Big] \partial_i (|\nabla u|^{2s} \partial_j u \zeta^2) dx.
\end{align*}

Performing integration by parts again for the left-hand side, we obtain
\beq\label{esti}
\begin{aligned}
& \frac{\phi}{2s+2} \ddt \int_U |\nabla u|^{2s+2} \zeta^2 dx\\
&= -\int_U \frac{\partial}{\partial x_j}\Big[ X_i(\hh(x,t)) \Big] \partial_i (|\nabla u|^{2s} \partial_j u \zeta^2) dx+\frac{1}{s+1}\int_U |\nabla u|^{2s+2} \zeta\zeta_t dx\\
&=I_1+I_2+I_3+I_4,
\end{aligned}	
\eeq
where
\begin{align*}
 I_1&= -\int_U \frac{\partial}{\partial x_j}\Big[ X_i(\hh(x,t)) \Big]  \partial_j \partial_i u  |\nabla u|^{2s}  \zeta^2 dx,\\
I_2&= -2\int_U \frac{\partial}{\partial x_j}\Big[ X_i(\hh(x,t)) \Big]  \partial_j u \, |\nabla u|^{2s} \, \zeta \partial_i  \zeta dx,\\
I_3&= -2s\int_U \frac{\partial}{\partial x_j}\Big[ X_i(\hh(x,t)) \Big] \partial_j u   \, (|\nabla u|^{2s-2} \partial_i\partial_l u\partial_l u)\, \zeta^2 dx,\\
I_4&=\frac{1}{s+1} \int_U|\nabla u|^{2s+2} \zeta\zeta_t dx.
\end{align*}

For $i,j=1,2,3$, denote $X'_{ij}(y)=\partial X_i(y)/\partial y_j$. By the second inequality of \eqref{Xprime},
\beq\label{XpPh}
|X'_{ij}(\hh(x,t))|\le C(1+\chi_1)^a\KK(x,t).
\eeq

Let $I_5=\int_U \KK |D^2 u|^2   |\nabla u|^{2s}  \zeta^2 dx$.

\medskip
\noindent\textit{Estimation of $I_1$.} We have
\begin{align*}
I_1
&= -\int_U X'_{im}(\hh) ( \partial_m\partial _j u + \partial _j \mathcal Z_m u^2 + 2u \partial_j u \mathcal Z_m )    \partial_j \partial_i u  |\nabla u|^{2s}  \zeta^2 dx\\
&=I_{1,1}+I_{1,2}+I_{1,3},
\end{align*}	
where
\begin{align*}
I_{1,1}&=-\int_U D(\partial_j u)  X'(\hh) \nabla (\partial_j u )  |\nabla u|^{2s}  \zeta^2 dx,\\
I_{1,2}&=-\int_U X'_{im}(\hh)  \partial _j \mathcal Z_m u^2   \partial_j \partial_i u  |\nabla u|^{2s}  \zeta^2 dx,\\
I_{1,3}&=-2 \int_U X'_{im}(\hh) u \partial_j u \mathcal Z_m     \partial_j \partial_i u  |\nabla u|^{2s}  \zeta^2 dx.
\end{align*}

To estimate $I_{1,1}$, by applying \eqref{hXh} to $y=\hh$, $\xi=\nabla(\partial_j u)$, we have
\begin{align*}
 I_{1,1}
&\le - c_8\chi_1^{-2} \int_U \KK  \Big( \sum_{j=1}^n|\nabla(\partial_j u)|^2 \Big)  |\nabla u|^{2s}  \zeta^2 dx
=-c_8\chi_1^{-2} I_5. 
 \end{align*}
 
To estimate $I_{1,2}$, using  inequality \eqref{XpPh} to estimate $|X'_{im}(\hh)|$, identity \eqref{DZ} for $|\partial_j\mathcal Z_m|$, and then applying the Cauchy inequality to $u^2  |D^2 u |$, we obtain
\begin{align*}
|I_{1,2}|&\le C \int_U  (1+\chi_1)^a \KK  \muz u^2  |D^2 u | |\nabla u|^{2s}  \zeta^2 dx\\
&\le \varep I_5+ C\varep^{-1}(1+\chi_1)^{2a} \muz^2 \int_U \KK  u^4   |\nabla u|^{2s} \zeta^2 dx  .
\end{align*}

Similarly, we estimate $I_{1,3}$ by 
\begin{align*}
|I_{1,3}|&\le C(1+\chi_1)^a \int_U  \KK  \MZ u  |D^2 u | |\nabla u|^{2s+1}  \zeta^2 dx\\
&\le \varep I_5+ C\varep^{-1}(1+\chi_1)^{2a} \MZ^2\int_U \KK  u^2   |\nabla u|^{2s+2} \zeta^2 dx  .
\end{align*}

Summing up, we obtain 
\beq\label{estione}
\begin{aligned}
I_1&\le (2\varep -c_8\chi_1^{-2} )I_5+ C\varep^{-1}(1+\chi_1)^{2a} \muz^2 \int_U \KK  u^4   |\nabla u|^{2s} \zeta^2 dx\\
&\quad + C\varep^{-1}(1+\chi_1)^{2a} \MZ^2\int_U \KK  u^2   |\nabla u|^{2s+2} \zeta^2 dx.  
\end{aligned}	
\eeq

\medskip
\noindent\textit{Estimation of $I_2$.} We calculate
\begin{align*}
I_2
&=-2\int_U X'_{im}(\hh)\Big ( \partial_m\partial _j u + \partial _j \mathcal Z_m u^2 + 2u \partial_j u \mathcal Z_m\Big )   \partial_j u \, |\nabla u|^{2s} \, \zeta \partial_i  \zeta dx\\
&=I_{2,1}+I_{2,2}+I_{2,3},
\end{align*}
where the integral is split along the sum $\partial_m\partial _j u + \partial _j \mathcal Z_m u^2 + 2u \partial_j u \mathcal Z_m$.

Using \eqref{XpPh} and Cauchy's inequality gives 
\begin{align*}
|I_{2,1}| &\le C(1+\chi_1)^a\int_U  \KK  |D^2 u||\nabla u|^{2s+1} \zeta |\nabla  \zeta| dx\\
&\le \varep I_5+C\varep^{-1}(1+\chi_1)^{2a} \int_U \KK  |\nabla u|^{2s+2}  |\nabla  \zeta|^2 dx,
\end{align*}

Using \eqref{XpPh}, \eqref{DZ} and \eqref{MZ}, we obtain
\begin{align*}
|I_{2,2}| &\le C(1+\chi_1)^a\int_U  \KK |D\mathcal Z| u^2|\nabla u|^{2s+1} \zeta |\nabla  \zeta| dx\\
&\le C(1+\chi_1)^a\muz \int_U  \KK u^2|\nabla u|^{2s+1} \zeta |\nabla  \zeta| dx,
\end{align*}
and
\begin{align*}
|I_{2,3}| &\le C(1+\chi_1)^a\int_U \KK |\mathcal Z| u|\nabla u|^{2s+2} \zeta |\nabla  \zeta| dx\\
&\le C(1+\chi_1)^a\MZ\int_U \KK  u|\nabla u|^{2s+2} \zeta |\nabla  \zeta| dx.
\end{align*}

Thus, we have
\beq\label{estitwo}
\begin{aligned}
&I_2\le \varep I_5+C\varep^{-1}(1+\chi_1)^{2a} \int_U \KK  |\nabla u|^{2s+2}  |\nabla  \zeta|^2 dx\\
&\quad+C(1+\chi_1)^a\muz \int_U  \KK u^2|\nabla u|^{2s+1} \zeta |\nabla  \zeta| dx+C(1+\chi_1)^a\MZ\int_U \KK  u|\nabla u|^{2s+2} \zeta |\nabla  \zeta| dx.
\end{aligned}
\eeq

\medskip
\noindent\textit{Estimation of $I_3$.} Using similar calculations to those for $I_2$, we have 
\begin{align*}
I_3
&=-2s\int_U X'_{mi}(\hh) \Big( \partial_m\partial _j u + \partial _j \mathcal Z_m u^2 + 2u \partial_j u \mathcal Z_m \Big)
\cdot \partial_j u   \, (|\nabla u|^{2s-2} \partial_i\partial_l u\partial_l u)\,     \zeta^2 dx\\
&=I_{3,1}+I_{3,2}+I_{3,3}.
\end{align*}
where the integral, again,  is split along the sum $\partial_m\partial _j u + \partial _j \mathcal Z_m u^2 + 2u \partial_j u \mathcal Z_m$.

Rewriting $I_{3,1}$ and applying \eqref{hXh} to $y=\hh$ and $\xi=\frac{1}2 \nabla( |\nabla u|^2)$, we have
\begin{align*}
I_{3,1}
&=-2s\int_U X'_{im}(\hh)\cdot  [ \partial_m\partial _j u \cdot \partial_j u ] \cdot[\partial_i\partial_l \cdot u\partial_l u] \cdot |\nabla u|^{2s-2}\,     \zeta^2 dx\\
&=-2s\int_U X'_{im}(\hh)  \Big [\frac{1}2 \partial _m  |\nabla u|^2\Big ]  \Big[\frac{1}2 \partial_i |\nabla u|^2)\Big]\cdot  |\nabla u|^{2s-2}     \zeta^2 dx\\
&\le -2s\, c_8 \chi_1^{-2} \int_U  \KK  \Big|\frac{1}2\nabla  (|\nabla u|^2)\Big|^2   |\nabla u|^{2s-2}      \zeta^2 dx \le 0. 
\end{align*}

We estimate $I_{3,2}$ and $I_{3,3}$ similarly to $I_{2,1}$, $I_{2,2}$, $I_{2,3}$, and obtain
\begin{align*}
|I_{3,2}|&\le C(1+\chi_1)^a\int_U \KK  |D\mathcal Z| u^2    \, |\nabla u|^{2s} \, |D^2 u|    \zeta^2 dx\\
&\le \varep I_5 + C\varep^{-1}(1+\chi_1)^{2a} \muz^2 \int_U \KK  u^4    \, |\nabla u|^{2s}    \zeta^2 dx,
\end{align*}
and 
\begin{align*}
|I_{3,3}|&\le C(1+\chi_1)^a \MZ\int_U \KK  u    \, |\nabla u|^{2s+1} \, |D^2 u|   \zeta^2 dx\\
&\le \varep I_5 + C\varep^{-1} (1+\chi_1)^{2a} \MZ^2\int_U \KK  u^2    \, |\nabla u|^{2s+2}    \zeta^2 dx.
\end{align*}

Therefore,
\beq\label{estithree}
\begin{aligned}
I_3&\le  2\varep I_5+ C\varep^{-1}(1+\chi_1)^{2a} \muz^2 \int_U \KK  u^4    \, |\nabla u|^{2s}    \zeta^2 dx\\
&\quad + C\varep^{-1} (1+\chi_1)^{2a} \MZ^2\int_U \KK  u^2    \, |\nabla u|^{2s+2}    \zeta^2 dx.
\end{aligned}
\eeq

Combining \eqref{esti} with the estimates \eqref{estione}, \eqref{estitwo} and \eqref{estithree}, we have 
\begin{align*}
 &\frac{\phi}{2s+2} \ddt \int_U |\nabla u|^{2s+2} \zeta^2 dx +(c_8\chi_1^{-2}-5\varep)I_5 
 \le  C\varep^{-1}(1+\chi_1)^{2a}\muz^2 \int_U \KK  u^4   |\nabla u|^{2s} \zeta^2 dx  \\
 & +C\varep^{-1} (1+\chi_1)^{2a}\MZ^2\int_U \KK  u^2   |\nabla u|^{2s+2} \zeta^2 dx  
 +C\varep^{-1}(1+\chi_1)^{2a} \int_U \KK |\nabla u|^{2s+2}  |\nabla  \zeta|^2 dx\\
 & +C(1+\chi_1)^a\muz\int_U \KK |u|^2|\nabla u|^{2s+1} \zeta |\nabla  \zeta| dx
 + C(1+\chi_1)^a\MZ \int_U \KK |u||\nabla u|^{2s+2} \zeta |\nabla  \zeta| dx 
 +I_4.
\end{align*}

Using the Cauchy inequality, we have, for the fourth integral on the right-hand side,
\beqs
\begin{split}
(1+\chi_1)^a\muz |u|^2|\nabla u|^{2s+1} \zeta |\nabla  \zeta|
&\le \frac 1 2 \left [\varep^{-1}(1+\chi_1)^{2a}\muz^2  u^4   |\nabla u|^{2s} \zeta^2+ \varep |\nabla u|^{2s+2}  |\nabla  \zeta|^2  \right],
\end{split}
\eeqs
and, for the fifth integral on the right-hand side,  
\beqs
\begin{split}
(1+\chi_1)^a\MZ|u| \zeta |\nabla  \zeta|
&\le \frac 1 2\left[ \varep^{-1} (1+\chi_1)^{2a}\MZ^2 u^2\zeta^2+ \varep  |\nabla  \zeta|^2 \right].
\end{split}
\eeqs

Therefore, we obtain
\beq \label{d10}
\begin{aligned}
& \frac{\phi}{2s+2} \ddt \int_U |\nabla u|^{2s+2} \zeta^2 dx +(c_8\chi_1^{-2}-5\varep)I_5 \\
& \le  C\varep^{-1}(1+\chi_1)^{2a}\muz^2 \int_U \KK  |\nabla u|^{2s} u^4   \zeta^2 dx  
   +C\varep^{-1} (1+\chi_1)^{2a}\MZ^2\int_U \KK  u^2   |\nabla u|^{2s+2} \zeta^2 dx  \\
&\quad  +C(\varep^{-1}(1+\chi_1)^{2a}+\varep) \int_U \KK |\nabla u|^{2s+2}  |\nabla  \zeta|^2 dx
  + C \int_U |\nabla u|^{2s+2} \zeta|\zeta_t| dx. 
\end{aligned} 
\eeq 

Choosing $\varep=c_8\chi_1^{-2}/10$, and integrating \eqref{d10} in time, we get 
$$\phi \sup_{t\in[0,T]}\int_U |\nabla u(x,t)|^{2s+2} \zeta^2(x,t) dx \text{ and } (s+1)c_8\chi_1^{-2}\intTU \KK  |D^2 u|^2   |\nabla u|^{2s}  \zeta^2  dx dt$$
are bounded from above by  
\begin{align*}
&\phi \int_U |\nabla u_0(x)|^{2s+2} \zeta^2(x,0) dx 
+C\muz^2 (1+\chi_1)^{2(1+a)}\intTU \KK  |\nabla u|^{2s} u^4   \zeta^2 dx dt \\
&\quad + C ((1+\chi_1)^{2(1+a)}+\varep)\intTU \KK   |\nabla u|^{2s+2} (\MZ^2 u^2\zeta^2 + |\nabla \zeta|^2)   dx dt \\
&\quad + C\intTU |\nabla u|^{2s+2} \zeta|\zeta_t| dx dt.
\end{align*}
Then using the fact $\varep\le c_8 \chi_0^{-2}/10\le C$ for the last $\varep$, we obtain \eqref{iterate1}.

We now estimate $I_0$ further. We use \eqref{RMm}, \eqref{xichi} to estimate $\MZ$, $\muz$, $\chi_1$, and note, by \eqref{Mtil}, that 
$u^m \zeta^2\le \widetilde M^m \zeta^2$ for $m=2,4$.
With these estimates, we have 
\beq \label{JJ0}I_0\le C J_0.
\eeq 
Hence, \eqref{irat0} directly follows the first estimate \eqref{iterate1}.

Similarly,  multiplying the second estimate in \eqref{iterate1} by $(s+1)^{-1}c_8^{-1}\chi_1^2$, then using  \eqref{xichi} and \eqref{JJ0}, we obtain \eqref{iterate4}.
\end{proof}

Next, we combine Lemma \ref{lem61} with the embedding in Theorem \ref{LUembed} to derive a bootstrapping estimate.

\begin{lemma}\label{lem62}
If $s\ge 0$ then 
\beq \label{Kug3}
\intTU \KK  |\nabla u|^{2s+4} \zeta^2 dxdt+M_*^2 \intTU \KK  |D^2 u|^2   |\nabla u|^{2s}  \zeta^2  dx dt
\le  C(I_*+J_*),
\eeq
where 
\begin{align*}
I_*&= \chi_*^2M_*^2 \phi \mathcal D_s + T\chi_*^{4(2s+3)}M_*^6 (M_*+1)^{8s+6} \\
&\quad +  \chi_*^{2(4+a)}M_*^2(M_*+1)^4\intTU \KK   |\nabla u|^{2s+2} (\zeta^2 + |\nabla \zeta|^2)   dx dt\\
&\quad  + {\rm sgn}(s)M_*^2 \intTU \KK  |D^2 u|^2 \zeta^2 dxdt,
\end{align*}
and
\begin{align*}
J_*&= T\chi_*^{4(1+a)}M_*^4 (M_*+1)^{4a}\sup|\zeta_t|^2\\
&\quad + \chi_*^{4(1+a\cdot{\rm sgn}(s))}M_*^4 (M_*+1)^{4a\cdot{\rm sgn}(s)}\intTU \KK   |\nabla u|^{2s+2} |\zeta_t|^2 dx dt.
\end{align*}
\end{lemma}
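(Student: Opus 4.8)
The plan is to combine the two main technical ingredients already assembled: the differential-inequality estimate \eqref{iterate1}--\eqref{iterate4} from Lemma \ref{lem61}, which controls the time-supremum of $\int |\nabla u|^{2s+2}\zeta^2$ and the weighted second-derivative integral $\intTU \KK |D^2u|^2|\nabla u|^{2s}\zeta^2$; and the Lady\v{z}enskaja--Ural{\cprime}ceva embedding \eqref{ibt} from Theorem \ref{LUembed}, which bounds $\intTU \KK|\nabla u|^{2s+4}\zeta^2$ in terms of $\intTU \KK|D^2u|^2|\nabla u|^{2s}\zeta^2$ (plus lower-order terms). Here the embedding \eqref{ibt} is applied at a.e.\ fixed $t\in[0,T]$ with $w=u(\cdot,t)$, $Q=\mathcal Z(\cdot,t)$, $\zeta=\zeta(\cdot,t)$, and then integrated in $t$; one identifies $K[w,Q]=\KK$, $M_w\le M_*$, $M_Q=\MZ\le\chi_*^2$, $\mu_Q=\muz\le\chi_*^2$, and $|U|\le C$, with $2s+2$ playing the role of $2s$ in \eqref{ibt} (i.e.\ the exponent shift $s\mapsto s+1$).

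**First I would** write \eqref{ibt} with this substitution and integrate in $t$, producing on the left $\intTU \KK|\nabla u|^{2s+4}\zeta^2\,dxdt$ and on the right: (a) $CM_*^2\intTU \KK|D^2u|^2|\nabla u|^{2s}\zeta^2$; (b) a term $CM_*^2\intTU \KK|\nabla u|^{2s+2}(|\nabla\zeta|^2+(M_*\muz+\MZ)^2M_*^2\zeta^2)$, which after \eqref{RMm} becomes $\le C\chi_*^{4}M_*^4(M_*+1)^2\cdot M_*^2\intTU\KK|\nabla u|^{2s+2}(\zeta^2+|\nabla\zeta|^2)$ — note the ${\rm sgn}(s)$ subtlety: when $s=0$ the factor $|\nabla u|^{2s+2}=|\nabla u|^2$ is exactly the quantity in \eqref{gradu2}, no degeneracy issue, but in any case this is absorbed into the $I_*$-term of the same shape; (c) the ${\rm sgn}(s-1)$-term from \eqref{ibt}, which with the shift becomes ${\rm sgn}(s)\,CM_*^2\intTU\KK|D^2u|^2\zeta^2$ — this is the source of the ${\rm sgn}(s)M_*^2\intTU\KK|D^2u|^2\zeta^2$ summand in $I_*$; (d) the pure data term $C|U|(M_*^2\MZ)^{2s+4}(1+M_*^2\MZ)^{2s+2}$, which via $\MZ\le\chi_*^2$ and bookkeeping of powers gives the $T\chi_*^{4(2s+3)}M_*^6(M_*+1)^{8s+6}$ contribution (the extra $T$ coming from the fact that a genuinely $t$-independent bound $C|U|(\cdots)$, once integrated over $[0,T]$, picks up $T$).

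**Next I would** substitute \eqref{iterate4} for the term (a): $M_*^2\intTU\KK|D^2u|^2|\nabla u|^{2s}\zeta^2\le C\chi_*^2M_*^2(\phi\mathcal D_s+J_0)$, and expand $J_0$ using its definition, replacing $\widetilde M$ by $M_*$ (legitimate since $\widetilde M\le M_*$ and all occurrences enter with nonnegative powers). This produces $C\chi_*^2M_*^2\phi\mathcal D_s$, a term $C\chi_*^{2(3+a)+2}M_*^6\intTU\KK|\nabla u|^{2s}\zeta^2$ — which is dominated, after interpolating/absorbing the $|\nabla u|^{2s}$ against $|\nabla u|^{2s+2}$ and $1$ via \eqref{ee5} or simply $|\nabla u|^{2s}\le 1+|\nabla u|^{2s+2}$, with the constant $1$ paying a $T$ factor — a term $C\chi_*^{2(3+a)+2}M_*^2(M_*+1)^2\intTU\KK|\nabla u|^{2s+2}(\zeta^2+|\nabla\zeta|^2)$, and the $\zeta_t$-term $C\chi_*^2M_*^2\intTU|\nabla u|^{2s+2}\zeta|\zeta_t|$. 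This last $\zeta_t$-term is the only one carrying $\KK$-free weight; I would pull out $\zeta|\zeta_t|\le\sup|\zeta_t|\cdot\zeta$ and convert $|\nabla u|^{2s+2}\zeta$ into $\KK|\nabla u|^{2s+2}\zeta\cdot(1+|\hh|)^a$ and apply \eqref{ee3} together with Young to redistribute: this is precisely where the $J_*$ terms (with their $(M_*+1)^{4a}$ and $(M_*+1)^{4a\,{\rm sgn}(s)}$ factors and the $\sup|\zeta_t|^2$ and $\intTU\KK|\nabla u|^{2s+2}|\zeta_t|^2$ pieces) arise, the ${\rm sgn}(s)$ again reflecting that for $s=0$ the relevant power of $|\nabla u|$ is only $2$ and no extra $(M_*+1)^{4a}$ is needed beyond the uniform one. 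Finally, to get $M_*^2\intTU\KK|D^2u|^2|\nabla u|^{2s}\zeta^2$ on the left of \eqref{Kug3} as well, I would simply add a copy of \eqref{iterate4} (multiplied by $M_*^2$) to the inequality obtained for the $|\nabla u|^{2s+4}$-integral; all its right-hand side pieces are already present in $I_*+J_*$.

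**The hard part will be** the careful bookkeeping of the $\chi_*$- and $(M_*+1)$-powers so that everything collapses into exactly the exponents displayed in $I_*$ and $J_*$ — in particular tracking how $\MZ\le\chi_*^2$, $\muz\le\chi_*^2$, $\chi_1\le C\chi_*$ feed through the products $(M_*^2\MZ)^{2s+4}(1+M_*^2\MZ)^{2s+2}$ to yield $\chi_*^{4(2s+3)}M_*^6(M_*+1)^{8s+6}$, and ensuring that every lower-order term genuinely of the form $\intTU\KK|\nabla u|^{2s}\zeta^2$ or $\intTU|\nabla u|^{2s+2}\zeta|\zeta_t|$ is correctly dominated (using $|\nabla u|^{2s}\le 1+|\nabla u|^{2s+2}$, $\KK\le 1$, and $\KK(1+|\hh|)^a=1$ with \eqref{ee3}) by the target $I_*,J_*$ expressions with the stated $T$-factors. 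There is no absorption step here — unlike the proof of Lemma \ref{lem61}, nothing on the right-hand side of \eqref{Kug3} appears also on the left, so the estimate is obtained by direct substitution and simplification; the entire difficulty is combinatorial tracking of constants, which is why the statement warns that the bounds are made ``coherent rather than sharp.''
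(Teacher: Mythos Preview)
Your overall plan is correct and matches the paper: apply \eqref{ibt} with $s\mapsto s+1$ and integrate in $t$ to bound $\alpha\eqdef\intTU\KK|\nabla u|^{2s+4}\zeta^2$ by $CM_*^2\beta+\ldots$ (where $\beta\eqdef\intTU\KK|D^2u|^2|\nabla u|^{2s}\zeta^2$), then insert \eqref{iterate4} for $\beta$, and collect terms. Your identification of where the ${\rm sgn}(s)$-term and the $T\chi_*^{4(2s+3)}M_*^6(M_*+1)^{8s+6}$ data term come from is right.

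The genuine gap is your treatment of $\gamma\eqdef\intTU|\nabla u|^{2s+2}\zeta|\zeta_t|\,dxdt$ and, relatedly, your claim that ``there is no absorption step here.'' In fact the paper's proof \emph{does} absorb: one applies Cauchy's inequality in the form
\[
C\chi_*^2M_*^2\gamma
=\intTU\big(\KK^{1/2}|\nabla u|^{s+2}\zeta\big)\big(C\chi_*^2M_*^2\KK^{-1/2}|\nabla u|^{s}|\zeta_t|\big)\,dxdt
\le \alpha + C\chi_*^4M_*^4\intTU\KK^{-1}|\nabla u|^{2s}|\zeta_t|^2\,dxdt,
\]
then expands $\KK^{-1}|\nabla u|^{2s}=\KK\,|\nabla u|^{2s}\KK^{-2}$ and bounds $\KK^{-2}\le C\big(1+|\nabla u|^{2a}+(M_*^2\chi_*^2)^{2a}\big)$; this is exactly what generates the $|\zeta_t|^2$ (squared) factors and the $(M_*+1)^{4a}$, $(M_*+1)^{4a\cdot{\rm sgn}(s)}$ powers in $J_*$. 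The $\alpha$ produced on the right is then subtracted from the $2\alpha$ carried on the left (this is why the paper first writes the combined bound as $2\alpha+M_*^2\beta\le CI_*+C\chi_*^2M_*^2\gamma$).

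Your alternative---pulling out $\zeta|\zeta_t|\le\sup|\zeta_t|\cdot\zeta$ and writing $|\nabla u|^{2s+2}=\KK|\nabla u|^{2s+2}\KK^{-1}$---gives a factor $\sup|\zeta_t|$ to the first power (not squared) and leaves an intermediate term $\intTU\KK|\nabla u|^{2s+2+a}\zeta$; this neither matches the stated $J_*$ nor avoids a further splitting that reintroduces $\KK|\nabla u|^{2s+4}$ with a single $\zeta$ (not $\zeta^2$), so it cannot be absorbed into $\alpha$. In short, the Cauchy split with absorption of $\alpha$ is the missing idea; once you adopt it, the bookkeeping you outline for the remaining terms goes through as in the paper.
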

\begin{proof}
Denote
\begin{align*}
\alpha&=\intTU \KK  |\nabla u|^{2s+4} \zeta^2 dxdt,& \gamma&=\intTU |\nabla u|^{2s+2} \zeta|\zeta_t| dx dt,\\
\beta&=\intTU \KK  |D^2 u|^2   |\nabla u|^{2s}  \zeta^2  dx dt,& \beta_0&=\intTU \KK  |D^2 u|^2  \zeta^2  dx dt.
\end{align*}

For $s\ge 0$, by applying \eqref{ibt} to $w(x)=u(x,t)$, $Q(x)=\mathcal Z(x,t)$, $s:=s+1$, and then integrating in $t$ from $0$ to $T$, we have 
\begin{align*}
\alpha
&\le
 C M_*^2 \beta + CM_*^2(1+(M_*\muz+\MZ)^2M_*^2) \intTU \KK |\nabla u|^{2s+2}  (|\nabla \zeta|^2 + \zeta^2 ) dx dt \\
&\quad + C {\rm sgn}(s)M_*^2 \beta_0 + CT(M_*^2\MZ)^{2s+4}(1+M_*^2\MZ)^{2s+2}.
\end{align*} 
Using \eqref{RMm} for upper bounds of $\MZ$ and $\muz$, we have
\begin{align*}
\alpha
&\le C M_*^2 \beta+ C\chi_*^4M_*^2(M_*+1)^4 \intTU \KK |\nabla u|^{2s+2}  (|\nabla \zeta|^2 + \zeta^2 ) dx dt \\
&\quad + C {\rm sgn}(s)M_*^2 \beta_0 + CT\chi_*^{4(2s+3)}M_*^{4(s+2)}(M_*+1)^{4(s+1)}.
\end{align*} 

We estimate $\beta$ by using \eqref{iterate4} and the fact $\widetilde M\le M_*$ to have
\begin{align*}
CM_*^2 \beta
&\le  C \chi_*^2M_*^2\phi \mathcal D_s 
+C\chi_*^{2(4+a)}M_*^6\intTU \KK  |\nabla u|^{2s}    \zeta^2 dx dt \\
&\quad + C\chi_*^{2(4+a)}M_*^2(M_*+1)^2\intTU \KK   |\nabla u|^{2s+2} ( \zeta^2 + |\nabla \zeta|^2)   dx dt
+ C\chi_*^2M_*^2 \gamma.
\end{align*} 
Thus,
\beq \label{albe}
\begin{aligned}
2\alpha+M_*^2 \beta
&\le  C\chi_*^2M_*^2 \phi \mathcal D_s 
+C\chi_*^{2(4+a)}M_*^6\intTU \KK  |\nabla u|^{2s} \zeta^2 dx dt \\
&\quad + C \chi_*^{2(4+a)}M_*^2(M_*+1)^4\intTU \KK   |\nabla u|^{2s+2} (\zeta^2 + |\nabla \zeta|^2)   dx dt\\
&\quad  + C {\rm sgn}(s)M_*^2 \beta_0 + CT\chi_*^{4(2s+3)}M_*^{4(s+2)}(M_*+1)^{4(s+1)}+ C\chi_*^2M_*^2 \gamma.
\end{aligned}
\eeq

For the second term on the right-hand side of \eqref{albe}, the integral is bounded by
\beqs
\intTU \KK  |\nabla u|^{2s} \zeta^2 dx dt
\le \intTU \KK  (|\nabla u|^{2s+2}+1) \zeta^2 dx dt\le \intTU \KK  |\nabla u|^{2s+2}\zeta^2dx dt +CT.
\eeqs

Combining this with the third term on the right-hand side of \eqref{albe} gives
\begin{align*}
&2\alpha+M_*^2 \beta\le  C\chi_*^2M_*^2 \phi \mathcal D_s + C \chi_*^{2(4+a)}M_*^2(M_*+1)^4\intTU \KK   |\nabla u|^{2s+2} (\zeta^2 + |\nabla \zeta|^2)   dx dt\\
&\quad + C {\rm sgn}(s)M_*^2 \beta_0 + CT\chi_*^{4(2s+3)}M_*^{4(s+2)}(M_*+1)^{4(s+1)}+CT\chi_*^{2(4+a)}M_*^6 + C\chi_*^2 M_*^2 \gamma. 
\end{align*}

As far as the two $T$-terms in the last inequality are concerned, the first one has 
 $$M_*^{4(s+2)}(M_*+1)^{4(s+1)}\le M_*^6(M_*+1)^{8s+6},$$
 while the second one has
 $$\chi_*^{2(4+a)}\le \chi_*^{10}\le \chi_*^{4(2s+3)}.$$
 
Therefore,
\beq
2\alpha+M_*^2 \beta\le CI_* + C\chi_*^2 M_*^2 \gamma.\label{ab0}
\eeq

We estimate the last term by using Cauchy's inequality to obtain
\begin{align*}
 C\chi_*^2 M_*^2 \gamma
 &\le  \intTU \KK|\nabla u|^{2s+4} \zeta^2 dx dt
 + C\chi_*^4M_*^4\intTU \KK^{-1}|\nabla u|^{2s}|\zeta_t|^2 dxdt\\
 &=\alpha
 + C\chi_*^4M_*^4\intTU \KK |\nabla u|^{2s} \KK^{-2}|\zeta_t|^2 dxdt.
\end{align*}

In the last integral, we have
\begin{align*}
 |\nabla u|^{2s} \KK^{-2}&\le |\nabla u|^{2s} (1+|\nabla u|+ M_*^2\chi_*^2)^{2a}\\
& \le C(|\nabla u|^{2s} +|\nabla u|^{2s+2a}+ M_*^{4a}\chi_*^{4a}|\nabla u|^{2s})\\
 &\le C[1+|\nabla u|^{2s+2}+ M_*^{4a}\chi_*^{4a}(1+{\rm sgn}(s)|\nabla u|^{2s+2})],
\end{align*}
which can be conveniently rewritten as
\beqs
 |\nabla u|^{2s} \KK^{-2}\le C (M_*+1)^{4a}\chi_*^{4a}+C (M_*+1)^{4a\cdot{\rm sgn}(s)}\chi_*^{4a\cdot{\rm sgn}(s)}|\nabla u|^{2s+2}.
\eeqs

Thus,
\beq\label{ab}
\begin{aligned}
 C\chi_*^2M_*^2 \gamma
 &\le  \alpha
 + C\chi_*^{4(1+a\cdot{\rm sgn}(s))}M_*^4 (M_*+1)^{4a\cdot{\rm sgn}(s)}\intTU \KK |\nabla u|^{2s+2}|\zeta_t|^2 dxdt\\
&\quad + C\chi_*^{4(1+a)}M_*^4 (M_*+1)^{4a}\intTU |\zeta_t|^2 dxdt\le  \alpha + CJ_*.
 \end{aligned}
\eeq 

Combining \eqref{ab0} and \eqref{ab} yields
\beqs
2\alpha+M_*^2 \beta\le CI_*+\alpha+CJ_*,
\text{ which implies } \alpha+M_*^2 \beta\le C(I_*+J_*).
\eeqs
We have proved \eqref{Kug3}.
\end{proof}

As one can see from \eqref{Kug3} that the integral of higher power $2s+4$ of $|\nabla u|$, with the weight $\mathcal K(x,t)$, can be bounded by the corresponding integral of lower power $2s+2$. However, it still involves a second order term, which is the last integral of $I_*$. This term, as it turns out, can be estimated in \eqref{ab4} below.

\subsection{Estimates for the $L_{x,t}^{4-a}$-norm}\label{L4ma}

We start using inequality \eqref{Kug3} with the smallest possible value for $s$, i.e., $s=0$. 
It will result in the $\mathcal K$-weighted $L_{x,t}^2$-estimate, and, consequently, the $L_{x,t}^{4-a}$ estimate for $|\nabla u|$.

\begin{proposition}\label{prop63}
One has
\beq\label{ab4}
\begin{aligned}
&\intTU \KK  |\nabla u|^{4} \zeta^2 dxdt+M_*^2\intTU \KK  |D^2 u|^2 \zeta^2 dxdt\\
&\le C(1+\sup|\nabla \zeta|^2+\sup |\zeta_t|^2)\Big\{ \chi_*^{2(5+a)}M_*^2(M_*+1)^4 \phi( \|\bar u_0\|_{L^2}^2 +\mathcal D_0)\\
&\quad +  \chi_*^{4(4+a)}  T  M_*^4(M_*+1)^8  + \chi_*^{4(3+a)}M_*^2(M_*+1)^4 \Ecal_*\Big\}.
\end{aligned}
\eeq 
\end{proposition}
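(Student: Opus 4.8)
The plan is to apply Lemma~\ref{lem62}, inequality~\eqref{Kug3}, with the single choice $s=0$. What makes this case self-contained is that ${\rm sgn}(0)=0$: the term $C\,{\rm sgn}(s)M_*^2\intTU\KK|D^2u|^2\zeta^2\,dxdt$ inside $I_*$ then vanishes. Since this is exactly the second-order quantity appearing on the left-hand side of \eqref{ab4}, no absorption of a second-order term is needed; and at $s=0$ the left-hand side of \eqref{Kug3} is precisely $\intTU\KK|\nabla u|^4\zeta^2\,dxdt+M_*^2\intTU\KK|D^2u|^2\zeta^2\,dxdt$, matching \eqref{ab4} verbatim. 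For the same reason the exponents $4a\cdot{\rm sgn}(s)$ in $J_*$ collapse and $(M_*+1)^{4a\cdot{\rm sgn}(s)}=1$. (This is exactly the point of the discussion preceding the proposition that the second-order term ``can be estimated in \eqref{ab4} below'': for $s>0$ it is controlled by invoking \eqref{ab4}, while for $s=0$ it is simply absent.)

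\textbf{Main steps.} First I would write out \eqref{Kug3} at $s=0$: with $\mathcal D_0$ in place of $\mathcal D_s$ one gets $I_*=\chi_*^2M_*^2\phi\mathcal D_0+T\chi_*^{12}M_*^6(M_*+1)^6+\chi_*^{2(4+a)}M_*^2(M_*+1)^4\intTU\KK|\nabla u|^2(\zeta^2+|\nabla\zeta|^2)\,dxdt$ and $J_*=T\chi_*^{4(1+a)}M_*^4(M_*+1)^{4a}\sup|\zeta_t|^2+\chi_*^4M_*^4\intTU\KK|\nabla u|^2|\zeta_t|^2\,dxdt$. Next, since $0\le\zeta\le1$ I would bound $\zeta^2\le1$, $|\nabla\zeta|^2\le\sup|\nabla\zeta|^2$, $|\zeta_t|^2\le\sup|\zeta_t|^2$, so that both surviving space-time integrals are at most $(1+\sup|\nabla\zeta|^2+\sup|\zeta_t|^2)\intTU\KK|\nabla u|^2\,dxdt$, and the stray factor $\sup|\zeta_t|^2$ in $J_*$ is likewise dominated by $1+\sup|\nabla\zeta|^2+\sup|\zeta_t|^2$. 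At this point the only remaining space-time integral is $\intTU\KK|\nabla u|^2\,dxdt$, which I would bound by estimate \eqref{gradu2} of Theorem~\ref{L2apos}, i.e. by $C\{\chi_*^2\phi\|\bar u_0\|_{L^2}^2+\chi_*^{2(4+a)}TM_*^2(M_*+1)^2+\chi_*^{2(2+a)}\Ecal_*\}$.

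\textbf{Wrap-up and main obstacle.} It then remains to collect the three resulting groups of terms --- those carrying $\phi(\|\bar u_0\|_{L^2}^2+\mathcal D_0)$, those carrying $T$, and those carrying $\Ecal_*$ --- and to homogenize the powers using $\chi_*\ge1$ and $0\le M_*\le M_*+1$. For instance $\chi_*^2\le\chi_*^{2(5+a)}$, $\chi_*^{12}\le\chi_*^{4(4+a)}$, $\chi_*^4\cdot\chi_*^{2(2+a)}=\chi_*^{8+2a}\le\chi_*^{4(3+a)}$, $2(4+a)+2(2+a)=4(3+a)$, $M_*^6(M_*+1)^6\le M_*^4(M_*+1)^8$, and $M_*^4\le M_*^2(M_*+1)^4$; these crude monotonicity estimates (the paper deliberately prefers coherence over sharpness) turn the collected inequality into exactly \eqref{ab4}. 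I do not expect a genuine obstacle: the whole content is the choice $s=0$, which closes the inequality with no absorption, followed by one application of the already-proved $\KK$-weighted $L^2$-estimate; everything else is bookkeeping of the exponents of $\chi_*$ and $M_*$.
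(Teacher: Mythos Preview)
Your proposal is correct and follows essentially the same approach as the paper's own proof: apply \eqref{Kug3} at $s=0$ so that the ${\rm sgn}(s)$ term drops out, bound the cut-off factors by $1+\sup|\nabla\zeta|^2+\sup|\zeta_t|^2$, estimate the remaining $\intTU\KK|\nabla u|^2\,dxdt$ via \eqref{gradu2}, and then homogenize the exponents of $\chi_*$ and $M_*$. The bookkeeping you outline matches the paper's.
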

\begin{proof}
Denote by $I$ the sum on the left-hand side of \eqref{ab4}. It follows \eqref{Kug3} with $s=0$ that 
\begin{align*}
I&\le  C\chi_*^2M_*^2 \phi \mathcal D_0  + CT\cdot \big [\chi_*^{12}M_*^6 (M_*+1)^{6}+\chi_*^{4(1+a)}M_*^4 (M_*+1)^{4a}\sup|\zeta_t|^2\big ] \\
&\quad + C \chi_*^{2(4+a)}M_*^2(M_*+1)^4\intTU \KK   |\nabla u|^{2} (\zeta^2 + |\nabla \zeta|^2)   dx dt\\
&\quad + C \chi_*^{4}M_*^4\intTU \KK   |\nabla u|^{2} |\zeta_t|^2 dx dt.
\end{align*} 
The second term on the right-hand side is bounded by
$$C(1+\sup|\zeta_t|^2)\chi_*^{12} T M_*^4(M_*+1)^8 ,$$
and the sum of the last two terms on the right-hand side is bounded by 
\beqs C(1+\sup |\nabla\zeta|^2+\sup |\zeta_t|^2)  \chi_*^{2(4+a)} M_*^2(M_*+1)^4 
\intTU \KK|\nabla u|^2 dxdt.
\eeqs
Hence, 
\begin{align*}
I&\le   C(1+\sup |\nabla\zeta|^2+\sup |\zeta_t|^2)\Big\{ \chi_*^2M_*^2 \phi \mathcal D_0+\chi_*^{12}  T  M_*^4(M_*+1)^8 \\
&\quad + \chi_*^{2(4+a)}M_*^2(M_*+1)^4  \intTU \KK|\nabla u|^2 dxdt\Big\}.
\end{align*} 

Estimating the last integral by \eqref{gradu2} gives  
\begin{align*}
I&\le   C(1+\sup |\nabla\zeta|^2+\sup |\zeta_t|^2)\Big\{ \chi_*^2M_*^2 \phi \mathcal D_0+\chi_*^{12}  T  M_*^4(M_*+1)^8 \\
&\quad + \chi_*^{2(4+a)}M_*^2(M_*+1)^4  \Big[ \chi_*^2\phi \|\bar u_0\|_{L^2}^2 
 + \chi_*^{2(4+a)} T M_*^2(M_*+1)^2 + \chi_*^{2(2+a)} \Ecal_*\Big]\Big\}.
\end{align*} 

Grouping the like-terms on the right-hand side and using simple estimations yield inequality \eqref{ab4}.
\end{proof}

By selecting the cut-off function $\zeta$ in \eqref{ab4} appropriately, we derive the spatial, as well as the spatial-temporal, interior estimates for $|\nabla u|$. 

\begin{xnotation}
For simplicity, we will write $V\Subset U$ to indicate that $V$ is an open, relatively compact subset of $U$. 
\end{xnotation}

\begin{theorem}\label{thm64}
Let $U'\Subset U$. 

{\rm (i)} One has
\beq\label{ab1}
\begin{aligned}
\intTUp \KK  |\nabla u|^{4} dxdt
&\le C\Big\{ \chi_*^{2(5+a)}M_*^2(M_*+1)^4 \phi( \|\bar u_0\|_{L^2}^2 +\|\nabla u_0\|_{L^2}^2)\\
&\quad +  \chi_*^{4(4+a)}  T  M_*^4(M_*+1)^8  + \chi_*^{4(3+a)}M_*^2(M_*+1)^4 \Ecal_*\Big\}.
\end{aligned}
\eeq 

Consequently,
\beq\label{ab11}
\intTUp \KK  |\nabla u|^{4} dxdt
\le C\chi_*^{4(4+a)} M_*^2(M_*+1)^{10} \Ncal_2.
\eeq 

{\rm (ii)} If $T_0$ is any number in $(0,T)$, then
\beq\label{ab2}
\begin{aligned}
\int_{T_0}^{T}\int_{U'} \KK  |\nabla u|^{4} dxdt
&\le C(1+T_0^{-1})^2\Big\{ \chi_*^{2(5+a)}M_*^2(M_*+1)^4 \phi \|\bar u_0\|_{L^2}^2 \\
&\quad +  \chi_*^{4(4+a)}  T  M_*^4(M_*+1)^8  + \chi_*^{4(3+a)}M_*^2(M_*+1)^4 \Ecal_*\Big\}.
\end{aligned}
\eeq 

Consequently,
\beq\label{ab22}
\int_{T_0}^{T}\int_{U'} \KK  |\nabla u|^{4} dxdt
\le C(1+T_0^{-1})^2\chi_*^{4(4+a)} M_*^2(M_*+1)^{10}\Ncal_*.
\eeq 

{\rm (iii)} If  $s\in[2,4]$, then
\beq \label{ab23}
\intTUp \KK  |\nabla u|^s dxdt \le C\chi_*^{(4+a)s}M_*^{s-2}(M_*+1)^{3s-2}\Ncal_2,
\eeq 
and, for $T_0\in(0,T)$,
\beq \label{ab24}
\int_{T_0}^T\int_{U'} \KK  |\nabla u|^s dxdt \le C(1+T_0^{-1})^{s-2}\chi_*^{(4+a)s}M_*^{s-2}(M_*+1)^{3s-2}\Ncal_*.
\eeq 
\end{theorem}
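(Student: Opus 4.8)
The plan is to derive all four estimates \eqref{ab1}--\eqref{ab24} from the single interior inequality \eqref{ab4} of Proposition \ref{prop63} by choosing suitable cut-off functions $\zeta$, and then to invoke the Definition \ref{BEN} quantities together with the elementary relations \eqref{NNNs} to repackage the right-hand sides into the concise forms.

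First, for part (i), I would fix $U'\Subset U$ and pick $\zeta=\zeta(x)\in C_c^2(U)$, independent of $t$, with $0\le\zeta\le1$, $\zeta\equiv1$ on $U'$, and $\supp\zeta\Subset U$. Then $\zeta_t\equiv0$, so $\sup|\zeta_t|=0$, while $\sup|\nabla\zeta|$ is a constant absorbed into the generic $C$ (it depends only on $U'$ and $U$, which is allowed by the Notation block). With this choice, $\mathcal D_0=\int_U|\nabla u_0|^2\zeta^2(x,0)\,dx\le\|\nabla u_0\|_{L^2}^2$, and the left-hand side of \eqref{ab4} dominates $\int_0^T\!\!\int_{U'}\KK|\nabla u|^4\,dxdt$. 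This gives \eqref{ab1} immediately. For \eqref{ab11}, I would bound each of the three groups of terms on the right of \eqref{ab1}: use $\phi\|\bar u_0\|_{L^2}^2+\phi\|\nabla u_0\|_{L^2}^2\le\Ncal_2$, $T\le\Ncal_2$, and $\Ecal_*\le\Ncal_2$ (all from Definition \ref{BEN}), then collect powers of $\chi_*$ (the largest is $\chi_*^{4(4+a)}$ since $\chi_*\ge1$) and powers of $M_*$ and $(M_*+1)$ (noting $M_*^2(M_*+1)^4\le M_*^2(M_*+1)^{10}$, $M_*^4(M_*+1)^8\le M_*^2(M_*+1)^{10}$, etc.), arriving at $C\chi_*^{4(4+a)}M_*^2(M_*+1)^{10}\Ncal_2$.

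For part (ii), the only change is that we must also cut off near $t=0$: choose $\zeta(x,t)=\eta(x)\theta(t)$ where $\eta$ is as above (supported in a neighborhood of $U'$ inside $U$) and $\theta\in C^2([0,T])$ with $\theta\equiv0$ on $[0,T_0/2]$, $\theta\equiv1$ on $[T_0,T]$, $0\le\theta\le1$, and $|\theta'|\le C/T_0$. Then $\zeta(x,0)=0$ so the $\mathcal D_0$ term drops entirely, while $\sup|\zeta_t|^2\le C/T_0^2$ and $\sup|\nabla\zeta|^2\le C$, so the prefactor $1+\sup|\nabla\zeta|^2+\sup|\zeta_t|^2$ in \eqref{ab4} is bounded by $C(1+T_0^{-1})^2$. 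The left-hand side now controls $\int_{T_0}^T\!\!\int_{U'}\KK|\nabla u|^4\,dxdt$, yielding \eqref{ab2}; then \eqref{ab22} follows by the same repackaging as before but using $\Ncal_*$ in place of $\Ncal_2$ (legitimate since $\Ncal_*\le\Ncal_2$ and here there is no $\|\nabla u_0\|_{L^2}^2$ term to accommodate). Finally, for part (iii) with $s\in[2,4]$, I would interpolate: write $\KK|\nabla u|^s=(\KK|\nabla u|^2)^{(4-s)/2}(\KK|\nabla u|^4)^{(s-2)/2}$ — which is valid because the exponents $(4-s)/2$ and $(s-2)/2$ are nonnegative and sum to $1$, and the $\KK$ powers also sum correctly since $\tfrac{4-s}{2}+\tfrac{s-2}{2}\cdot 2 \ne 1$...

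Actually I should be careful here: the clean interpolation is on $\KK|\nabla u|^2$ and $\KK|\nabla u|^4$ via Hölder in $(x,t)$ with exponents $\tfrac{2}{4-s}$ and $\tfrac{2}{s-2}$, giving $\int\KK|\nabla u|^s\zeta^2\le\big(\int\KK|\nabla u|^2\zeta^2\big)^{(4-s)/2}\big(\int\KK|\nabla u|^4\zeta^2\big)^{(s-2)/2}$, where I have used $\KK\le1$ to write $\KK=\KK^{(4-s)/2}\KK^{(s-2)/2}\le\KK^{(4-s)/2}\KK^{(s-2)/2}$ and absorbed the extra factor. Then I would plug in \eqref{gradu6a} (or \eqref{gradu4}/\eqref{gradu2}) for the first factor and \eqref{ab11} (resp. \eqref{ab22}) for the second, multiply the bounds, and use that $\Ncal_*^{(4-s)/2}\Ncal_2^{(s-2)/2}\le\Ncal_2$ (since each of $\Ncal_*,\Ncal_2\ge 1$ — or rather, since both are dominated by $\Ncal_2$ and the exponents sum to $1$) and similarly for the $T_0$-dependent version with $\Ncal_*$; tracking the powers of $\chi_*$ gives $\chi_*^{2\cdot\frac{4-s}{2}(4+a)}\chi_*^{4\cdot\frac{s-2}{2}(4+a)}=\chi_*^{(4+a)(4-s+2s-4)}=\chi_*^{(4+a)s}$, and the powers of $M_*$ and $(M_*+1)$ collapse to $M_*^{s-2}(M_*+1)^{3s-2}$ after a short computation, giving \eqref{ab23} and \eqref{ab24}.

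The main obstacle I anticipate is purely bookkeeping rather than conceptual: keeping the exponents of $\chi_*$, $M_*$, and $(M_*+1)$ consistent through the interpolation in part (iii), and making sure the crude bounds $\Ncal_*\le\Ncal_2\le\Ncal_s$ together with $\chi_*\ge1$ are applied in the right direction so that the final exponents match the stated ones exactly. There is also a small subtlety in part (iii) about whether to interpolate against the weighted $L^2$ bound \eqref{gradu6a} (which carries $(M_*+1)^4$) or the sharper \eqref{gradu4} (which carries $\Ncal_0$ and an extra $M_*$-factor); since the target \eqref{ab23} is stated with $\Ncal_2$ and no $\Ncal_0$, I would use \eqref{gradu6a}, and likewise \eqref{gradu6a} again (not the $T_0$-localized version, which does not exist for the $L^2$-bound) for \eqref{ab24} — the $L^2$-norm $\intTU\KK|\nabla u|^2$ needs no interior-in-time cutoff, so only the $L^4$-factor contributes the $(1+T_0^{-1})$ weight, and it does so to the power $(s-2)/2$, i.e. $(1+T_0^{-1})^{s-2}$ after squaring, exactly as in the statement.
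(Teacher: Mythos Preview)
Your plan is correct and matches the paper's proof essentially line for line: the paper uses exactly the time-independent cut-off for (i), the cut-off vanishing on $[0,T_0/2]$ for (ii), and the interpolation inequality \eqref{Lpinter} with $p=2$, $q=4$ against \eqref{gradu6a} and \eqref{ab11}/\eqref{ab22} for (iii). One cosmetic point: your momentary worry about the $\KK$ powers in the interpolation is unnecessary, since applying \eqref{Lpinter} directly with the measure $d\mu=\KK\,dx\,dt$ gives the clean inequality without any appeal to $\KK\le1$ (the exponents $(4-s)/2$ and $(s-2)/2$ already sum to $1$).
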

\begin{proof}
(i) We fix a cut-off function $\zeta=\zeta(x)$ with $\zeta\equiv 1$ on $U'$. 
We have $|\nabla\zeta|\le C$ and $\zeta_t\equiv 0$. Then, by using inequality \eqref{ab4}, we obtain   
\begin{align*}
\intTUp \KK  |\nabla u|^{4}  dxdt
&\le \intTU \KK  |\nabla u|^{4}\zeta^2  dxdt\\
&\le C\Big\{ \chi_*^{2(5+a)}M_*^2(M_*+1)^4 \phi( \|\bar u_0\|_{L^2}^2 +\mathcal D_0)\\
&\quad +  \chi_*^{4(4+a)}  T  M_*^4(M_*+1)^8  + \chi_*^{4(3+a)}M_*^2(M_*+1)^4 \Ecal_*\Big\}.
\end{align*}
This proves \eqref{ab1}.
Now, note on the right-hand side of \eqref{ab1} that  
\beq \label{oo}
\chi_*^{2(5+a)}M_*^2(M_*+1)^4,   \chi_*^{4(4+a)}  M_*^4(M_*+1)^8,  \chi_*^{4(3+a)}M_*^2(M_*+1)^4 \le \chi_*^{4(4+a)}M_*^2(M_*+1)^{10}.
\eeq
Utilizing these estimates, we obtain \eqref{ab11} from \eqref{ab1}.

(ii) We select a different cut-off function $\zeta=\zeta(x,t)$ such that $\zeta=0$ for $0\le t\le T_0/2$, and $\zeta=1$ on $U'\times [T_0,T]$, and its derivatives satisfy  
\beq \label{zprop}
 |\nabla\zeta|\le C \text{ and }0\le \zeta_t \le C T_0^{-1},
\eeq 
where $C>0$ is independent of $T_0,T$.

With this function $\zeta$, it is obvious from \eqref{Ds} that $\mathcal D_0=0$.
Then, by \eqref{ab4}, we have 
\begin{align*}
\int_{T_0}^{T}\int_{U'} \KK  |\nabla u|^{4}  dxdt
&\le \intTU  \KK  |\nabla u|^{4}\zeta^2   dxdt\\
&\le C(1+T_0^{-2})\Big\{ C\chi_*^{2(5+a)}M_*^2(M_*+1)^4 \phi \|\bar u_0\|_{L^2}^2\\
&\quad +  \chi_*^{4(4+a)}  T  M_*^4(M_*+1)^8  + \chi_*^{4(3+a)}M_*^2(M_*+1)^4 \Ecal_*\Big\},
\end{align*}
which gives \eqref{ab2}.
Utilizing \eqref{oo} again for the right-hand side of \eqref{ab2}, we obtain \eqref{ab22}.

(iii) The inequalities \eqref{ab23} and \eqref{ab24} already hold for $s=2$ thanks to \eqref{gradu6a} and \eqref{NNNs}, and for $s=4$ thanks to \eqref{ab1} and \eqref{ab11}. 

Consider $2<s<4$ now. By interpolation inequality \eqref{Lpinter}, we have
 \begin{align*}
 \intTUp \KK  |\nabla u|^s dxdt
 \le \Big( \intTUp \KK  |\nabla u|^2 dxdt\Big)^\frac{4-s}2\Big( \intTUp \KK  |\nabla u|^4 dxdt\Big)^\frac{s-2}2.
 \end{align*}
 
Applying  inequality \eqref{gradu6a}, respectively \eqref{ab11}, to estimate the first, respectively second, integral on the right-hand side, we obtain
\begin{align*}
 \intTUp \KK  |\nabla u|^s dxdt
 &\le C\Big(\chi_*^{2(4+a)}(M_*+1)^4 \Ncal_*\Big)^\frac{4-s}2\Big( \chi_*^{4(4+a)} M_*^2(M_*+1)^{10} \Ncal_2\Big)^\frac{s-2}2,
 \end{align*}
which yields \eqref{ab23}.

Similarly, by \eqref{Lpinter}, \eqref{gradu6a} and \eqref{ab22}, we have
\begin{align*}
 \int_{T_0}^T\int_{U'} \KK  |\nabla u|^s dxdt
& \le \Big( \int_{T_0}^T\int_{U'} \KK  |\nabla u|^2 dxdt\Big)^\frac{4-s}2\Big( \int_{T_0}^T\int_{U'} \KK  |\nabla u|^4 dxdt\Big)^\frac{s-2}2  \\
&\le C \Big(\chi_*^{2(4+a)}(M_*+1)^4 \Ncal_*\Big)^\frac{4-s}2\Big( (1+T_0^{-1})^2\chi_*^{4(4+a)} M_*^2(M_*+1)^{10} \Ncal_*\Big)^\frac{s-2}2,
\end{align*}
which implies \eqref{ab24}.
\end{proof}

The estimates obtained in Theorem \ref{thm64} contain the weight $\KK(x,t)$.
Below, we derive the estimates for the standard Lebesgue $L^{4-a}_{x,t}$-norm (without that weight).

\begin{corollary}\label{cor65}
Let $U'\Subset U$ and $T_0\in(0,T)$.

{\rm (i)} One has
\beq\label{ab31}
\begin{aligned}
\intTUp |\nabla u|^{4-a} dxdt
&\le C\chi_*^{4(4+a)} (M_*+1)^{12} \Ncal_2,
\end{aligned}
\eeq 
and
\beq\label{ab32}
\begin{aligned}
\int_{T_0}^{T}\int_{U'} |\nabla u|^{4-a} dxdt
&\le C(1+T_0^{-1})^2\chi_*^{4(4+a)} (M_*+1)^{12}\Ncal_*.
\end{aligned}
\eeq 

{\rm (ii)} If $s$ is any number in $(2-a,4-a)$, then
\beq \label{ab33}
\intTUp  |\nabla u|^s dxdt
\le C\chi_*^{(4+a)(s+a)} (M_*+1)^{4(s+a-1)}\Ncal_2,
\eeq
and
\beq \label{ab34}
\int_{T_0}^T\int_{U'}  |\nabla u|^s dxdt
\le C(1+T_0^{-1})^{s+a-2}\chi_*^{(4+a)(s+a)} (M_*+1)^{4(s+a-1)}\Ncal_*.
\eeq
 \end{corollary}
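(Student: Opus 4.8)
The plan is to convert the $\KK$-weighted estimates from Theorem \ref{thm64} into ordinary Lebesgue estimates by absorbing the weight. Recall that $\KK(x,t)=(1+|\hh(x,t)|)^{-a}$ with $\hh=\nabla u+u^2\mathcal Z$, so $\KK^{-1}=(1+|\nabla u+u^2\mathcal Z|)^a\le C(1+|\nabla u|^a+|u^2\mathcal Z|^a)$ by \eqref{ee3}. Since $|u^2\mathcal Z|\le \MZ M_*^2\le \chi_*^2 M_*^2$ by \eqref{RMm}, we get a pointwise bound $\KK^{-1}\le C(1+|\nabla u|^a+\chi_*^{2a}M_*^{2a})\le C(M_*+1)^{2a}\chi_*^{2a}(1+|\nabla u|^a)$ using $\chi_*\ge 1$.

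First I would prove (i). Write $|\nabla u|^{4-a}=\KK|\nabla u|^{4-a}\cdot\KK^{-1}\le C(M_*+1)^{2a}\chi_*^{2a}(\KK|\nabla u|^{4-a}+\KK|\nabla u|^4)$. Then $\KK|\nabla u|^{4-a}\le \KK(1+|\nabla u|^4)\le \KK|\nabla u|^4+1$ by \eqref{ee5} (and $\KK\le1$), so after integrating over $U'\times(0,T)$ the first term is controlled by $\intTUp\KK|\nabla u|^4\,dxdt+C T|U'|\le \intTUp\KK|\nabla u|^4\,dxdt+CT$. Applying \eqref{ab11} to the $\KK|\nabla u|^4$ integral and absorbing the extra $CT$ into $\Ncal_2$ (note $T\le\Ncal_2$) yields \eqref{ab31} after simplifying the powers: $(M_*+1)^{2a}(M_*+1)^{10}\le(M_*+1)^{12}$ and $\chi_*^{2a}\chi_*^{4(4+a)}\le\chi_*^{4(4+a)}$ since these bounds are not meant to be sharp. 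The estimate \eqref{ab32} is obtained identically from \eqref{ab22}, carrying along the factor $(1+T_0^{-1})^2$.

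For (ii), I would interpolate between the $L^{2-a}$ bound \eqref{gradu6b} and the $L^{4-a}$ bound \eqref{ab31} using \eqref{Lpinter}: for $s\in(2-a,4-a)$,
\beqs
\intTUp |\nabla u|^s\,dxdt\le\Big(\intTUp|\nabla u|^{2-a}\,dxdt\Big)^{\frac{4-a-s}{2}}\Big(\intTUp|\nabla u|^{4-a}\,dxdt\Big)^{\frac{s-(2-a)}{2}}.
\eeqs
Substituting \eqref{gradu6b} (valid on $U'\subset U$) and \eqref{ab31}, then collecting exponents gives a bound of the form $C\chi_*^{2(4+a)\cdot\frac{4-a-s}{2}+4(4+a)\cdot\frac{s-2+a}{2}}(M_*+1)^{4\cdot\frac{4-a-s}{2}+12\cdot\frac{s-2+a}{2}}\Ncal_*^{\frac{4-a-s}{2}}\Ncal_2^{\frac{s-2+a}{2}}$; the $\chi_*$-exponent simplifies to $(4+a)(s+a)$ and the $(M_*+1)$-exponent to $4(s+a-1)$ (using $2+6=8$... actually $2(4-a-s)+6(s-2+a)=4(s+a-1)$), and $\Ncal_*^{\cdot}\Ncal_2^{\cdot}\le\Ncal_2$ since $\Ncal_*\le\Ncal_2$ and the exponents sum to $1$, yielding \eqref{ab33}. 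The estimate \eqref{ab34} follows the same way from \eqref{gradu6b} and \eqref{ab32}, where the temporal-cutoff factor becomes $(1+T_0^{-1})^{2\cdot\frac{s-2+a}{2}}=(1+T_0^{-1})^{s+a-2}$.

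There is no serious obstacle here; the only mildly delicate point is bookkeeping the exponents of $\chi_*$ and $(M_*+1)$ so that they coalesce into the clean forms stated, and making sure that the crude simplifications (replacing several distinct powers by a common larger power, and $\Ncal_*$ by $\Ncal_2$) are all in the correct direction. One must also confirm that \eqref{gradu6b} is applicable over the subdomain $U'\times(0,T)$ rather than only $U\times(0,T)$, which is immediate since the integrand is non-negative and $U'\subset U$.
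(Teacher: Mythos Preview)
Your overall strategy matches the paper's: part (i) removes the weight $\KK$ pointwise and appeals to \eqref{ab11}/\eqref{ab22}, and part (ii) interpolates via \eqref{Lpinter} between \eqref{gradu6b} and the $L^{4-a}$ bound. Part (ii) is correct and the exponent bookkeeping checks out exactly.

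There is, however, a genuine bookkeeping error in part (i). Your claimed simplification ``$\chi_*^{2a}\chi_*^{4(4+a)}\le\chi_*^{4(4+a)}$'' is backward: since $\chi_*\ge 1$, one has $\chi_*^{2a}\ge 1$, so multiplying by it \emph{increases} the bound. Your route therefore yields $\chi_*^{4(4+a)+2a}$, not $\chi_*^{4(4+a)}$, and does not prove \eqref{ab31} as stated. The source of the extra factor is the crude step $\KK^{-1}\le C\chi_*^{2a}(M_*+1)^{2a}(1+|\nabla u|^a)$, which forces $\chi_*^{2a}$ to sit \emph{multiplicatively} in front of the $\KK|\nabla u|^4$ term.

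The fix is exactly what the paper does (and what your own manipulation almost reaches): rather than factoring $\chi_*^{2a}$ out front, keep the three terms separate,
\[
|\nabla u|^{4-a}\le C\big(\KK|\nabla u|^{4-a}+\KK|\nabla u|^4+\chi_*^{2a}M_*^{2a}\KK|\nabla u|^{4-a}\big),
\]
and treat the last term with Young's inequality (exponents $4/(4-a)$ and $4/a$) so that $\chi_*^{2a}M_*^{2a}|\nabla u|^{4-a}\le |\nabla u|^4+\chi_*^8 M_*^8$. This is precisely the content of the paper's inequality \eqref{kugs} with $s=4-a$, giving $|\nabla u|^{4-a}\le C\KK|\nabla u|^4+C(1+M_*\chi_*)^8$. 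Now the $\chi_*$ dependence is \emph{additive}, and after integrating and using $T\le\Ncal_2$ the additive term contributes only $\chi_*^8(M_*+1)^8\Ncal_2\le \chi_*^{4(4+a)}(M_*+1)^{12}\Ncal_2$, matching the stated exponent. The analogous correction gives \eqref{ab32}. Your $(M_*+1)$ bookkeeping is fine since $10+2a<12$.
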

\begin{proof}
(i) Applying \eqref{kugs} to $s=4-a$ gives
 \begin{align*}
 |\nabla u|^{4-a}\le C \KK |\nabla u|^4 +C (1+M_*\chi_*)^8.
 \end{align*}
Combining this with  \eqref{ab11}, respectively \eqref{ab22}, we obtain \eqref{ab31}, respectively \eqref{ab32}.

 (ii) Consider $2-a<s<4-a$. By interpolation inequality \eqref{Lpinter} and then using \eqref{gradu6b}, \eqref{NNNs} and \eqref{ab31}, we have
  \begin{align*}
 &\intTUp  |\nabla u|^s dxdt
 \le \Big( \intTUp  |\nabla u|^{2-a} dxdt\Big)^\frac{4-a-s}2\Big( \intTUp |\nabla u|^{4-a} dxdt\Big)^\frac{s-(2-a)}2  \\
 &\le C\Big[\chi_*^{2(4+a)}(M_*+1)^4 \Ncal_*\Big]^\frac{4-a-s}2  \Big[\chi_*^{4(4+a)} (M_*+1)^{12}\Ncal_2\Big]^\frac{s-(2-a)}2 \\
 &\le C\chi_*^{(4+a)(s+a)} (M_*+1)^{4(s+a-1)}\Ncal_2.
 \end{align*}
Thus, we obtain \eqref{ab33}.

Similarly, by \eqref{Lpinter}, \eqref{gradu6b} and \eqref{ab32} we have
 \begin{align*}
 &\int_{T_0}^T\int_{U'}  |\nabla u|^s dxdt
 \le \Big( \int_{T_0}^T\int_{U'}  |\nabla u|^{2-a} dxdt\Big)^\frac{4-a-s}2\Big( \int_{T_0}^T\int_{U'} |\nabla u|^{4-a} dxdt\Big)^\frac{s-(2-a)}2  \\
 &\le C\Big[\chi_*^{2(4+a)}(M_*+1)^4 \Ncal_*\Big]^\frac{4-a-s}2  \Big[(1+T_0^{-1})^2\chi_*^{4(4+a)} (M_*+1)^{12}\Ncal_*\Big]^\frac{s-(2-a)}2 \\
 &\le C(1+T_0^{-1})^{s+a-2}\chi_*^{(4+a)(s+a)} (M_*+1)^{4(s+a-1)}\Ncal_*.
 \end{align*}
Thus, we obtain \eqref{ab34}.
 \end{proof}

 \begin{remark}
The estimate  \eqref{ab34} of the $L^s_{x,t}$-norm of  $\nabla u(x,t)$, for $t>0$, requires, as far as the initial data $u_0$ is concerned, at most the $L^\infty$-norm of $u_0$. Therefore, it shows the (formal) regularization effect of the PDE \eqref{ueq}.
This observation also applies to Corollary \ref{high1} and Theorem \ref{high5} below.
\end{remark}

\subsection{Estimates for higher $L_{x,t}^s$-norms}\label{Lhigher}

In this subsection, we have  estimates for the $L_{x,t}^s$-norms of $\nabla u$ with $s>4-a$.

\begin{lemma}\label{lem67}
Let $s>2$, and $V$ be an open subset of $U$.

{\rm (i)} If $\zeta=\zeta(x)$ with compact support in $V$, then 
\begin{multline}\label{ab66}
\intTU \KK  |\nabla u|^{s+2}\zeta^2 dxdt
\le  C (1+\sup|\nabla \zeta|^2)\\
\cdot \Big\{ \chi_*^{4(s+2+a)}M_*^2 (M_*+1)^{4s+2} \Ncal_s
+ \chi_*^{2(4+a)}M_*^2(M_*+1)^4 \intTV \KK |\nabla u|^s dx dt\Big\}.
\end{multline}

{\rm (ii)}  If $\zeta=\zeta(x,t)$ with $\zeta(x,0)\equiv 0$ and, for each $t\in[0,T]$, the mapping $\zeta(\cdot,t)$ has compact support in $V$,  then
\begin{multline}\label{ab77}
\intTU \KK  |\nabla u|^{s+2} \zeta^2 dxdt
\le  C(1+\sup|\nabla \zeta|^2+\sup|\zeta_t|^2)\\
\cdot \Big\{  \chi_*^{4(s+2+a)}M_*^2 (M_*+1)^{4s+2} \Ncal_*
 + \chi_*^{2(4+a)}M_*^2(M_*+1)^6\intTV \KK   |\nabla u|^s  dx dt\Big\}.
\end{multline}
\end{lemma}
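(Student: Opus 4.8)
The plan is to feed the bootstrapping inequality \eqref{Kug3} of Lemma \ref{lem62} into itself through a shift of its exponent parameter, and then dispose of the leftover Hessian term by means of Proposition \ref{prop63}. Since $s>2$, set $\sigma=(s-2)/2>0$, so that $2\sigma+4=s+2$, $2\sigma+2=s$, $2\sigma=s-2$, and $\mathcal D_\sigma=\int_U|\nabla u_0|^{s}\zeta^2(x,0)\,dx$. Applying \eqref{Kug3} with $s$ replaced by $\sigma$ and discarding the nonnegative term $M_*^2\intTU\KK|D^2u|^2|\nabla u|^{s-2}\zeta^2\,dxdt$ on its left-hand side yields $\intTU\KK|\nabla u|^{s+2}\zeta^2\,dxdt\le C(I_*+J_*)$, where $I_*$ and $J_*$ are the quantities of Lemma \ref{lem62} evaluated at $\sigma$. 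Writing them out (note ${\rm sgn}(\sigma)=1$), $I_*$ is made of $C\chi_*^2M_*^2\phi\mathcal D_\sigma$, the pure $T$-term $CT\chi_*^{4(s+1)}M_*^6(M_*+1)^{4s-2}$, the gradient term $C\chi_*^{2(4+a)}M_*^2(M_*+1)^4\intTU\KK|\nabla u|^{s}(\zeta^2+|\nabla\zeta|^2)\,dxdt$, and the residual term $CM_*^2\intTU\KK|D^2u|^2\zeta^2\,dxdt$; while $J_*$ is made of $CT\chi_*^{4(1+a)}M_*^4(M_*+1)^{4a}\sup|\zeta_t|^2$ and $C\chi_*^{4(1+a)}M_*^4(M_*+1)^{4a}\intTU\KK|\nabla u|^{s}|\zeta_t|^2\,dxdt$.

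The first step is to control the residual Hessian term directly by \eqref{ab4} of Proposition \ref{prop63}, applied with the very same $\zeta$: it is at most $C(1+\sup|\nabla\zeta|^2+\sup|\zeta_t|^2)$ times $\chi_*^{2(5+a)}M_*^2(M_*+1)^4\phi(\|\bar u_0\|_{L^2}^2+\mathcal D_0)+\chi_*^{4(4+a)}TM_*^4(M_*+1)^8+\chi_*^{4(3+a)}M_*^2(M_*+1)^4\Ecal_*$, where $\mathcal D_0=\int_U|\nabla u_0|^2\zeta^2(x,0)\,dx$. The second step is bookkeeping. Since for each $t$ the support of $\zeta(\cdot,t)$ (hence of $\nabla\zeta(\cdot,t)$ and $\zeta_t(\cdot,t)$) lies in $V$, each of the integrals $\intTU\KK|\nabla u|^{s}\zeta^2\,dxdt$, $\intTU\KK|\nabla u|^{s}|\nabla\zeta|^2\,dxdt$ and $\intTU\KK|\nabla u|^{s}|\zeta_t|^2\,dxdt$ is bounded by the corresponding supremum of $1$, $|\nabla\zeta|^2$, $|\zeta_t|^2$ multiplied by $\intTV\KK|\nabla u|^{s}\,dxdt$. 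Factoring out $1+\sup|\nabla\zeta|^2+\sup|\zeta_t|^2$, the coefficient in front of $\intTV\KK|\nabla u|^s\,dxdt$ is at most $\max\{\chi_*^{2(4+a)}M_*^2(M_*+1)^4,\,\chi_*^{4(1+a)}M_*^4(M_*+1)^{4a}\}$, which by $\chi_*\ge1$ and $a\in(0,1)$ is $\le\chi_*^{2(4+a)}M_*^2(M_*+1)^4$ when $\zeta_t\equiv0$ and $\le\chi_*^{2(4+a)}M_*^2(M_*+1)^6$ in general, since $\chi_*^{4+4a}\le\chi_*^{8+2a}$ and $M_*^4(M_*+1)^{4a}\le M_*^2(M_*+1)^{2+4a}\le M_*^2(M_*+1)^6$. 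Every remaining ($T$- and data-)term is then absorbed into $C\chi_*^{4(s+2+a)}M_*^2(M_*+1)^{4s+2}$ times $\Ncal_s$ (resp. $\Ncal_*$), using that each power of $\chi_*$ that occurs is $\le\max\{16+4a,4s+4\}\le4(s+2+a)$ for $s\ge2$, that $M_*^6\le M_*^2(M_*+1)^4$ and $(M_*+1)^{8},(M_*+1)^{4s-2}\le(M_*+1)^{4s+2}$ for $s\ge2$, and that $T\le\Ncal_*$, $\Ecal_*\le\Ncal_*$, $\phi\|\bar u_0\|_{L^2}^2\le\Ncal_*$.

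Finally, we split into the two cases. In (i) we have $\zeta=\zeta(x)$, so $\zeta_t\equiv0$ eliminates $J_*$ and the $\sup|\zeta_t|^2$ contributions, while $\mathcal D_\sigma\le\|\nabla u_0\|_{L^s}^s$ and $\mathcal D_0\le\|\nabla u_0\|_{L^2}^2$ are subsumed in the term $\phi(\|\bar u_0\|_{L^2}^2+\|\nabla u_0\|_{L^2}^2+\|\nabla u_0\|_{L^s}^s)$ of $\Ncal_s$; collecting terms produces \eqref{ab66}. In (ii) we have $\zeta(x,0)\equiv0$, which forces $\mathcal D_\sigma=\mathcal D_0=0$, so no derivative of the initial datum enters and every surviving data term is controlled by $\Ncal_*=\phi\|\bar u_0\|_{L^2}^2+T+\Ecal_*$; collecting terms produces \eqref{ab77}. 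There is no real analytic obstacle beyond invoking Lemma \ref{lem62} and Proposition \ref{prop63}; the one point demanding care is the chain of exponent comparisons above, all of which hold precisely because $s\ge2$ — indeed several of them degenerate to equalities at $s=2$ — together with $\chi_*\ge1$ and $a\in(0,1)$.
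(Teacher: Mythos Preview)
Your proof is correct and follows essentially the same route as the paper's: apply the bootstrapping inequality \eqref{Kug3} at the shifted exponent $\sigma=(s-2)/2$, estimate the leftover Hessian integral by \eqref{ab4}, and then do the exponent bookkeeping to package everything into $\Ncal_s$ (resp.\ $\Ncal_*$) with the stated powers of $\chi_*$ and $M_*+1$. The paper merely organizes the same computation differently, first deriving the inequality in terms of the auxiliary power $2s+2$ and then making the substitution $2s+2\mapsto s$, whereas you substitute at the outset; the content is identical.
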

\begin{proof}
Denote $$I=\intTU \KK  |\nabla u|^{2(s+2)}\zeta^2 dxdt \text{ and }J=\intTV \KK |\nabla u|^{2(s+1)}dx dt.$$

(i) Consider $s>0$. We estimate $I$ by \eqref{Kug3}, neglecting the second term on the left-hand side. Note in this case that $\zeta_t=0$ and hence $J_*=0$.  We then use \eqref{ab4} to estimate the last term of $I_*$. The result is 
\begin{align*}
I
&\le  C M_*^2\chi_*^2\phi \mathcal D_s + CT\chi_*^{4(2s+3)}M_*^6 (M_*+1)^{8s+6}  + C\chi_*^{2(4+a)}M_*^2(M_*+1)^4 (1+\sup|\nabla \zeta|^2)  J\\
&\quad +  C(1+\sup|\nabla \zeta|^2)\Big[  \chi_*^{2(5+a)}M_*^2(M_*+1)^4\phi (\|\bar u_0\|_{L^2}^2+\|\nabla u_0\|_{L^2}^2) 
+ T \chi_*^{4(4+a)}M_*^4(M_*+1)^8 \\
&\quad +\chi_*^{4(3+a)}M_*^2(M_*+1)^4 \Ecal_* \Big].
\end{align*} 
For the terms containing the initial data, we estimate
\begin{align*}
M_*^2\chi_*^2\le  \chi_*^{2(5+a)}M_*^2(M_*+1)^4, 
\end{align*}
and for the terms containing $T$, we use
\beq\label{Tcoef}
\chi_*^{4(2s+3)}M_*^6 (M_*+1)^{8s+6},\ \chi_*^{4(4+a)}M_*^4(M_*+1)^8\le \chi_*^{8(s+2)+4a}M_*^4 (M_*+1)^{8(s+1)}.
\eeq
Hence, we obtain
\begin{multline}\label{Kug6}
I\le  C (1+\sup|\nabla \zeta|^2) \Big\{ \chi_*^{2(5+a)}M_*^2(M_*+1)^4 \phi(\|\bar u_0\|_{L^2}^2+ \|\nabla u_0\|_{L^2}^2+\|\nabla u_0\|_{L^{2s+2}}^{2s+2})\\
+ T\chi_*^{8(s+2)+4a}M_*^4 (M_*+1)^{8(s+1)}
+\chi_*^{4(3+a)}M_*^2(M_*+1)^4 \Ecal_*
+ \chi_*^{2(4+a)}M_*^2(M_*+1)^4 J \Big\}.
\end{multline} 

Now, consider $s>2$. By replacing $2s+2$ in \eqref{Kug6} with $s$, noting that
$$I \text{ becomes }\intTU \KK  |\nabla u|^{s+2}\zeta^2 dxdt,\quad J
\text{ becomes }\intTV \KK |\nabla u|^s dx dt,
$$
the power $8(s+2)+4a$ becomes $4(s+2+a)$, and the power $8(s+1)$ becomes $4s$,
we obtain
\begin{multline}\label{ab6}
\intTU \KK  |\nabla u|^{s+2}\zeta^2 dxdt
\le  C (1+\sup|\nabla \zeta|^2)\\
\cdot \Big\{ \chi_*^{2(5+a)}M_*^2(M_*+1)^4 \phi(\|\bar u_0\|_{L^2}^2+ \|\nabla u_0\|_{L^2}^2+\|\nabla u_0\|_{L^s}^s)
+ T\chi_*^{4(s+2+a)}M_*^4 (M_*+1)^{4s}\\ 
+\chi_*^{4(3+a)}M_*^2(M_*+1)^4 \Ecal_*
+ \chi_*^{2(4+a)}M_*^2(M_*+1)^4 \intTV \KK |\nabla u|^s dx dt\Big\}.
\end{multline} 
On the right-hand side of \eqref{ab6}, in order to group the terms $\phi(\|\bar u_0\|_{L^2}^2+ \|\nabla u_0\|_{L^2}^2+\|\nabla u_0\|_{L^s}^s)$, $T$, $\Ecal_*$ together, we estimate their coefficients by 
$$\chi_*^{2(5+a)}M_*^2(M_*+1)^4,\chi_*^{4(s+2+a)}M_*^4 (M_*+1)^{4s},\chi_*^{4(3+a)}M_*^2(M_*+1)^4 
\le \chi_*^{4(s+2+a)}M_*^2 (M_*+1)^{4s+2}.$$
Then inequality \eqref{ab66} follows \eqref{ab6}.

(ii) Consider $s>0$. Note that $\mathcal D_0=\mathcal D_s=0$.  We have from \eqref{Kug3} that 
\begin{align*}
I&\le  C(1+\sup|\nabla \zeta|^2+\sup|\zeta_t|^2)
\Big\{  T\cdot\big[\chi_*^{4(2s+3)}M_*^6 (M_*+1)^{8s+6} + \chi_*^{4(1+a)}M_*^4 (M_*+1)^{4a}\big] \\
&\quad +\big[ \chi_*^{2(4+a)}M_*^2(M_*+1)^4 + \chi_*^{4(1+a)}M_*^4 (M_*+1)^{4a}\big]\cdot J\Big\}
  + M_*^2 \intTU \KK  |D^2 u|^2 \zeta^2 dxdt.
\end{align*} 

We use \eqref{ab4} to estimate the last term $\displaystyle M_*^2 \intTU \KK  |D^2 u|^2 \zeta^2 dxdt$.
For the $T$-term, we use \eqref{Tcoef} again. For the $J$-term we use
\beqs
 \chi_*^{2(4+a)}M_*^2(M_*+1)^4 , \chi_*^{4(1+a)}M_*^4 (M_*+1)^{4a}
 \le  \chi_*^{2(4+a)}M_*^2(M_*+1)^6.
\eeqs
Combining these estimates gives
\begin{align*}
I&\le  C(1+\sup|\nabla \zeta|^2+\sup|\zeta_t|^2)
\Big\{  T\chi_*^{8(s+2)+4a}M_*^4 (M_*+1)^{8(s+1)} 
+ \chi_*^{2(4+a)} M_*^2(M_*+1)^6 J\\
&\quad + \chi_*^{2(5+a)}M_*^2(M_*+1)^4 \phi \|\bar u_0\|_{L^2}^2+ T \chi_*^{4(4+a)}  M_*^4(M_*+1)^8  + \chi_*^{4(3+a)}M_*^2(M_*+1)^4 \Ecal_*\Big\}.
\end{align*} 
Simplifying the right-hand side once more, we obtain 
\beq\label{Kug7}
\begin{aligned}
I
&\le  C(1+\sup|\nabla \zeta|^2+\sup|\zeta_t|^2)
\Big\{  \chi_*^{2(5+a)}M_*^2(M_*+1)^4 \phi \|\bar u_0\|_{L^2}^2  \\
&\quad + T\chi_*^{8(s+2)+4a}M_*^4 (M_*+1)^{8(s+1)} 
 + \chi_*^{4(3+a)}M_*^2(M_*+1)^4 \Ecal_*\\
&\quad  + \chi_*^{2(4+a)}M_*^2(M_*+1)^6 J\Big\}.
\end{aligned}
\eeq 

Same as in the proof of part (i), when $s>2$, replacing $2s+2$ in \eqref{Kug7} with $s$ yields \eqref{ab77}.
\end{proof}

\begin{theorem}\label{high0}
If $U'\Subset U$ and  $s\ge 4$, then 
 \beq\label{ih0}
  \intTUp\KK |\nabla u|^{s}dxdt
  \le C \chi_*^{(4+a)(s+2)}M_*^2 (M_*+1)^{4s}\Ncal_{s-2}.
 \eeq
\end{theorem}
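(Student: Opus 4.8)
The plan is to derive \eqref{ih0} by iterating the bootstrapping inequality \eqref{ab66} of Lemma~\ref{lem67}(i), which raises the exponent of $|\nabla u|$ by $2$ at each step, with the base of the iteration supplied by the interior $L^{s_0}_{x,t}$-estimate \eqref{ab23} of Theorem~\ref{thm64}(iii). First I would dispose of the endpoint $s=4$, which is already \eqref{ab23} with $s=4$: indeed $\Ncal_2=\Ncal_{s-2}$, and since $\chi_*\ge 1$, $M_*+1\ge 1$ we have $\chi_*^{4(4+a)}\le\chi_*^{(4+a)(s+2)}$ and $M_*^2(M_*+1)^{10}\le M_*^2(M_*+1)^{4s}$. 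So assume $s>4$, put $m=\lceil(s-4)/2\rceil\ge 1$ and $s_0=s-2m$, so that $s_0\in(2,4]$ and $s_0\le s-2$. Fix a finite chain of open sets $U'=V_m\Subset V_{m-1}\Subset\cdots\Subset V_1\Subset V_0\Subset U$ and, for $j=1,\dots,m$, cut-off functions $\zeta_j=\zeta_j(x)\in C^2_c(V_{j-1})$ with $\zeta_j\equiv 1$ on $V_j$; the numbers $\sup|\nabla\zeta_j|$ are fixed and absorbed into $C$ (which is permitted to depend on $s$ and on the auxiliary sets $V_0,\dots,V_m$). Set $A_j=\inttx{0}{T}{V_j}\KK|\nabla u|^{s_0+2j}\,dx\,dt$ for $0\le j\le m$, so that $A_m$ is exactly the left-hand side of \eqref{ih0}.

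Next I would run the iteration. The base estimate \eqref{ab23}, applied on $V_0$ with exponent $s_0\in(2,4]$, gives
\beqs
A_0\le C\chi_*^{(4+a)s_0}M_*^{s_0-2}(M_*+1)^{3s_0-2}\Ncal_2\le C\chi_*^{(4+a)s_0}(M_*+1)^{4s_0}\Ncal_{s-2},
\eeqs
using $\Ncal_2\le\Ncal_{s-2}$ from \eqref{NNNs} (valid since $s-2>2$) and $M_*^{s_0-2}(M_*+1)^{3s_0-2}\le(M_*+1)^{4s_0}$. For each $j=1,\dots,m$, applying Lemma~\ref{lem67}(i) with $V=V_{j-1}$, cut-off $\zeta_j$, and the lemma's exponent parameter equal to $s_0+2(j-1)$ (which exceeds $2$), and using $\zeta_j\equiv1$ on $V_j$, $0\le\zeta_j\le1$, we obtain
\beqs
A_j\le C\Big\{\chi_*^{4(s_0+2j+a)}M_*^2(M_*+1)^{4(s_0+2j-2)+2}\Ncal_{s_0+2(j-1)}+\chi_*^{2(4+a)}M_*^2(M_*+1)^4\,A_{j-1}\Big\}.
\eeqs
Every index $s_0+2(j-1)$ appearing here is $\le s-2$, so all the $\Ncal$'s are $\le\Ncal_{s-2}$ by \eqref{NNNs}.

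Finally I would resolve this linear recursion. Unrolling it $m$ times writes $A_m$ as a sum of $m+1$ terms, each a product of a constant, a power of $\chi_*$, a power of $M_*$ times a power of $(M_*+1)$, and $\Ncal_{s-2}$; since $m\le s/2$, the accumulated constant $C^{m}$ still depends only on $s$ and the fixed data and is therefore admissible. The only remaining task is the exponent bookkeeping, checked using $\chi_*\ge 1$ and $M_*+1\ge 1$: the largest $\chi_*$-exponent is $4(s+a)$ (from the non-recursive term at step $j=m$), and the one accumulated by propagating the step-$1$ data term through all subsequent recursive factors is $2(4+a)(m-1)+4(s_0+2+a)$; using $2m=s-s_0$ one verifies both are $\le(4+a)(s+2)$. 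For the density powers, one bounds $M_*^{2k}\le(M_*+1)^{2k}$ while keeping one factor $M_*^2$ — available because $m\ge1$ forces at least one recursive multiplication by $M_*^2(M_*+1)^4$ on every term, including $A_0$ — and then finds the total $(M_*+1)$-exponent is $\le 4s$. Collecting the $m+1$ terms yields $A_m\le C\chi_*^{(4+a)(s+2)}M_*^2(M_*+1)^{4s}\Ncal_{s-2}$, which is \eqref{ih0}. I expect this exponent bookkeeping — tracking the geometric accumulation of the factor $\chi_*^{2(4+a)}M_*^2(M_*+1)^4$ over $m$ steps and showing it never overshoots — to be the only real obstacle; all the analytic content is already packaged in \eqref{ab23} and Lemma~\ref{lem67}.
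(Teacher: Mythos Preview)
Your proposal is correct and follows essentially the same route as the paper: both argue by nesting domains, iterating the bootstrapping inequality \eqref{ab66} of Lemma~\ref{lem67}(i) $m$ times, and closing with the $L^{s_0}$-base estimate from Theorem~\ref{thm64}, the only difference being that the paper packages the iteration into the intermediate claim \eqref{kug0} before invoking \eqref{ab11}. One small imprecision: \eqref{NNNs} as stated only gives $\Ncal_2\le\Ncal_s$, not full monotonicity $\Ncal_{s_1}\le\Ncal_{s_2}$ for $2<s_1<s_2$; the paper handles this (and you should too) via the elementary bound $\|\nabla u_0\|_{L^p}^p\le \|\nabla u_0\|_{L^2}^2+\|\nabla u_0\|_{L^q}^q$ for $2\le p\le q$, which yields $\Ncal_{s_0+2(j-1)}\le 2\Ncal_{s-2}$ with the harmless factor absorbed into $C$.
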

\begin{proof}
(a) When $s=4$ the inequality \eqref{ih0} holds true thanks to the estimate \eqref{ab11}.
Hence we only focus on the case $s>4$.

(b) Consider the case $s=s_*+2m$ with $s_*>2$ and $m\in \N$. 
Let $V$ be an open subset of $U$ such that $U'\Subset V\Subset U$.
We claim that
\beq\label{kug0}
\begin{aligned}
  \intTUp\KK |\nabla u|^{s}dxdt
& \le C\chi_*^{2(4+a)m}M_*^{2m}(M_*+1)^{4m}  \intTV \KK |\nabla u|^{s_*}dxdt\\
&\quad  +C \chi_*^{(4+a)s}M_*^2 (M_*+1)^{4s-6}\Ncal_{s-2}.
\end{aligned}
\eeq

\textit{Proof of \eqref{kug0}.}
Let $\{U_k\}_{k=0}^m$ be a family of smooth, open subsets of $U$ such that  
 \beq\label{Usets}
 U'\subset U_m\Subset U_{m-1}\Subset U_{m-2}\Subset \ldots \Subset U_{1} \Subset U_0 \subset V\Subset U.
 \eeq 

 Denote
$\displaystyle y_k=\intTUk \KK |\nabla u|^{s_*+2k}dx dt$ for $0\le k\le m$.

Let $k\in\{0,1,2,\ldots, m-1\}$. Choose $\zeta=\zeta_k(x)$, a $C^2$ cut-off function 
which is equal to $1$ on $U_{k+1}$ and has compact support in $U_k$. 
Applying \eqref{ab66} to $s:=s_*+2k$, we have  
\beq\label{yk2} y_{k+1}\le A y_k+B, 
\eeq
where $A=C_k\chi_*^{2(4+a)} M_*^2(M_*+1)^4$
and $\displaystyle B= C_k\chi_*^{4(s_*+2k+2+a)}M_*^2(M_*+1)^{4(s_*+2k)+2}\widehat B$,
with
\beqs
\widehat B
=\phi(\|\bar u_0\|_{L^2}^2+ \|\nabla u_0\|_{L^2}^2+\|\nabla u_0\|_{L_{s_*+2k}}^{s_*+2k})+T+\Ecal_*
\eeqs
for some $C_k>0$ independent of $T$.
Note that
\begin{align*}
\|\nabla u_0\|_{L_{s_*+2k}}^{s_*+2k}
=\int_U |\nabla u_0|^{s_*+2k}dx\le \int_U \big[|\nabla u_0|^2+|\nabla u_0|^{s_*+2(m-1)}\big]dx.   
\end{align*}
Hence,
\beqs
\widehat B
\le \phi(\|\bar u_0\|_{L^2}^2+ 2\|\nabla u_0\|_{L^2}^2+\|\nabla u_0\|_{L^{s_*+2(m-1)}}^{s_*+2(m-1)})+T+\Ecal_*
\le 2\Ncal_{s-2}.
\eeqs

Let $C_*=2\max\{C_k:k=1,2,\ldots,m-1\}$.
Hence,
\beq\label{yk1} y_{k+1}\le A_* y_k+B_k, 
\eeq
where 
\begin{align*}
A_*=C_*\chi_*^{2(4+a)} M_*^2(M_*+1)^4,\quad 
B_k= C_*\chi_*^{4(s_*+2k+2+a)}M_*^2(M_*+1)^{4(s_*+2k)+2}\Ncal_{s-2}
= B_* S^k,
\end{align*}
with 
$ S=\chi_*^8(M_*+1)^8$ and $B_*=A_*\chi_*^{4s_*+2a}(M_*+1)^{4s_*-2}\Ncal_{s-2}$.

Iterating \eqref{yk1}, we obtain
\begin{align*}
 y_{k+1}
 &\le A_*( A_*y_{k-1}+B_{k-1})+B_{k}
 = A_*^2y_{k-1}+A_*B_{k-1}+B_{k}\\
 &\le A_*^3y_{k-2}+A_*^2B_{k-2}+A_*B_{k-1}+B_{k}\\
 &\le \cdots\le A_*^{k+1}y_{0}+\sum_{j=0}^{k-1} A_*^{k-j} B_j + B_{k}. 
\end{align*}

Letting $k=m-1$, we then have
\beq\label{yk0}
y_m \le A_*^m y_0 + \sum_{j=0}^{m-2} A_*^{m-1-j}B_j+B_{m-1} . 
\eeq

Dealing with the middle sum on the right-hand side of \eqref{yk0}, elementary calculations show, for $0\le j\le m-2$, that
\begin{align*}
A_*^{m-j-1}B_j
&=A_*^{m-j} \chi_*^{4s_*+2a+8j}(M_*+1)^{4s_*-2+8j}\Ncal_{s-2}\\
&= C \chi_*^{4(s_*+2m)+2a(m-j+1)} M_*^{2(m-j)} (M_*+1)^{4(s_*+2m)-4(m-j)-2}\Ncal_{s-2}\\
&\le C \chi_*^{4s+2a(m+1)}\cdot [ M_*^2 (M_*+1)^{2(m-j-1)} ]\cdot (M_*+1)^{4s-4(m-j)-2}\Ncal_{s-2}\\
&= C \chi_*^{4s+2a(m+1)} M_*^2  (M_*+1)^{4s-2(m-j)-4}\Ncal_{s-2}.
\end{align*}
Note that $m-j\ge 2$ and 
\beq\label{sm}
s>2+2m.
\eeq
Then we have
\beq\label{abm}
A_*^{m-1-j}B_j
\le C \chi_*^{(4+a)s} M_*^2(M_*+1)^{4s-8}\Ncal_{s-2}.
\eeq

For the last term in \eqref{yk0}, one has
\begin{align}\label{bm}
 B_{m-1}
 &=C\chi_*^{4(s_*+2m+a)}M_*^2(M_*+1)^{4(s_*+2m)-6}\Ncal_{s-2}\notag\\
 &=C\chi_*^{4(s+a)}M_*^2(M_*+1)^{4s-6}\Ncal_{s-2}.
\end{align}

Then combining \eqref{yk0} with \eqref{abm} and \eqref{bm} gives
\begin{align*}
y_m&\le C\chi_*^{2(4+a)m}M_*^{2m}(M_*+1)^{4m}y_0+C (m-1)\chi_*^{(4+a)s} M_*^2(M_*+1)^{4s-8}\Ncal_{s-2}\\
&\quad +C\chi_*^{4(s+a)}M_*^2(M_*+1)^{4s-6}\Ncal_{s-2}\\
&\le C\chi_*^{2(4+a)m}M_*^{2m}(M_*+1)^{4m}y_0+C \chi_*^{(4+a)s}M_*^2 (M_*+1)^{4s-6}\Ncal_{s-2}.
\end{align*}
Therefore, estimate \eqref{kug0} follows.

(c) Consider the general case $s>4$ now. Then there exist  $s_*\in(2,4]$ and integer $m\ge 1$ such that $s=s_*+2m$.
We apply estimate \eqref{kug0} using the relation \eqref{sm}, and  have
\begin{align*}
  \intTUp\KK |\nabla u|^{s}dxdt
& \le C\chi_*^{(4+a)(s-2)}M_*^{2m}(M_*+1)^{2(s-2)}  \intTV \KK |\nabla u|^{s_*}dxdt\\
&\quad  +C \chi_*^{(4+a)s}M_*^2 (M_*+1)^{4s-6}\Ncal_{s-2}.
\end{align*}
Using \eqref{sm} again,
$$M_*^{2m}\le M_*^2(M_*+1)^{2m-2}\le M_*^2(M_*+1)^{s-4}.$$
Then
\begin{align*}
  \intTUp\KK |\nabla u|^{s}dxdt
& \le C\chi_*^{(4+a)(s-2)}M_*^2(M_*+1)^{3s-8}  \intTV \KK |\nabla u|^{s_*}dxdt\\
&\quad  +C \chi_*^{(4+a)s}M_*^2 (M_*+1)^{4s-6}\Ncal_{s-2}.
\end{align*}

Note, by Young's inequality and applying \eqref{ab11} to $U':=V$, that
\begin{align*}
&\intTV \KK |\nabla u|^{s_*}dxdt
\le C(T+\intTV \KK |\nabla u|^{4}dxdt) \\
&\le  C \chi_*^{4(4+a)}(M_*+1)^{12}\big[ \phi (\|\bar u_0\|_{L^2}^2+\|\nabla u_0\|_{L^2}^2)  +T+\Ecal_*\big].
\end{align*}

Due to \eqref{NNNs} we can conclude that
\beqs
  \intTUp\KK |\nabla u|^{s}dxdt
\le C\chi_*^{(4+a)(s+2)}M_*^2(M_*+1)^{3s+4} \Ncal_2  +C \chi_*^{(4+a)s}M_*^2 (M_*+1)^{4s-6}\Ncal_{s-2},
\eeqs
and we obtain \eqref{ih0}.
\end{proof}

\begin{proposition}\label{high2}
If $U'\Subset V\Subset U$, and $s=s_*+2m$ with $s_*>2$ and  $m\in \N$, then
\beq\label{kug3}
\begin{aligned}
  \int_{T_0}^{T}\int_{U'}\KK |\nabla u|^{s}dxdt
&  \le C(1+t_0^{-1})^{2m}\chi_*^{2(4+a)m}M_*^{2m}(M_*+1)^{6m}  \int_{T_0-t_0}^{T}\int_{V}\KK |\nabla u|^{s_*}dxdt\\
&\quad   +C(1+t_0^{-1})^{2m}\chi_*^{(4+a)s} M_*^2(M_*+1)^{4s-6}\Ncal_*.
\end{aligned}
   \eeq
for any numbers $T_0$ and $t_0$ such that $0<t_0<T_0<T$. 
\end{proposition}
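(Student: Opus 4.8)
The plan is to run a nested iteration in both space and time, parallel to the proof of \eqref{kug0} inside Theorem \ref{high0}, the only genuinely new ingredient being cut-off functions that localize in $t$ as well. Write $s=s_*+2m$ with $s_*>2$ and $m\ge 1$. First I would fix smooth open sets
$$U'\subset U_m\Subset U_{m-1}\Subset\cdots\Subset U_1\Subset U_0\subset V,$$
which exist because $U'\Subset V$, together with the equally spaced time levels $\tau_k=T_0-t_0+k\,t_0/m$ for $k=0,1,\dots,m$, so that $\tau_0=T_0-t_0$, $\tau_m=T_0$ and $\tau_{k+1}-\tau_k=t_0/m$. For each $k\in\{0,\dots,m-1\}$ I choose a product cut-off $\zeta_k(x,t)=\eta_k(x)\theta_k(t)\in C^2_c(U\times[0,T])$ with $0\le\zeta_k\le1$, where $\eta_k\equiv1$ on $U_{k+1}$ and $\supp\eta_k\Subset U_k$, while $\theta_k\equiv1$ on $[\tau_{k+1},T]$ and $\theta_k\equiv0$ on $[0,\tau_k]$. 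Such $\zeta_k$ can be taken with $\sup|\nabla\zeta_k|\le C_k$ and $\sup|\partial_t\zeta_k|\le C_k m/t_0$, so that $1+\sup|\nabla\zeta_k|^2+\sup|\partial_t\zeta_k|^2\le C_k\,m^2(1+t_0^{-1})^2$; since $m\le s$, the factor $m^2$ is harmless and is absorbed into the generic ($s$-dependent) constant.

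The engine of the iteration is a time-localized version of Lemma \ref{lem67}(ii). Its proof shows that, apart from the pure-data $\Ncal_*$-term, every $|\nabla u|$-integral on the right-hand side of \eqref{ab77} is in fact supported in $\supp\zeta\cup\supp\nabla\zeta\cup\supp\zeta_t$ (it descends from \eqref{Kug3}), and that the lone second-order term is removed via \eqref{ab4}, whose right-hand side is pure data once $\zeta(\cdot,0)\equiv0$. Hence, when $\zeta$ in addition vanishes for $t\le\tau$ and $\zeta(\cdot,t)$ is supported in an open set $W$, inequality \eqref{ab77} holds with $\intTV$ replaced by $\int_\tau^T\!\!\int_W\KK|\nabla u|^s\,dx\,dt$. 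Applying this with $\zeta:=\zeta_k$, $s:=s_*+2k>2$, $\tau:=\tau_k$ and $W:=U_k$, then using $\zeta_k\equiv1$ on $U_{k+1}\times[\tau_{k+1},T]$ on the left-hand side, and writing $y_k=\int_{\tau_k}^T\!\!\int_{U_k}\KK|\nabla u|^{s_*+2k}\,dx\,dt$, I obtain the recursion
$$y_{k+1}\le A_*\,y_k+B_k,\qquad k=0,1,\dots,m-1,$$
with $A_*=C(1+t_0^{-1})^2\chi_*^{2(4+a)}M_*^2(M_*+1)^6$ and $B_k=C(1+t_0^{-1})^2\chi_*^{4(s_*+2k+2+a)}M_*^2(M_*+1)^{4(s_*+2k)+2}\Ncal_*$, the constants depending only on $s$ and the finitely many sets $U_0,\dots,U_m,V$.

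Iterating yields $y_m\le A_*^m y_0+\sum_{j=0}^{m-1}A_*^{m-1-j}B_j$. On the left, $U'\subset U_m$ and $\tau_m=T_0$ give $\int_{T_0}^T\!\!\int_{U'}\KK|\nabla u|^s\,dx\,dt\le y_m$; on the right, $U_0\subset V$ and $\tau_0=T_0-t_0$ give $y_0\le\int_{T_0-t_0}^T\!\!\int_V\KK|\nabla u|^{s_*}\,dx\,dt$. The coefficient $A_*^m=C(1+t_0^{-1})^{2m}\chi_*^{2(4+a)m}M_*^{2m}(M_*+1)^{6m}$ is exactly the first coefficient in \eqref{kug3}. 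For the finite sum $\sum_j A_*^{m-1-j}B_j$ I would carry out the same elementary bookkeeping of the $\chi_*$-, $M_*$- and $(1+t_0^{-1})$-exponents as in the proof of Theorem \ref{high0} (cf.\ \eqref{abm}--\eqref{bm}), using $s=s_*+2m>2+2m$ and $s>4$; this bounds each summand, hence the whole sum after absorbing the number $m$ of terms, by $C(1+t_0^{-1})^{2m}\chi_*^{(4+a)s}M_*^2(M_*+1)^{4s-6}\Ncal_*$. Combining the two bounds gives \eqref{kug3}.

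The step I expect to be the main obstacle is establishing the time-localized form of Lemma \ref{lem67}(ii): as stated, that lemma carries the full integral $\intTV$ over $[0,T]\times V$, so one must reopen its proof (through \eqref{Kug3} and \eqref{ab4}) and verify that, aside from the pure-data $\Ncal_*$-contributions, every remaining $|\nabla u|$-integral lives on $\supp\zeta\cup\supp\nabla\zeta\cup\supp\zeta_t$, hence on $\{t\ge\tau\}\times V$ as soon as $\zeta$ vanishes for $t\le\tau$. Everything else — the nested families, the product cut-offs, the recursion, and the exponent arithmetic — is routine and mirrors the proof of Theorem \ref{high0}.
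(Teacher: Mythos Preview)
Your proposal is correct and follows essentially the same approach as the paper: the same nested spatial sets $U_m\Subset\cdots\Subset U_0$, the same evenly spaced time levels $\tau_k$, the same cut-offs supported in $U_k\times[\tau_k,T]$, the same application of \eqref{ab77} to produce the recursion $y_{k+1}\le A_* y_k+B_k$, and the same exponent bookkeeping borrowed from the proof of Theorem \ref{high0}. The point you flag as the main obstacle---that the right-hand side of \eqref{ab77} must be restricted to $\{t\ge\tau_k\}\times U_k$ rather than $[0,T]\times V$---is indeed needed, but it is immediate: every $|\nabla u|$-integral on the right of \eqref{Kug3} (hence of \eqref{Kug7}) carries a factor of $\zeta^2$, $|\nabla\zeta|^2$, or $|\zeta_t|^2$, and the remaining second-order term is removed by \eqref{ab4} into pure data since $\zeta(\cdot,0)\equiv0$; the paper uses this without comment.
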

\begin{proof}
Let $\{U_k\}_{k=0}^m$ be as in \eqref{Usets}.
Let $\tau_0=T_0-t_0<\tau_1<\tau_2<\ldots<\tau_m=T_0$ be evenly paced. 
Define 
$\displaystyle y_k=\int_{\tau_k}^{T}\int_{U_k} \KK |\nabla u|^{s_*+2k} dxdt$ for $0\le k\le m$.

Given $k\in\{0,1,2,\ldots, m-1$\}. Let $\zeta_k(x,t)$ be a smooth cut-off function 
 which is equal to one on $U_{k+1}\times [\tau_{k+1},T]$,  has compact support in $U_k\times[\tau_k,T]$, and satisfies 
$$|\nabla \zeta_k|\le C'_k,\quad 0\le \zeta_{k,t}\le \frac2{\tau_{k+1}-\tau_k}=\frac2{mt_0},$$
where $C'_k>0$ is independent of $T,T_0,t_0$.
Then using $s:=s_*+2k$ and $\zeta=\zeta_k$ in \eqref{ab77}, we have  the same relation \eqref{yk2}, with the constants defined by 
 \begin{align*} 
 A&=C_k(1+t_0^{-2})\chi_*^{2(4+a)}M_*^2(M_*+1)^6,\\
 B&=C_k(1+t_0^{-2})\chi_*^{4(s_*+2k+2+a)}M_*^2(M_*+1)^{4(s_*+2k)+2}\Ncal_*,
\end{align*}
for some positive constant $C_k$ independent of $T,T_0,t_0$.

Set $C_*=\max\{C_k:k=0,1,\ldots,m-1\}$, we obtain \eqref{yk1} where 
\begin{align*}
A_*&=C_*(1+t_0^{-1})^2\chi_*^{2(4+a)}M_*^2(M_*+1)^6,\\
B_k&=C_*(1+t_0^{-1})^2\chi_*^{4(s_*+2k+2+a)}M_*^2(M_*+1)^{4(s_*+2k)+2}\Ncal_*
= B_* S^k
\end{align*}
with the same $S=\chi_*^8(M_*+1)^8$, but 
\begin{align*}
 B_*&=C_*(1+t_0^{-1})^2\chi_*^{4(s_*+2+a)}M_*^2(M_*+1)^{4s_*+2}\Ncal_*=A_* \chi_*^{4s_*+2a}(M_*+1)^{4s_*-4}\Ncal_*.
\end{align*}
Then we obtain \eqref{yk0} by iteration again.
For $0\le j\le m-2$,
\begin{align*}
A_*^{m-j-1}B_j
&=A_*^{m-j} \chi_*^{4s_*+2a+8j}(M_*+1)^{4s_*+8j-4}\Ncal_*\\
&\le C(1+t_0^{-1})^{2(m-j)} \chi_*^{4(s_*+2m)+2a(m-j+1)} M_*^{2(m-j)} (M_*+1)^{4s_*+6m+2j-4}\Ncal_*.
\end{align*}
Simply estimating $M_*^{2(m-j)}\le  M_*^2 (M_*+1)^{2(m-j-1)}$, we then have
\begin{align*}
A_*^{m-j-1}B_j
&\le  C(1+t_0^{-1})^{2(m-j)} \chi_*^{4(s_*+2m)+2a(m+1)} M_*^2(M_*+1)^{4s_*+8m-6}\Ncal_*\\
&= C(1+t_0^{-1})^{2m} \chi_*^{(4+a)s} M_*^2(M_*+1)^{4s -6}\Ncal_*.
\end{align*}
Also,
\beqs
B_{m-1}=C(1+t_0^{-1})^2\chi_*^{4(s+a)}M_*^2(M_*+1)^{4s-6}\Ncal_*.
\eeqs
Thus, we have from \eqref{yk0} that
\begin{align*}
y_m&\le C(1+t_0^{-1})^{2m}\chi_*^{2(4+a)m}M_*^{2m}(M_*+1)^{6m}y_0\\
&\quad +C (m-1)(1+t_0^{-1})^{2m} \chi_*^{(4+a)s} M_*^2(M_*+1)^{4s -6}\Ncal_*\\
&\quad +C(1+t_0^{-1})^2\chi_*^{4(s+a)}M_*^2(M_*+1)^{4s-6}\Ncal_*.
\end{align*}
Hence, we obtain \eqref{kug3}.
\end{proof}

\begin{theorem}\label{high3}
If $U'\Subset U$ and $s>4$, then one has, for any $T_0\in (0,T)$,  that
 \beq \label{kug4}
  \int_{T_0}^{T}\int_{U'}\KK |\nabla u|^{s}dxdt
  \le C (1+T_0^{-1})^s \chi_*^{(4+a)(s+2)}M_*^2 (M_*+1)^{4s+2}\Ncal_*.
 \eeq 
\end{theorem}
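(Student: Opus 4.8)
The plan is to follow the template of the proof of Theorem \ref{high0}(c), but to replace the purely spatial bootstrap there (Lemma \ref{lem67}(i) and \eqref{kug0}) by the space-time bootstrap already established in Proposition \ref{high2}, and to replace the base estimate \eqref{ab11} by the time-interior estimate \eqref{ab24} for the $L^{s_*}_{x,t}$-norm with $s_*\in(2,4]$. Given $s>4$, I would first choose the unique integer $m\ge1$ for which $s_*\eqdef s-2m\in(2,4]$; this is possible because the interval $[(s-4)/2,(s-2)/2)$ has length one and lies in $(0,\infty)$. The single numerical fact that will be used repeatedly at the end is that $s_*>2$ forces the strict inequality $2m+2<s$. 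Then I would fix an auxiliary open set $V$ with $U'\Subset V\Subset U$ and take $t_0=T_0/2$, so that $0<t_0<T_0<T$ and $1+t_0^{-1}=1+2T_0^{-1}\le2(1+T_0^{-1})$.

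With these choices, Proposition \ref{high2} together with $(1+t_0^{-1})^{2m}\le C(1+T_0^{-1})^{2m}$ gives
\begin{align*}
\int_{T_0}^{T}\!\int_{U'}\KK|\nabla u|^{s}\,dxdt
&\le C(1+T_0^{-1})^{2m}\chi_*^{2(4+a)m}M_*^{2m}(M_*+1)^{6m}\int_{T_0/2}^{T}\!\int_{V}\KK|\nabla u|^{s_*}\,dxdt\\
&\quad+C(1+T_0^{-1})^{2m}\chi_*^{(4+a)s}M_*^2(M_*+1)^{4s-6}\Ncal_*.
\end{align*}
Since $s_*\in[2,4]$, I would estimate the base integral by \eqref{ab24}, applied with $U'$ replaced by $V$ and $T_0$ replaced by $T_0/2$; using $\chi_*\ge1$, $s_*\le4$, $(1+(T_0/2)^{-1})^{s_*-2}\le C(1+T_0^{-1})^2$, and the coarsening $M_*^{s_*-2}(M_*+1)^{3s_*-2}\le(M_*+1)^{4s_*-4}\le(M_*+1)^{12}$, this yields
\beqs
\int_{T_0/2}^{T}\!\int_{V}\KK|\nabla u|^{s_*}\,dxdt\le C(1+T_0^{-1})^{2}\chi_*^{4(4+a)}(M_*+1)^{12}\Ncal_*.
\eeqs

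Substituting this back, invoking $M_*^{2m}\le M_*^2(M_*+1)^{2m-2}$ (valid since $m\ge1$) and collecting like factors, the first term on the right-hand side is bounded by
\beqs
C(1+T_0^{-1})^{2m+2}\chi_*^{2(4+a)(m+2)}M_*^2(M_*+1)^{8m+10}\Ncal_*.
\eeqs
To match the claimed bound \eqref{kug4} it then suffices to verify, with $\chi_*,\,M_*+1\ge1$, the exponent inequalities $2m+2\le s$, $2(m+2)\le s+2$, $8m+10\le4s+2$ — all of which amount to the already-noted $2m+2\le s$ — and, for the second term above, $2m\le s$, $(4+a)s\le(4+a)(s+2)$, $4s-6\le4s+2$. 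This proves \eqref{kug4}.

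The genuinely difficult part, namely the $m$-fold iteration of the parabolic energy estimate, is already carried out inside Proposition \ref{high2}, so I do not expect a serious obstacle here. The two points that do require care are the uniform choice $t_0=T_0/2$ — so that only one factor $1+T_0^{-1}$ is lost per iteration step, and only $(1+T_0^{-1})^2$ in the base step, for a total power $2m+2\le s$ — and the strict inequality $s_*>2$, i.e.\ $2m+2<s$, which is what legitimizes all of the exponent comparisons above when combined with $\chi_*\ge1$.
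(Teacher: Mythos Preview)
Your argument is correct and follows essentially the same route as the paper's proof: decompose $s=s_*+2m$ with $s_*\in(2,4]$, set $t_0=T_0/2$, apply Proposition~\ref{high2}, and close with the base estimate on $U':=V$ at level $s_*$. The only cosmetic difference is that you invoke \eqref{ab24} directly for the base integral, whereas the paper uses Young's inequality together with \eqref{ab22}; both routes give the same bound $C(1+T_0^{-1})^2\chi_*^{4(4+a)}(M_*+1)^{12}\Ncal_*$, and your exponent bookkeeping ($2m+2<s$) matches the paper's.
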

\begin{proof}
There exist $2<s_*\le 4$ and integer $m\ge 1$ such that $s=s_*+2m$. Let $t_0:=T_0/2$, and $V$ be a set  with $U'\Subset V\Subset U$.  
Applying \eqref{kug3}, we have
\beq\label{kug3b}
\begin{aligned}
  \int_{T_0}^{T}\int_{U'}\KK |\nabla u|^{s}dxdt
&  \le C(1+T_0^{-1})^{2m}\chi_*^{2(4+a)m}M_*^{2m}(M_*+1)^{6m}  y_*\\
&\quad   +C(1+T_0^{-1})^{2m}\chi_*^{(4+a)s} M_*^2(M_*+1)^{4s-6}\Ncal_*.
\end{aligned}
   \eeq
where  
$\displaystyle y_*=\int_{T_0/2}^{T}\int_V\KK |\nabla u|^{s_*}dxdt$.
By Young's inequality and \eqref{ab22} applied to $U':=V$, we have 
\beqs y_*\le C\Big(T+\int_{T_0/2}^{T}\int_V\KK |\nabla u|^{4}dxdt\Big) 
\le  C(1+T_0^{-1})^2 \chi_*^{4(4+a)}(M_*+1)^{12}\Ncal_*. 
\eeqs 
Then
\begin{align*}
 C&(1+T_0^{-1})^{2m}\chi_*^{2(4+a)m}M_*^{2m}(M_*+1)^{6m}y_*
\le C(1+T_0^{-1})^{s-2}\chi_*^{(4+a)(s-2)}M_*^{2}(M_*+1)^{8m-2}y_*
\\
&\le C(1+T_0^{-1})^{s}\chi_*^{(4+a)(s+2)} M_*^2(M_*+1)^{4s+2}\Ncal_*.
\end{align*}
Combining this with \eqref{kug3b} gives \eqref{kug4}.
\end{proof}

\begin{corollary}\label{high1} Let $U'\Subset U$ and $s> 4-a$.
Then
 \beq\label{ih1}
  \intTUp |\nabla u|^{s}dxdt
    \le C \chi_*^{(4+a)(s+a+2)}(M_*+1)^{4(s+a+1/2)}\Ncal_{s+a-2}.
 \eeq 
Moreover, it holds, for any number $T_0\in(0,T)$, that
 \beq \label{ih2}
  \int_{T_0}^{T}\int_{U'}|\nabla u|^{s}dxdt
  \le C (1+T_0^{-1})^{s+a} \chi_*^{(4+a)(s+a+2)} (M_*+1)^{4(s+a+1)}\Ncal_*.
 \eeq 
 \end{corollary}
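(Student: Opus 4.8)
The plan is to obtain Corollary \ref{high1} as an immediate consequence of the $\KK$-weighted estimates in Theorem \ref{high0} and Theorem \ref{high3}, by trading the weight $\KK$ for an increase of the exponent by $a$. The crucial elementary fact is that the hypothesis $s>4-a$ is precisely equivalent to $s+a>4$, so that both of those theorems are applicable with exponent $s+a$ in place of $s$; moreover $s+a-2>2$, so $\Ncal_{s+a-2}$ is well defined and, by the monotonicity \eqref{NNNs}, dominates $\Ncal_*$.

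First I would apply inequality \eqref{kugs} with $w=u(\cdot,t)$ and $Q=\mathcal Z(\cdot,t)$, which gives, pointwise in $(x,t)$,
\beqs
|\nabla u|^s\le 3\,\KK\,|\nabla u|^{s+a}+\bigl(1+|\ZZ|^{s+a}\bigr).
\eeqs
Since $|\ZZ|\le\MZ u^2\le\chi_*^2 M_*^2$ by \eqref{Wbound} and \eqref{RMm}, the remainder term is bounded by $C\chi_*^{2(s+a)}(M_*+1)^{2(s+a)}$. Integrating over $U'\times(0,T)$, respectively over $U'\times(T_0,T)$, then yields
\beqs
\intTUp|\nabla u|^s\,dxdt\le 3\intTUp\KK|\nabla u|^{s+a}\,dxdt+C\chi_*^{2(s+a)}(M_*+1)^{2(s+a)}\,T
\eeqs
and the analogous inequality on $(T_0,T)$ with $T$ replaced by $T-T_0\le T$.

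Next I would estimate $\intTUp\KK|\nabla u|^{s+a}\,dxdt$ by Theorem \ref{high0} with exponent $s+a$, obtaining the bound $C\chi_*^{(4+a)(s+a+2)}M_*^2(M_*+1)^{4(s+a)}\Ncal_{s+a-2}$, and similarly estimate the time-localized integral by Theorem \ref{high3}, which produces the factor $(1+T_0^{-1})^{s+a}$ and the data term $\Ncal_*$. What remains is pure bookkeeping: the residual term $T$ is absorbed into $\Ncal_{s+a-2}$ (resp.\ into $\Ncal_*$) since $T$ is a summand of every $\Ncal_\bullet$; its coefficient $\chi_*^{2(s+a)}(M_*+1)^{2(s+a)}$ is dominated by $\chi_*^{(4+a)(s+a+2)}(M_*+1)^{4(s+a+1/2)}$ because $\chi_*\ge1$ and $s+a>4$; and the powers of $(M_*+1)$ coming from Theorems \ref{high0} and \ref{high3} are adjusted via $M_*^2(M_*+1)^{4(s+a)}\le(M_*+1)^{4(s+a+1/2)}$ and $M_*^2(M_*+1)^{4(s+a)+2}\le(M_*+1)^{4(s+a+1)}$. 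Collecting the exponents of $\chi_*$ and $(M_*+1)$ gives \eqref{ih1} and \eqref{ih2}. The argument presents no real obstacle beyond keeping the exponent arithmetic consistent; the one conceptual point worth stating is that $s>4-a$ is exactly the condition that places $s+a$ in the range already handled by the previous two results, which is why this corollary can be settled by direct substitution rather than by yet another iteration.
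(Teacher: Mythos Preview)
Your proposal is correct and follows essentially the same approach as the paper: use \eqref{kugs} to pass from $|\nabla u|^{s}$ to $\KK|\nabla u|^{s+a}$ plus a remainder controlled by $\chi_*^{2(s+a)}(M_*+1)^{2(s+a)}$, then invoke Theorem~\ref{high0} (i.e., \eqref{ih0}) and Theorem~\ref{high3} (i.e., \eqref{kug4}) with $s$ replaced by $s+a>4$, and finally absorb the $T$-term into $\Ncal_{s+a-2}$ or $\Ncal_*$ with the obvious exponent comparisons. The bookkeeping you outline matches the paper's, including the adjustments $M_*^2(M_*+1)^{4(s+a)}\le (M_*+1)^{4(s+a+1/2)}$ and $M_*^2(M_*+1)^{4(s+a)+2}\le (M_*+1)^{4(s+a+1)}$.
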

\begin{proof}
Using \eqref{kugs} and applying \eqref{ih0} with $s$ being substituted by $s+a$, we have
 \begin{align*}
   \intTUp|\nabla u|^{s}dxdt
   &\le C\intTUp (\KK|\nabla u|^{s+a} + (\chi_*(M_*+1))^{2(s+a)})dxdt\\
  &\le C \chi_*^{(4+a)(s+a+2)}M_*^2 (M_*+1)^{4(s+a)}\Ncal_{s+a-2} + CT\chi_*^{2(s+a)}(M_*+1)^{2(s+a)}.
 \end{align*}
 Note that $T\le  \Ncal_{s+a-2}$. Then \eqref{ih1} follows.
 Similarly, using \eqref{kug4}, instead of \eqref{ih0}, we obtain \eqref{ih2}.
\end{proof}

\section{Gradient estimates (III)}\label{maxintime}
This section is focused on the estimates for the $L_t^\infty L_x^s$-norms of $\nabla u$.
For $s\ge 2$, replacing $s$ in \eqref{irat0} with $s/2-1$ gives
\beq\label{iterate2}
\begin{aligned}
I
&\eqdef \phi \sup_{t\in[0,T]}\int_U |\nabla u(x,t)|^s \zeta^2(x,t) dx\\
&\le \phi \int_U |\nabla u_0(x)|^s \zeta^2(x,0) dx
 +C\chi_*^{2(3+a)}M_*^4\intTU \KK  |\nabla u|^{s-2}  \zeta^2 dx dt \\
&\quad + C \chi_*^{2(3+a)}(M_*+1)^2\intTU \KK   |\nabla u|^s (\zeta^2 + |\nabla \zeta|^2)   dx dt + C\intTU |\nabla u|^s \zeta|\zeta_t| dx dt.
\end{aligned}
\eeq

\begin{theorem}\label{high4}
If $U'\Subset U$, then one has, for all $t\in[0,T]$, that
\begin{align}\label{pwtall}
 \phi \int_{U'} |\nabla u(x,t)|^s  dx
&\le \phi \int_U |\nabla u_0(x)|^s dx \notag \\
&\quad +C\begin{cases}
  \chi_*^{4(4+a)}(M_*+1)^{6}\Ncal_0 &\text{ if }s=2,\\ 
  \chi_*^{(s+2)(4+a)}M_*^{s-2}(M_*+1)^{3s+2}\Ncal_2 &\text{ if }2<s\le 4,\\
  \chi_*^{(s+4)(4+a)}M_*^2 (M_*+1)^{4(s+1)}\Ncal_{s-2} &\text{ if }s>4.
         \end{cases}
\end{align}
\end{theorem}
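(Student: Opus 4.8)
The starting point is the \emph{a priori} inequality \eqref{iterate2}, which already incorporates both the energy estimate of Lemma~\ref{lem61} and the Lady\v{z}enskaja-Ural{\cprime}ceva embedding of Theorem~\ref{LUembed}; what remains is to choose the cut-off function well and to feed in the space-time $\KK$-weighted bounds established earlier, the precise choice of which depends on the range of $s$. Since $\widetilde M\le M_*$ is already built into \eqref{iterate2}, we only work with $M_*$.

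The plan is as follows. Fix an intermediate set $V$ with $U'\Subset V\Subset U$ and pick $\zeta=\zeta(x)\in C_c^2(U)$ with $0\le\zeta\le 1$, $\zeta\equiv 1$ on $U'$, $\supp\zeta\subset V$, and $|\nabla\zeta|\le C$. Because $\zeta$ does not depend on $t$, we have $\zeta_t\equiv 0$, so the last integral in \eqref{iterate2} vanishes and $\int_U|\nabla u_0|^s\zeta^2(x,0)\,dx\le\int_U|\nabla u_0|^s\,dx$. Using $\KK\le 1$, the boundedness of $\zeta^2+|\nabla\zeta|^2$ with support in $V$, and \eqref{ee5} in the form $|\nabla u|^{s-2}\le 1+|\nabla u|^s$ (valid since $s\ge 2$), the middle two terms of \eqref{iterate2} are bounded by $C\chi_*^{2(3+a)}M_*^4 T+C\chi_*^{2(3+a)}\big(M_*^4+(M_*+1)^2\big)\intTV\KK|\nabla u|^s\,dxdt$. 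As $M_*^4+(M_*+1)^2\le C(M_*+1)^4$, this yields
\beqs
I\le \phi\int_U|\nabla u_0|^s\,dx + C\chi_*^{2(3+a)}M_*^4 T + C\chi_*^{2(3+a)}(M_*+1)^4\intTV\KK|\nabla u|^s\,dxdt.
\eeqs

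Next I would bound the remaining integral by the appropriate previously proved estimate: \eqref{gradu4} (over all of $U$) when $s=2$; \eqref{ab23} applied with $U':=V$ when $2<s\le 4$; and \eqref{ih0} applied with $U':=V$ when $s>4$. Substituting and collecting exponents completes the proof: using $\chi_*\ge 1$, the coefficient $\chi_*^{2(3+a)}$ is absorbed into $\chi_*^{4(4+a)}$, $\chi_*^{(s+2)(4+a)}$, $\chi_*^{(s+4)(4+a)}$ respectively; the extra factor $(M_*+1)^4$ is absorbed into $(M_*+1)^6$, $(M_*+1)^{3s+2}$, $(M_*+1)^{4(s+1)}$ respectively; and the stray term $C\chi_*^{2(3+a)}M_*^4 T$ is absorbed into the corresponding $\Ncal$-term, using $M_*^2 T\le\Ncal_0$ for $s=2$, and $T\le\Ncal_2$, $T\le\Ncal_{s-2}$ together with $M_*^4\le M_*^{s-2}(M_*+1)^{6-s}$ (for $2<s\le 4$) and $M_*^4\le M_*^2(M_*+1)^2$ (for $s>4$). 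Finally, since $\zeta\equiv 1$ on $U'$, we have $\phi\int_{U'}|\nabla u(x,t)|^s\,dx\le I$ for every $t\in[0,T]$, which is exactly \eqref{pwtall}.

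There is no genuine analytic obstacle: the substantive work is already contained in \eqref{iterate2} and in the interior $L^s_{x,t}$-estimates of Sections~\ref{L2asub} and \ref{gradsec}. The only delicate point is the exponent bookkeeping across the three cases $s=2$, $2<s\le 4$, $s>4$—in particular checking that the $T$-term is dominated by the advertised $\Ncal$-term with precisely the stated powers of $\chi_*$ and $M_*+1$. I expect this to be the most error-prone, but entirely routine, part of the argument.
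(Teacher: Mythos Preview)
Your proposal is correct and follows essentially the same route as the paper: choose a time-independent cut-off $\zeta(x)$ so the $\zeta_t$-term in \eqref{iterate2} drops out, absorb $\KK|\nabla u|^{s-2}\le 1+\KK|\nabla u|^s$ to reach an inequality of the form \eqref{start7}, and then feed in \eqref{gradu4}, \eqref{ab23}, or \eqref{ih0} according to whether $s=2$, $2<s\le4$, or $s>4$. Your introduction of the intermediate set $V$ with $U'\Subset V\Subset U$ is a mild notational clarification (the paper leaves implicit that $\zeta^2+|\nabla\zeta|^2$ has compact support in $U$), and your exponent bookkeeping for the $\chi_*$, $(M_*+1)$, and $T$-terms is accurate.
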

\begin{proof}
Denote $\displaystyle J=\phi \sup_{t\in[0,T]}\int_{U'} |\nabla u(x,t)|^s  dx.$
Choose $\zeta$ to be the same function $\zeta(x)$ as in the proof of Theorem \ref{thm64}(i).
Then we have the relation
\beq\label{JI}
J\le I.
\eeq
We then bound $I$ by using inequality \eqref{iterate2}, noticing that the last integral of this inequality vanishes,
and the integrand of the second term on its right-hand side can be bounded by
\begin{align*}
\KK  |\nabla u|^{s-2} \le \KK  (1 + |\nabla u|^s)\le 1+\KK |\nabla u|^s .
\end{align*}
After this, combining the two constants for the integrals involving $\KK |\nabla u|^{s} $, we obtain
\beq\label{start7} 
\begin{aligned}
J&\le \phi \int_U |\nabla u_0(x)|^s dx + C\chi_*^{2(3+a)}M_*^4 T\\
&\quad +C\chi_*^{2(3+a)}(M_*+1)^4 \intTU \KK |\nabla u|^s (\zeta^2 + |\nabla \zeta|^2)   dx dt.
\end{aligned}
\eeq 

Consider $s=2$. Using \eqref{gradu4} to estimate the last integral in \eqref{start7}, we obtain
\begin{align*}
J&\le \phi \int_U |\nabla u_0(x)|^s dx + C\chi_*^{2(3+a)}M_*^4 T\\
&\quad +C\chi_*^{2(3+a)}(M_*+1)^4 \cdot \chi_*^{2(4+a)}(M_*+1)^2 \Ncal_0.
\end{align*}
Making a generous bound $2(3+a)<2(4+a)$ for the first two exponents of $\chi_*$ above, we obtain the first estimate in \eqref{pwtall}.  

Consider $2< s\le 4$. Using \eqref{ab23} to estimate the last integral in \eqref{start7}, we obtain
\begin{align*}
J&\le \phi \int_U |\nabla u_0(x)|^s dx + C\chi_*^{2(3+a)}M_*^4 T\\
&\quad +C\chi_*^{2(3+a)}(M_*+1)^4 \cdot \chi_*^{(4+a)s}M_*^{s-2}(M_*+1)^{3s-2}\Ncal_2.
\end{align*}
Then the second  estimate in \eqref{pwtall} follows.

Consider $s> 4$. Using \eqref{ih0} to estimate the last integral in \eqref{start7}, we have
\begin{align*}
J&\le \phi \int_U |\nabla u_0(x)|^s dx + C\chi_*^{2(3+a)}M_*^4 T\\
&\quad +C\chi_*^{2(3+a)}(M_*+1)^4 \cdot \chi_*^{(4+a)( s+2)}M_*^2 (M_*+1)^{4s}
\Ncal_{s-2}.
\end{align*}
With simple manipulations, we obtain from this the third estimate in \eqref{pwtall}.
\end{proof}

\begin{theorem}\label{high5}
Let $U'\Subset U$ and $T_0\in(0,T)$. Then it holds, for all $t\in[T_0,T]$, that
\begin{align}\label{pwtnew}
&\phi\int_{U'} |\nabla u(x,t)|^{s}dx
\le C \notag \\
& \cdot
\begin{cases}
\chi_*^{(4+a)^2}(1+T_0^{-1})^{1+a}(M_*+1)^{2(3+a)}\big\{ M_*^{a} (M_*+1)^{2+a} \Ncal_*+\Ncal_0\big\}
&\text{ if }s=2,\\ 
\chi_*^{(4+a)(s+a+2)} (1+T_0^{-1})^{s+a-1} M_*^{s-2} (M_*+1)^{3s+4a+2} \Ncal_*
&\text{ if }2< s\le 4-a,\\
\chi_*^{(4+a)(s+a+4)} (1+T_0^{-1})^{s+a+1} M_*^{s-2} (M_*+1)^{3s+4a+10} \Ncal_*
&\text{ if }4-a<s\le 4,\\
\chi_*^{(4+a)(s+a+4)}(1+T_0^{-1})^{s+a+1}M_*^2 (M_*+1)^{4s+4a+6} \Ncal_*
&\text{ if }s>4.
\end{cases}
\end{align}

Consequently, one has, for all $s\ge 2$ and $t\in[T_0,T]$, that
 \beq\label{pwt6}
\phi\int_{U'} |\nabla u(x,t)|^{s}dx
\le  C  \chi_*^{(4+a)(s+a+4)}(1+T_0^{-1})^{s+a+1}(M_*+1)^{4(s+a+2)} \Ncal_*.
\eeq
\end{theorem}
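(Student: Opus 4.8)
The plan is to start from inequality \eqref{iterate2}, which after replacing $s$ by $s/2-1$ expresses $I=\phi\sup_t\int_U|\nabla u|^s\zeta^2\,dx$ in terms of a spatial-temporal integral of $\KK|\nabla u|^{s-2}$, a spatial-temporal integral of $\KK|\nabla u|^s(\zeta^2+|\nabla\zeta|^2)$, and a $|\zeta_t|$-term. The key idea is to pick the cut-off function $\zeta=\zeta(x,t)$ as in the proof of Theorem \ref{high3} (and Proposition \ref{high2}): vanishing for $0\le t\le T_0/2$, identically $1$ on $U'\times[T_0,T]$, with $|\nabla\zeta|\le C$ and $0\le\zeta_t\le CT_0^{-1}$. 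With this choice the term $\phi\int_U|\nabla u_0|^s\zeta^2(x,0)\,dx$ drops out because $\zeta(\cdot,0)\equiv0$, which is exactly why we get the regularization effect (only $M_*$, not $\nabla u_0$, on the right-hand side).

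First I would bound the second integral on the right of \eqref{iterate2} by writing $\KK|\nabla u|^{s-2}\le 1+\KK|\nabla u|^s$, so that $\chi_*^{2(3+a)}M_*^4\intTU \KK|\nabla u|^{s-2}\zeta^2\,dxdt\le C\chi_*^{2(3+a)}M_*^4(T+\intTU\KK|\nabla u|^s\zeta^2\,dxdt)$, and then absorb $\intTU\KK|\nabla u|^s\zeta^2$ into the third integral. Next I would estimate the $|\zeta_t|$-term $\intTU|\nabla u|^s\zeta|\zeta_t|\,dxdt$: using $|\zeta_t|\le CT_0^{-1}$ and $\zeta|\zeta_t|\le C T_0^{-1}\zeta^{?}$, and then controlling $|\nabla u|^s$ (no weight) by $\KK|\nabla u|^{s}\cdot\KK^{-1}$ and the pointwise bound $\KK^{-1}\le C(1+|\nabla u|+M_*^2\chi_*^2)^a\le C(M_*+1)^{2a}\chi_*^{2a}(1+|\nabla u|^a)$, and then $|\nabla u|^{s+a}\le 1+|\nabla u|^{s+2}$ (for $s\ge 2$, with $a<2$) — the upshot is that both the $\KK|\nabla u|^s$-term and the $|\zeta_t|$-term reduce to a constant times $(1+T_0^{-1})\,\chi_*^{O(1)}(M_*+1)^{O(1)}\intTV\KK|\nabla u|^{s}\,dxdt$ over a slightly larger domain $V\Supset U'$, plus a harmless $T$-term. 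Then the whole problem collapses to estimating $\int_{T_0/2}^T\int_V\KK|\nabla u|^s\,dxdt$, for which I invoke the results of Section \ref{gradsec}: inequality \eqref{gradu6a} for $s=2$, \eqref{ab24} for $2<s\le4$, and \eqref{ih2} / \eqref{kug4} for $s>4$ — with the intermediate range $4-a<s\le 4$ treated using \eqref{ih2} (which applies for $s>4-a$) to get the slightly worse exponents listed there. The four cases in \eqref{pwtnew} come directly from substituting these four estimates and collecting powers of $\chi_*$, $M_*$, $1+T_0^{-1}$, and the data quantity $\Ncal_*$ (or $\Ncal_0$ when $s=2$, since \eqref{gradu4}–\eqref{gradu6a} give $\Ncal_0$-type bounds).

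Finally, to get the uniform estimate \eqref{pwt6} valid for all $s\ge 2$, I would take the four case-specific bounds in \eqref{pwtnew} and bound each of them above by the single expression $C\chi_*^{(4+a)(s+a+4)}(1+T_0^{-1})^{s+a+1}(M_*+1)^{4(s+a+2)}\Ncal_*$; this requires checking, in each of the four ranges of $s$, that the exponents of $\chi_*$, of $1+T_0^{-1}$, and of $M_*+1$ in \eqref{pwtnew} are all dominated by the corresponding exponents in \eqref{pwt6} (using $M_*^{s-2}\le(M_*+1)^{s-2}$, $M_*^a\le (M_*+1)^a$, etc.), and that $\Ncal_0\le(M_*+1)^2\Ncal_*$ from \eqref{NNN0} handles the $s=2$ case. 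These are routine monotonicity-of-exponents comparisons of the kind already performed throughout Sections \ref{gradsec}.

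The main obstacle I anticipate is the bookkeeping in the $|\zeta_t|$-term: controlling the unweighted $|\nabla u|^s$ against the weighted $\KK|\nabla u|^s$ forces the $\KK^{-1}\le C(M_*+1)^{2a}\chi_*^{2a}(1+|\nabla u|^a)$ trick, and the resulting extra power $|\nabla u|^{s+a}$ must then be folded back — for $s\ge 2$ one has $|\nabla u|^{s+a}\le 1+|\nabla u|^{s+2}$ since $a<2$, but then one is back to needing an $L^{s+2}$-type bound, so it is cleaner instead to estimate $|\nabla u|^{s+a}\le|\nabla u|^s+|\nabla u|^{s+?}$ carefully or simply to absorb via Young's inequality with a small $\varepsilon$ back into $I$ itself (as done for the $\gamma$-term in Lemma \ref{lem62}). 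Getting this absorption step right, and tracking the precise powers of $(M_*+1)$ and $\chi_*$ it costs, is the delicate part; everything else is a substitution of already-established estimates and exponent arithmetic.
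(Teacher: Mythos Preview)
Your overall strategy matches the paper's: start from \eqref{iterate2} with a cut-off $\zeta(x,t)$ vanishing for $t\le T_0/2$ and equal to $1$ on $U'\times[T_0,T]$, so that the initial-data term disappears, and then reduce everything to space-time integrals already controlled in Section~\ref{gradsec}. However, your handling of the $|\zeta_t|$-term is more convoluted than necessary, and the absorption idea you flag as ``delicate'' would in fact fail: $I$ is a \emph{supremum in time}, not a space-time integral, so you cannot absorb a space-time integral back into it as was done for $\gamma$ in Lemma~\ref{lem62}.

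The paper sidesteps this entirely. After using $\KK|\nabla u|^{s-2}\le 1+|\nabla u|^s$ and $\KK|\nabla u|^s\le |\nabla u|^s$ (note: the paper drops the weight, not keeps it), all three integrals on the right of \eqref{iterate2} collapse into a single \emph{unweighted} integral, giving
\[
J\le C\chi_*^{2(3+a)}M_*^4 T + C\chi_*^{2(3+a)}(M_*+1)^4(1+T_0^{-1})\int_{T_0/2}^T\!\!\int_V |\nabla u|^s\,dx\,dt.
\]
Only \emph{then} is the $\KK^{-1}$ trick applied: write $|\nabla u|^s=\KK|\nabla u|^s\KK^{-1}$ and expand $\KK^{-1}\le C(1+|\nabla u|^a+\chi_*^{2a}(M_*+1)^{2a})$, splitting the integral into
\[
I_1=\int_{T_0/2}^T\!\!\int_V \KK|\nabla u|^{s+a}\,dx\,dt
\quad\text{and}\quad
I_2=\chi_*^{2a}(M_*+1)^{2a}\int_{T_0/2}^T\!\!\int_V \KK|\nabla u|^{s}\,dx\,dt.
\]
These are then estimated \emph{separately}, with no folding back or absorption: $I_2$ by \eqref{gradu4}, \eqref{ab24}, or \eqref{kug4} according to whether $s=2$, $2<s\le 4$, or $s>4$; and $I_1$ by \eqref{ab24} or \eqref{kug4} according to whether $s+a\le 4$ or $s+a>4$. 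This is precisely why the case boundary sits at $s=4-a$ rather than $s=4$: it is the threshold at which the exponent $s+a$ in $I_1$ crosses $4$. Your plan to reduce everything to a single $\int\KK|\nabla u|^s$ misses this two-integral structure and is the source of the difficulty you anticipated. Once you carry both $I_1$ and $I_2$ forward, the remainder really is just exponent bookkeeping, and your treatment of \eqref{pwt6} via \eqref{NNN0} is correct.
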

\begin{proof}
Choose $\zeta(x,t)$ to be the cut-off function in the proof of Theorem \ref{thm64}(ii) which satisfies additionally that 
$\zeta$ has compact support in $V\times [T_0/2,T]$, where $U'\Subset V\Subset U$.

Let $J$ be the same as in Theorem \ref{high4}. Again, we have \eqref{JI}, and  use \eqref{iterate2} to estimate $I$. 
Note, on the right-hand side of \eqref{iterate2}, that 
\beqs
\KK  |\nabla u|^{s-2}\le 1+|\nabla u|^s,\quad  \KK  |\nabla u|^s\le  |\nabla u|^s. 
\eeqs
Utilizing these properties as well as \eqref{zprop}, we have from \eqref{JI} and \eqref{iterate2} that
\beq\label{JJ}
J\le C\chi_*^{2(3+a)}M_*^4 T
 + C \chi_*^{2(3+a)}(M_*+1)^4(1+T_0^{-1}) \int_{T_0/2}^T\int_V  |\nabla u|^s  dx dt.
\eeq
Estimate the last integral in \eqref{JJ},
\begin{align*}
&\int_{T_0/2}^{T}\int_V |\nabla u|^s dx dt
 = \int_{T_0/2}^{T}\int_V \KK|\nabla u|^s \KK^{-1}dx dt\\
  &\le C\int_{T_0/2}^{T}\int_V \KK(|\nabla u|^{s+a}+|\nabla u|^s\chi_*^{2a}(M_*+1)^{2a})dxdt\\
 &= C\int_{T_0/2}^{T}\int_V \KK|\nabla u|^{s+a}dxdt+C\chi_*^{2a}(M_*+1)^{2a} \int_{T_0/2}^{T}\int_V \KK|\nabla u|^s dxdt.
\end{align*}

Denote by $I_1$ and $I_2$ the last two double integrals. We estimate them, in calculations below, by using inequalities \eqref{ab24} and \eqref{kug4} with $T_0:=T_0/2$ and $U':=V$.

Case $s=2$. Applying \eqref{ab24} to $s:=2+a\in(2,4)$ to bound $I_1$, and applying \eqref{gradu4} to $s:=2$ to bound $I_2$ give
\begin{align*}
J
&\le C\chi_*^{2(3+a)}M_*^4 T
  + C \chi_*^{2(3+a)}(M_*+1)^4(1+T_0^{-1})\\
  &\quad \cdot\Big\{  (1+T_0^{-1})^a \chi_*^{(4+a)(2+a)}M_*^a(M_*+1)^{4+3a}\Ncal_*  + \chi_*^{2a}(M_*+1)^{2a} \cdot \chi_*^{2(4+a)}(M_*+1)^2 \Ncal_0\Big\}\\
&\le C\chi_*^{2(3+a)}M_*^4 T
  + C \chi_*^{(4+a)^2}(1+T_0^{-1})^{1+a}M_*^{a} (M_*+1)^{8+3a} \Ncal_*\\
&\quad    + C \chi_*^{14+6a}(1+T_0^{-1})(M_*+1)^{6+2a} \Ncal_0.
\end{align*}
We obtain the first estimate in \eqref{pwtnew}.

Case $2<s\le 4-a$. Estimating $I_1$ by \eqref{ab24} applied to $s:=s+a$, and estimating $I_2$ by \eqref{ab24}, we have 
\begin{align*}
J&\le C\chi_*^{2(3+a)}M_*^4 T
  + C \chi_*^{2(3+a)}(M_*+1)^4(1+T_0^{-1})\\
  &\quad \cdot\Big\{  (1+T_0^{-1})^{s+a-2}\chi_*^{(4+a)(s+a)}M_*^{s+a-2}(M_*+1)^{3(s+a)-2}\Ncal_* \\
&\quad + \chi_*^{2a}(M_*+1)^{2a} \cdot (1+T_0^{-1})^{s-2}\chi_*^{(4+a)s}M_*^{s-2}(M_*+1)^{3s-2}\Ncal_*\Big\}\\
&\le C\chi_*^{2(3+a)}M_*^4 T
  + C \chi_*^{(4+a)(s+a+2)}(1+T_0^{-1})^{s+a-1} M_*^{s-2} (M_*+1)^{3s+4a+2}\Ncal_*.
\end{align*}
We obtain the second estimate in \eqref{pwtnew}.
  
Case $4-a<s\le 4$. Estimating $I_1$ by \eqref{kug4} applied to $s:=s+a$, and estimating $I_2$ by  \eqref{ab24} yield
\begin{align*}
J&\le C\chi_*^{2(3+a)}M_*^4 T
  + C \chi_*^{2(3+a)}(M_*+1)^4(1+T_0^{-1})\\
  &\quad \cdot\Big\{  (1+T_0^{-1})^{s+a} \chi_*^{(4+a)(s+a+2)}M_*^2 (M_*+1)^{4(s+a)+2}\Ncal_* \\
&\quad + \chi_*^{2a}(M_*+1)^{2a} \cdot (1+T_0^{-1})^{s-2}\chi_*^{(4+a)s}M_*^{s-2}(M_*+1)^{3s-2}\Ncal_*\Big\}\\
&\le C\chi_*^{2(3+a)}M_*^4 T
  + C \chi_*^{(4+a)(s+a+4)}(1+T_0^{-1})^{s+a+1}M_*^{s-2} (M_*+1)^{3s+4a+10} \Ncal_*.
\end{align*}
We obtain the third estimate in \eqref{pwtnew}.

Case $s>4$. Estimating $I_1$ by \eqref{kug4} for $s:=s+a$, and estimating $I_2$ by \eqref{kug4} result in 
\begin{align*}
J&\le C\chi_*^{2(3+a)}M_*^4 T
  + C \chi_*^{2(3+a)}(M_*+1)^4(1+T_0^{-1})\\
  &\quad \cdot\Big\{  (1+T_0^{-1})^{s+a} \chi_*^{(4+a)(s+a+2)}M_*^2 (M_*+1)^{4(s+a)+2}\Ncal_* \\
&\quad + \chi_*^{2a}(M_*+1)^{2a} \cdot (1+T_0^{-1})^s \chi_*^{(4+a)(s+2)}M_*^2 (M_*+1)^{4s+2}\Ncal_*\Big\}\\
&\le C\chi_*^{2(3+a)}M_*^4 T
  + C \chi_*^{(4+a)(s+a+4)}(1+T_0^{-1})^{s+a+1}M_*^2 (M_*+1)^{4s+4a+6} \Ncal_*.
\end{align*}
We obtain the fourth estimate in \eqref{pwtnew}.

Finally, one can easily unify the estimates in \eqref{pwtnew} for all $s>2$ with \eqref{pwt6}. This can also be done for the case $s=2$ by comparing $\Ncal_0$ with $\Ncal_*$ using the last relation in \eqref{NNN0}.
\end{proof}

\begin{remark}\label{smlrmk2}
Similar to Remark \ref{smlrmk1}, when $u$, $u_0$, $\bar u_0$ are small in necessary norms, and $\Ecal_*$ is small, then $M_*$ and $\Ncal_0$ are small, which make the the right-hand sides of \eqref{pwtall} and \eqref{pwtnew} to be small.
\end{remark}

\bigskip
\noindent\textbf{\large Acknowledgments.} The authors would like to thank Dat Cao, Akif Ibragimov and Tuoc Phan for very helpful discussions.

\bigskip
\noindent\textbf{\large Data availability.} 
The data that support the findings of this study are available from the corresponding author
upon reasonable request. The data that supports the findings of this study are available within the article.


\def\cprime{$'$}

\end{document}